\documentclass[preprint,12pt]{elsarticle}
\usepackage{xy}
\usepackage{amssymb}
\xyoption{all }
\usepackage{amsthm}
\usepackage{graphicx}
\usepackage{lineno}

\def\mc#1{\mathcal {#1}}
\def\C{\mc C}
\def\D{\mc D}
\def\S{\mc S}
\def\X{\mc X}
\def\Y{\mc Y}
\def\P{\mc P}
\def\Q{\mc Q}
\def\T{\mc T}

\def\I{\mc I}
\def\hs#1{\hskip#1mm}

\newtheorem{lemma}{Lemma}
\newtheorem{corollary}{Corollary}
\newtheorem{theorem}{Theorem}
\newtheorem{proposition}{Proposition}
\newtheorem{example}{Example}
\newtheorem{definition}{Definition}

\newtheorem{remark}{Remark}
\newtheorem{open problem}{Open Problem}

 %

\begin{document}
\begin{frontmatter}
\title{Fraction, Restriction, and Range Categories\\ From Non-Monic Classes of Morphisms
 \tnoteref{ANW}}
\tnotetext[ANW]{\scriptsize Much of the work presented in this paper was done while the second author visited York University in 2018, with the partial financial assistance from the third author's NSERC Discovery Grant (no. 050126), which is gratefully acknowledged. The first two authors acknowledge also the partial support by Mahani Mathematical Research Center.}
\author[1]{S.N. Hosseini}
\ead{nhoseini@uk.ac.ir}%
\author[1]{A.R. Shir Ali Nasab}
\ead{ashirali@math.uk.ac.ir}
\author[2]{W. Tholen\corref{W}}
\ead{tholen@mathstat.yorku.ca}
\address[1]{Mahani Mathematical Research Center\\ Shahid Bahonar University of Kerman, Kerman, Iran}
\address[2]{Department of Mathematics and Statistics\\ York University, Toronto, Canada}
\cortext[W]{\scriptsize Corresponding author}

 \begin{abstract}
{\footnotesize  For a composition-closed and pullback-stable class $\mathcal S$ of morphisms in a category $\mathcal C$ containing all isomorphisms, we form the category {\sf Span}$(\mathcal C,\mathcal S)$
of $\mathcal S$-spans $(s,f)$ in $\mathcal C$ with first ``leg" $s$ lying in $\mathcal S$, and give an alternative construction of its quotient category $\mathcal C[\mathcal S^{-1}]$ of $\mathcal S$-fractions. Instead of trying to turn $\mathcal S$-morphisms ``directly" into isomorphisms, we turn them separately into retractions and into sections in a universal manner, thus obtaining the quotient categories {\sf Retr}$(\mathcal C,\mathcal S)$ and {\sf Sect}$(\mathcal C,\mathcal S)$. The fraction category $\mathcal C[{\mathcal S}^{-1}]$ is their largest joint quotient category. 

Without confining $\mathcal S$ to be a class of monomorphisms of $\mathcal C$, we show that {\sf Sect}$(\mathcal C,\mathcal S)$ admits a quotient category, {\sf Par}$(\mathcal C,\mathcal S)$, whose name is justified by two facts. On one hand, for $\mathcal S$ a class of monomorphisms in $\mathcal C$, it returns the category of $\mathcal S$-spans in $\mathcal C$, also called $\mathcal S$-partial maps in this case; on the other hand, we prove
that {\sf Par}$(\mathcal C,\mathcal S)$ is a split restriction category (in the sense of Cockett and Lack).  A further quotient construction produces even a range category (in the sense of Cockett, Guo and Hofstra), {\sf RaPar}$(\mathcal C,\mathcal S)$, which is still large enough to admit $\mathcal C[\mathcal S^{-1}]$ as its quotient. 

Both, {\sf Par} and {\sf RaPar}, are the left adjoints of global 2-adjunctions. When restricting these to their ``fixed objects", one obtains precisely the 2-equivalences by which their name givers characterized restriction and range categories. Hence, both {\sf Par}$(\mathcal C,\mathcal S)$ and {\sf RaPar}$(\mathcal C,\mathcal S)$ may be naturally presented as {\sf Par}$(\mathcal D,\mathcal T)$ and {\sf RaPar}$(\mathcal D,\mathcal T)$, respectively, where now $\mathcal T$ is a class of monomorphisms in $\mathcal D$. In summary, while there is no {\em a priori} need for the exclusive consideration of classes of monomorphisms, one may resort to them naturally.}
\end{abstract}

{\tiny 
 \begin{keyword}
{ span category \sep partial map category \sep category of fractions \sep restriction category \sep range category.}
\MSC 18A99 \sep 18B99 \sep 18A32.
\end{keyword}}
\end{frontmatter}


 \section{Introduction}
The formation of the {\em category} $\;\C[\S^{-1}]$ {\em of fractions} with respect to a sufficiently well-behaved class $\S$ of morphisms in $\C$, as first given in \cite{gabzis}, is a fundamental device in homotopy theory.
The construction is characterized by its {\em localizing functor} $\C\to\C[\S^{-1}]$ which is universal with respect to the property of turning morphisms in $\S$ into isomorphisms. An existence proof for $\C[\S^{-1}]$ is only sketched by Gabriel and Zisman \cite{gabzis} (see their Lemma 1.2 on p. 7 whose {\em ``proof is left to the reader"}); for more elaborate proofs, see \cite{schubert} and \cite{bor}. A particular and delicate point is the question of the size of the ``homs" of $\C[\S^{-1}]$, as these may be large even when those of $\C$ are all small.

 Assuming $\S$ {\em to contain all isomorphisms and to be closed under composition and stable under pullback in} $\C$ throughout this paper and, thus, departing from the original array of applications for the construction, we take a stepwise approach to the formation of $\C[\S^{-1}]$. Hence, we consider separately the two processes of transforming every morphism in $\S$ into a retraction and into a section, before amalgamating them to obtain the category of fractions. Not surprisingly, when $\S$ happens to be a class of monomorphisms in $\C$, the transformation of $\S$-morphisms into retractions essentially suffices to reach $\C[\S^{-1}]$, simply because the transformation of $\S$-morphisms into sections comes almost for free when $\S$ is a class of monomorphisms: one just considers the $\S$-{\em span category} $\sf{Span}(\C,\S)$ whose morphisms $(s,f):A\to B$ are (isomorphism classes of) spans
	$\xymatrix{A & D\ar[l]_{s}\ar[r]^{f} & B}$
of morphisms in $\C$ with $s\in\S$; composition with $(t,g):B\to C$ proceeds as usual, via pullback:
\begin{center}
	$\xymatrix{&& P\ar[ld]_{t'}\ar[rd]^{f'} && \\
	& D\ar[ld]_{s}\ar[rd]^{f}\ar@{}[rr]|{\rm{pb}} & & E\ar[ld]_{t}\ar[rd]^{g} & \\
A && B && C}$
\end{center}
Trivially now, the functor $\C\to \sf{Span}(\C,\S),\,f\mapsto (1,f),$ turns $\S$-morphisms into sections, since monomorphisms have trivial kernel pairs (in the diagram above, for $s=g=1$ and $f=t\in\S$ one can take $t'=f'=1$).

 In the general case, {\em without} confining $\S$ to be a class of monomorphisms, as a first step we will still form the category $\sf{Span}(\C,\S)$ as above. Then, transforming $\S$-morphisms into retractions in a universal manner is fairly easy; however,  trying to transform them into
sections without the assumption that they are monomorphisms is considerably more complicated. 
The latter problem leads us to the formation of the $\S$-{\em sectionable span category} of $\C$, {\sf Sect}$(\C,\S)$, 
while the former problem makes us form the $\S$-{\em retractable span category} of $\C$, $\sf{Retr}(\C,\S)$ (see Sections 3 and 2, respectively). In Section 4 we see how to amalgamate the two constructions to obtain the category $\C[\S^{-1}]$. Although, in order to keep the paper widely accessible, we perform and characterize these constructions strictly at the ordinary-category level, in an epilogue (Section 10) for readers familiar with bicategories, we briefly allude to the 2-categorical structure of $\sf{Span}(\C,\S)$ (see \cite{Benabou, Hermida}) 
and indicate how our constructions of $\sf{Retr}(\C,\S)$ and {\sf Sect}$(\C,\S)$ are naturally motivated by that context.

 Whilst the fact that $\C\to{\sf Sect}(\C,\S)$ is universal with respect to turning $\S$-morphisms into sections makes the category a candidate for serving as an $\S$-partial map category, 
without the mono constraint on $\S$ we have not been able to determine whether it is a {\em restriction category}, {\em i.e.}, whether it has the property identified by Cockett and Lack \cite{colackrest} as characteristic for $\S$-partial map categories in the case that $\S$ is a class of monomorphisms.
That is why, in Section 5, we elaborate on how to obtain, as a quotient category of {\sf Sect}$(\C,\S)$, the $\S$-{\em partial map category} ${\sf Par}(\C,\S)$, which is a restriction category. 
 Under a fairly mild additional hypothesis on $\S$, which holds in particular under the {\em weak left cancellation condition} $(s,\, s\cdot t\in\S\Longrightarrow t\in \S)$, ${\sf Par}(\C,\S)$ is a localization of ${\sf Sect}(\C,S)$ and makes ${\sf Retr}(\C,\S)=\C[\S^{-1}]$ its quotient category.

 Of course, this additional condition holds {\em a fortiori} when $\S$ belongs to a {\em relatively stable} orthogonal factorization system $(\mathcal P,\S)$ of $\C$, so that $\P$ is stable under pullback along $\S$-morphisms. In that case we can form, as a further localization of $\sf{Sect}(\C,\S)$, a {\em range category} in the sense of \cite{cgh}. Range categories not only have a restriction structure on the domains of morphisms, but also a kind of dually behaved structure on their codomains. Hence, in Section 8 we present the construction of the $\S$-{\em partial map range category}, ${\sf RaPar}(\C,S)$, thus completing the quotient constructions given in this paper.

 In summary, for $\S$ satisfying the general hypotheses one has the commutative diagram
$$\xymatrix{\C\ar[r] & {\sf Span}(\C,\S)\ar[r]\ar[d] & {\sf Sect}(\C,\S)\ar[d]\ar[r] & {\sf Par}(\C,\S) \\
			 & {\sf Retr}(\C,\S)\ar[r] & \C[\S^{-1}]}$$
			which flattens to				
$$C\to{\sf Span}(\C,\S)\to{\sf Sect}(\C,\S)\to{\sf Par}(\C,\S)\to{\sf Retr}(\C,\S)=\C[\S^{-1}]$$
when $\S$ satisfies the weak left cancellation property, and it extends further to
$${\sf Span}(\C,\S)\to{\sf Sect}(\C,\S)\to{\sf Par}(\C,\S)\to{\sf RaPar}(\C,\S)\to{\sf Retr}(\C,\S)=\C[\S^{-1}]$$
 when $\S$ belongs to an $\S$-stable factorization system $(\mathcal P,\S)$ of $\C$.
When $\S$ is a class of monomorphisms, the chain simplifies to
$$C\to{\sf Span}(\C,\S)={\sf Sect}(\C,\S)={\sf Par}(\C,\S)\to{\sf Retr}(\C,\S)=\C[\S^{-1}],$$
and one then has also \,${\sf Par}(\C,\S) ={\sf RaPar}(\C,\S)$,
should $\S$ be part of an $\S$-stable factorization system $({\mathcal P},\S)$.

 Quite a different picture emerges when one puts additional constraints on $\S$ that are typically satisfied by classes of epimorphisms, rather than monomorphisms. In Sections 4 and 5 we show that, when $\C$ has finite products with all projections lying in $\S$, and if there is no strict initial object in $\C$, then ${\sf Par}(\C.\S)$ is equivalent to the terminal category $\sf 1$, and one has
$$C\to{\sf Span}(\C,\S)\to{\sf Retr}(\C,\S)\to{\sf Sect}(\C,\S)={\sf Par}(\C,\S)=\C[\S^{-1}]\simeq{\sf 1}.$$

The ultimate justification for the formation of {\sf Par}$(\C,S)$ and {\sf RaPar}$(\C,\S)$ lies in their universal role, as presented in Sections 7 and 9, respectively. In fact, {\sf Par} is the left adjoint of a 2-adjunction whose right adjoint {\sf Total} assigns to every split restriction category $\X$ its category of {\em total maps}, structured by its class of {\em restriction isomorphisms}. We show that, when forming the unit of this (very large) adjunction at a ``structured object" $(\C,\S)$, it becomes an isomorphism {\em precisely} when $\S$ is a class of monomorphisms in $\C$. In other words, we present a global 2-adjunction, whose restriction to its ``fixed objects" is exactly the 2-equivalence used in \cite{colackrest} for the characterization of split restriction categories. Likewise, {\sf RaPar} is the left adjoint of a 2-adjunction involving split range categories and categories $\C$ structured by a class $\S$, which now must be part of a factorization system of $\C$ that is stable under pullback along $\S$-morphisms. And again, the 2-equivalence characterizing split range categories as established in \cite{cgh} emerges as the restriction of our 2-adjunction to its fixed objects.

{\em Acknowledgements.} We thank the anonymous referee of the first version of this paper (communicated under a different title as \cite{abandon}) for various questions and suggestions which, in particular, helped us craft the current version of Section 7. We are also grateful to Fernando Lucatelli Nunes who pointed us to the Appendix of Hermida's paper \cite{Hermida}, to emphasize the 2-categorical significance of the $\S$-sectionable and $\S$-retractable span categories, as we indicate in Section 10.

 \section{Span categories and their quotients}
Throughout this paper, we consider a class $\mathcal{S}$ of morphisms in a category $\C$ such that
\begin{itemize}
\item
$\S$ contains all isomorphisms and is closed under composition, and
\item
pullbacks of $\S$-morphisms along arbitrary morphisms exist in $\C$ and belong to $\S$.
\end{itemize}
In particular, we may consider $\mathcal{S}$ as a (non-full) subcategory of $\C$ with the same objects as $\C$. For objects $A,B$ in $\C$, an {\em{$\S$-span}} $(s,f)$ with {\em domain} $A$ and {\em codomain} $B$ is given by a pair of morphisms
\begin{center}
	$\xymatrix{A & D\ar[l]_{s}\ar[r]^{f} & B}$
\end{center}
with $s$ in $\S$ and $f$ in $\C$. These are the objects of the category
$${\sf{Span}}(\C,\S)(A,B)$$
whose morphisms $x:(s,f)\longrightarrow(\tilde{s},\tilde{f})$ are given by $\C$-morphisms $x$ with $\tilde{s}\cdot x=s$ and $\tilde{f}\cdot x=f$, to be composed ``vertically" as in $\C$.
\begin{center}
	$\xymatrix{&& D\ar[lld]_{s}\ar[dd]_{x}\ar[rrd]^{f} && \\
	A && && B\\
&& E\ar[llu]^{\tilde{s}}\ar[rru]_{\tilde{f}} &&}$
\end{center}
Of course, isomorphisms in this category are given by isomorphisms, $x$, in $\C$ making the diagram commute. Notationally we will not distinguish between the pair $(s,f)$ and its isomorphism class in
${\sf{Span}}(\C,\S)(A,B)$.

 The hypotheses on $\S$ guarantee that, when composing $(s,f):A\longrightarrow B$ ``horizontally" with an $\S$-span $(t,g):B\longrightarrow C$ via a (tacitly chosen) pullback $(t',f')$ of $(f,t)$ (see the first diagram in the Introduction),
the composite span $(t,g)\cdot (s,f) := (s\cdot t',g\cdot f')$ is again an $\S$-span. We denote the resulting (ordinary) category\footnote{We remind the reader that $\sf{Span}(\C,\S)$ may, unlike $\C$, fail to have small hom-sets.} of isomorphism classes of $\S$-spans by $$\sf{Span}(\C,\S).$$

 Now we can consider a {\em compatible} relation on $\sf{Span}(\C,\S)$, that is: a relation for $\S$-spans such that
\begin{itemize}
\item only $\S$-spans with the same domain and codomain may be related;
\item vertically isomorphic $\S$-spans are related;
\item horizontal composition from either side preserves the relation.
\end{itemize}
It is a routine exercise, and a fact used frequently in this paper, to show that {\em the least equivalence relation for $\S$-spans generated by a given compatible relation is again compatible}.

 For a compatible equivalence relation $\sim$ we denote the $\sim$-equivalence class of $(s,f)$ by $[s,f]_{\sim}$, or simply by $[s,f]$ when the context makes it clear which relation $\sim$ we are referring to, and we write
$$\sf{Span}_{\sim}(\C,\S)$$
for the resulting quotient category.
We have the pair of functors
$$\Phi_{\sim}=\Phi:\C\longrightarrow\; \sf{Span}_{\sim}(\C,\S)\;\longleftarrow\S^{\rm{op}}:\Psi=\Psi_{\sim}$$
$$(f:D\to B)\longmapsto [1_D,f]\quad\quad [s,1_D] \longleftarrow\!\shortmid (A\leftarrow D:s)$$
which let us decompose every morphism $[s,f]:A\to B$ of $\sf{Span}_{\sim}(\C,\S)$ as 
$$[s,f]=[1_D,f]\cdot[s,1_D]=\Phi f\cdot\Psi s.$$ In fact, as we show next, one can easily characterize the pair by a universal property, using the following terminology.

\begin{definition}\label{BC property}
A pair of functors
$$ F:\C\longrightarrow \mathcal{D}\longleftarrow \S^{\rm{op}}:G$$
into some category $\D$ is said to satisfy the {\em Beck-Chevalley (BC-) property} if
\begin{itemize}
\item $F$ and $G$ coincide on objects, 
\item whenever the square on the left is a pullback diagram in $\C$ with $s\in \S$, then the square on the right commutes:
$$\xymatrix{P\ar[r]^{f'}\ar[d]_{s'}\ar@{}[rd]|{\rm{pb}} & E\ar[d]^{s}\\D\ar[r]_{f} & B}\quad\quad\quad\quad
			\xymatrix{FP\ar@{}[rd]|{}\ar[r]^{ Ff'} & FE\\
			FD\ar[r]_{Ff}\ar[u]^{Gs'} & FB.\ar[u]_{Gs}}			
			$$
\end{itemize}
For a compatible equivalence relation $\sim$ on ${\sf Span}(\C,\S)$, the pair $(F,G)$ is called $\sim$-{\em consistent} if
\begin{itemize}
\item whenever $(s,f)\sim(\tilde{s},\tilde{f})$, then $Ff\cdot Gs=F\tilde{f}\cdot G\tilde{s}$.

\end{itemize}
\end{definition}
Trivially, for a compatible equivalence relation $\sim$, the pair $(\Phi,\Psi)$ is $\sim$-consistent and, obviously, it also satisfies the BC-property, since the following two composite spans coincide:

\begin{center}
	$\xymatrix{&& P\ar[ld]_{s'}\ar[rd]^{f'} && \\
	& D\ar[ld]_{1}\ar[rd]^{f}\ar@{}[rr]|{\rm{pb}} & & E\ar[ld]_{s}\ar[rd]^{1} & \\
D && B && E}$
\hfil
	$\xymatrix{&& P\ar[ld]_{1}\ar[rd]^{1} && \\
	& P\ar[ld]_{s'}\ar[rd]^{1} & & E\ar[ld]_{1}\ar[rd]^{f'} & \\
D && P && E}$
\end{center}
We now confirm that $(\Phi,\Psi)$ is universal with these properties, as follows:

 \begin{proposition}\label{BC}
For a compatible equivalence relation $\sim$ on $\sf{Span}(\C,\S)$, every $\sim$-consistent pair of functors $(F,G)$ satisfying the Beck-Chevalley property factors as $F=H\Phi,\, G=H\Psi$, with a uniquely determined functor $H$, as in
\begin{center}
			$\xymatrix{\C\ar[rr]^{\Phi}\ar[rrd]_{F} && \sf{Span}_{\sim}(\C,\S)			\ar[d]_{H} && \mathcal S^{\rm{op}}\ar[ll]_{\Psi}\ar[lld]^{G}\\
			&& \mathcal D &&}$
		\end{center}
 \end{proposition}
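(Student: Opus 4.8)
The plan is to observe that the functor $H$ is completely forced by the factorization requirement, and then to verify that the forced formula is legitimate. On objects, since $\Phi$ and $\Psi$ coincide there with $F$ and $G$, one must set $HA = FA = GA$. On morphisms, the decomposition $[s,f]=\Phi f\cdot\Psi s$ together with functoriality of $H$ leaves no choice but to put
$$H[s,f]=H(\Phi f)\cdot H(\Psi s)=Ff\cdot Gs.$$
This single formula simultaneously settles \emph{uniqueness}: any $H$ with $H\Phi=F$ and $H\Psi=G$ must be given by it, since every morphism of $\sf{Span}_{\sim}(\C,\S)$ factors as above.

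The first thing to check is that the value $Ff\cdot Gs$ depends only on the class $[s,f]_{\sim}$ and not on the chosen span $(s,f)$ representing it. This is precisely the content of the $\sim$-consistency hypothesis. It already subsumes independence of the representative of the underlying isomorphism class of spans, because a compatible relation $\sim$ relates vertically isomorphic $\S$-spans by definition; hence no separate argument for isomorphism-invariance is needed, and $H$ is well defined on $\sim$-classes.

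Next I would verify functoriality. Preservation of identities is immediate, as the identity on $A$ is $[1_A,1_A]$ and $H[1_A,1_A]=F(1_A)\cdot G(1_A)=1_{FA}$. Preservation of composition is the crux, and it is exactly here that the Beck--Chevalley property is used. Writing the composite $[t,g]\cdot[s,f]=[s\cdot t',\,g\cdot f']$ by means of the chosen pullback $(t',f')$ of $(f,t)$ as in the composition diagram, one computes
$$H([s\cdot t',\,g\cdot f'])=F(g\cdot f')\cdot G(s\cdot t')=Fg\cdot(Ff'\cdot Gt')\cdot Gs.$$
The pullback square with apex $P$, legs $t',f'$ and $\S$-leg $t$ is a pullback diagram in $\C$ of the type to which the BC-property applies, giving $Ff'\cdot Gt'=Gt\cdot Ff$. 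Substituting this turns the right-hand side into $(Fg\cdot Gt)\cdot(Ff\cdot Gs)=H[t,g]\cdot H[s,f]$, as required. Independence of the chosen pullback is automatic, since different pullbacks yield isomorphic, hence $\sim$-related, composite spans, already handled by the well-definedness step.

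Finally, the factorizations follow by direct substitution: $H\Phi f=H[1_D,f]=Ff\cdot G(1_D)=Ff$ and $H\Psi s=H[s,1_D]=F(1_D)\cdot Gs=Gs$, with both sides agreeing on objects. I expect the composition step to be the main obstacle, or rather the single substantive point: the whole construction hinges on recognizing that the pullback square governing horizontal span composition is precisely the square on which the BC-property delivers the identity $Ff'\cdot Gt'=Gt\cdot Ff$ that makes $H$ multiplicative; everything else is bookkeeping driven by the forced definition of $H$.
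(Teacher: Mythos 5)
Your proposal is correct and follows essentially the same route as the paper's proof: the functor $H$ is forced by the decomposition $[s,f]=\Phi f\cdot\Psi s$ to be $H[s,f]=Ff\cdot Gs$, with $\sim$-consistency guaranteeing well-definedness and the Beck--Chevalley identity $Ff'\cdot Gt'=Gt\cdot Ff$ applied to the composition pullback yielding functoriality. Your additional remarks on isomorphism-invariance and independence of the chosen pullback are correct bookkeeping that the paper leaves implicit.
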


 \begin{proof}
Since every morphism $[s,f]$ in $\sf{Span}_{\sim}(\C,\S)$ factors as $[s,f]=\Phi f\cdot \Psi s$, any functor $H$ factoring as claimed must necessarily map $[s,f]$ to $Ff\cdot Gs$, and $\sim$-consistency allows us to define $H$ in this way. Trivially then, $H$ preserves the identity morphisms [1,1], and the Beck-Chevalley property ensures the preservation of composition:
\begin{center}
 \begin{tabular}{ll}
$H([t,g]\cdot[s,f])=H[s\cdot t',g\cdot f']$&$=F(g\cdot f')\cdot G(s\cdot t')$\\&$=Fg\cdot Ff'\cdot Gt'\cdot Gs$\\&$= Fg\cdot Gt\cdot Ff\cdot Gs$\\&$=H[t,g]\cdot H[s,f].$\\
\end{tabular}
\end{center}
 \end{proof}
 
 \begin{remark}\label{initial}
 Fixing a compatible equivalence relation $\sim$ on $\sf{Span}(\C,\S)$, one can set up the (very large) category ${\sf CBC}(\C,\S,\sim)$, whose objects are triples $(\D,F,G)$ with $(F,G)$ a $\sim$-consistent pair satisfying the Beck-Chevalley property; a morphism $H:(\D,F,G)\to(\D',F',G')$ is simply a functor $H:\D\to\D'$ satisfying $HF=F'$ and $HG=G'$. Then 
 $${\sf CBC}(\C,\S,\sim)\to{\bf CAT},\quad (\D,F,G)\mapsto \D,$$
 defines an opfibration, and {\em Proposition \ref{BC}} describes $(\sf{Span}_{\sim}(\C,\S),\Phi,\Psi)$ as an initial object in ${\sf CBC}(\C,\S,\sim)$. 
  \end{remark}

 \section{The $\S$-retractable span category {\sf Retr}$(\C,\S)$ of $\C$}

 \begin{definition}\label{retractive}
For $\S$-spans with the same domain and codomain we consider the preorder defined by
$$(s,f)\leqslant(\tilde{s},\tilde{f})\Longleftrightarrow \exists\; x:(s,f)\longrightarrow(\tilde{s},\tilde{f}),\;
x\in\S,$$
and call the least equivalence relation on all $\S$-spans containing the reflexive and transitive relation $\leqslant$ the {\em zig-zag relation}, denoting it by $\sim_{\rm{z}}$. The obvious compatibility of $\leqslant$ makes $\sim_{\rm{z}}$ also compatible.
Writing just z instead of $\sim_{\rm{z}}$ when $\sim_{\rm{z}}$ is used as an index, we call the quotient category
$$\sf {Retr(\C,\S)}:=\sf{Span}_{\rm{z}}(\C,\S)$$
the $\S${\em-retractable span category of} $\C$.
It comes with the functors
$$\Phi_{\rm{z}}:\C\longrightarrow\; \sf{Retr}(\C,\S)\;\longleftarrow\S^{\rm{op}}:\Psi_{\rm{z}}$$
$$(f:D\to B)\longmapsto [1_D,f]_{\rm{z}}\quad\quad [s,1_D]_{\rm{z}} \longleftarrow\!\shortmid (A\leftarrow D:s).$$
\end{definition}

The terminology may be justified by the fact that $\Phi_{\rm z}$ transforms $\S$-morphisms into retractions, in a universal manner:

 \begin{proposition}\label{connectivity universal}
For all $s\in \S$ one has $\Phi_{\rm{z}} s \cdot\Psi_{\rm{z}} s=1$. Furthermore,
any pair of functors $(F:\C\to\D,\;G:\S^{\rm op}\to\D)$ satisfying the Beck-Chevalley property and the equalities $Fs\cdot Gs=1\;(s\in\S)$ factors through the pair $(\Phi_{\rm{z}},\Psi_{\rm{z}})$, by a functor $H:{\sf Retr}(\C,\S)\to\D$ that is uniquely determined by $H\Phi_{\rm z}=F,\;H\Psi_{\rm z}=G$.
\end{proposition}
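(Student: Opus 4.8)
The plan is to reduce Proposition \ref{connectivity universal} to the already-established universal property of Proposition \ref{BC}, by recognizing that ``$Fs\cdot Gs=1$ for all $s\in\S$'' is precisely the extra condition that forces $(F,G)$ to be $\sim_{\rm z}$-consistent. The first order of business is to verify the opening claim that $\Phi_{\rm z}s\cdot\Psi_{\rm z}s=1$. Unravelling the definitions, $\Psi_{\rm z}s=[s,1]_{\rm z}$ and $\Phi_{\rm z}s=[1,s]_{\rm z}$, and their composite is computed via a pullback of $(s,s)$; one may take the diagonal $D\to D\times_B D$ or, more simply, observe that $(1,1):D\to D$ serves as a pullback cone so that the composite span is $[s,s]_{\rm z}$. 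Now $[s,s]_{\rm z}=[1,1]_{\rm z}$ holds because the identity $1_D$ witnesses $(1,1)\leqslant(s,s)$ (indeed $s\cdot 1=s$ and $s\cdot 1=s$, with $1\in\S$), so the two spans are identified under $\sim_{\rm z}$. Hence $\Phi_{\rm z}s\cdot\Psi_{\rm z}s=1$, as required.

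Next I would establish the key implication: any BC-pair $(F,G)$ satisfying $Fs\cdot Gs=1$ for all $s\in\S$ is automatically $\sim_{\rm z}$-consistent. By Definition \ref{retractive}, $\sim_{\rm z}$ is the least equivalence relation containing the preorder $\leqslant$, so it suffices to check that $(F,G)$ respects a single generating step $(s,f)\leqslant(\tilde s,\tilde f)$; consistency across the full equivalence relation then follows by symmetry and transitivity of equality in $\D$. For such a step there is $x\in\S$ with $\tilde s\cdot x=s$, $\tilde f\cdot x=f$. I would then compute
\[
Ff\cdot Gs=F(\tilde f\cdot x)\cdot G(\tilde s\cdot x)=F\tilde f\cdot Fx\cdot Gx\cdot G\tilde s=F\tilde f\cdot G\tilde s,
\]
using functoriality of $F$ on $\C$ and of $G$ on $\S^{\rm op}$ (so $G(\tilde s\cdot x)=Gx\cdot G\tilde s$), together with the hypothesis $Fx\cdot Gx=1$ applied to $x\in\S$. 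This is exactly the $\sim_{\rm z}$-consistency condition for the generating pair.

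With $\sim_{\rm z}$-consistency in hand, Proposition \ref{BC} applies verbatim: it yields a unique functor $H:{\sf Retr}(\C,\S)={\sf Span}_{\rm z}(\C,\S)\to\D$ with $H\Phi_{\rm z}=F$ and $H\Psi_{\rm z}=G$. Uniqueness of $H$ is inherited directly, since any factorizing functor must send $[s,f]_{\rm z}$ to $Ff\cdot Gs$. The main obstacle, such as it is, lies entirely in the consistency reduction: one must be careful that the generating relation $\leqslant$ is only a preorder, not symmetric, so the displayed calculation genuinely relies on the retraction identity $Fx\cdot Gx=1$ (rather than a two-sided inverse) to collapse the inserted factor, and one must confirm that the resulting equality, being an honest equation in $\D$, propagates through the symmetric-transitive closure to all of $\sim_{\rm z}$. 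Everything else is a formal appeal to the preceding proposition.
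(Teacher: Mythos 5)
Your overall strategy is exactly the paper's: compute $\Phi_{\rm z}s\cdot\Psi_{\rm z}s=[s,s]_{\rm z}=[1,1]_{\rm z}=1$, show that the equalities $Fs\cdot Gs=1$ force $\sim_{\rm z}$-consistency by checking it on a single generating step of the preorder $\leqslant$, and then invoke Proposition \ref{BC}. Your consistency computation ($Ff\cdot Gs=F\tilde f\cdot Fx\cdot Gx\cdot G\tilde s=F\tilde f\cdot G\tilde s$) and the final appeal to Proposition \ref{BC} coincide with the paper's proof and are correct.

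However, your verification of the first claim does not go through as written, for two reasons, both of which matter precisely because $\S$ is not assumed to consist of monomorphisms. First, the composite $\Phi_{\rm z}s\cdot\Psi_{\rm z}s=[1,s]_{\rm z}\cdot[s,1]_{\rm z}$ is computed by pulling back the two \emph{middle} legs of the spans being composed, which are both identities --- not ``via a pullback of $(s,s)$''. If the relevant pullback really were that of the cospan $\langle s,s\rangle$, its apex would be the kernel pair of $s$, and your suggestion to take the diagonal, or to use $(1,1)$ as the pullback cone, would be legitimate only when $s$ is monic --- exactly the hypothesis this paper is dispensing with. (That kernel-pair pullback is what occurs in Lemma \ref{section}, for the \emph{other} composite $\Psi_{\rm a}s\cdot\Phi_{\rm a}s$, and there the identification with the identity costs real work.) Fortunately the pullback actually needed here is of $(1_D,1_D)$, so the composite is indeed $[s,s]_{\rm z}$. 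Second, your claim that ``the identity $1_D$ witnesses $(1,1)\leqslant(s,s)$'' is false in general: by Definition \ref{retractive}, $(1,1)\leqslant(s,s)$ would require some $x\in\S$ with $s\cdot x=1$, i.e., $s$ would have to be a split epimorphism with a section lying in $\S$; moreover, the equations you actually check, $s\cdot 1=s$, only verify reflexivity of $\leqslant$ at $(s,s)$. What is true --- and what the paper uses --- is the \emph{opposite} inequality $(s,s)\leqslant(1,1)$, witnessed by $x=s\in\S$ (check: $1\cdot s=s$ for both legs). Since $\sim_{\rm z}$ is the equivalence relation generated by $\leqslant$, this single inequality already yields $[s,s]_{\rm z}=[1,1]_{\rm z}$, so your conclusion stands once the witness and direction are corrected; but as written, the justification of $\Phi_{\rm z}s\cdot\Psi_{\rm z}s=1$ is erroneous, and the error is of the kind (conflating $(s,s)$-pullbacks and kernel pairs with identities) that the non-monic setting is specifically designed to guard against.
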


 \begin{proof}
 Since, for $s\in\S$, one trivially has
$(s,s)\leqslant(1,1)$, we see that
$$\Phi_{\rm{z}} s \cdot\Psi_{\rm{z}} s=[1,s]_{\rm{z}}\cdot[s,1]_{\rm{z}}=[s,s]_{\rm{z}}=[1,1]_{\rm{z}}=1.$$
For the stated universal property, applying Proposition \ref{BC}, we just need to confirm that the equalities $Fs\cdot Gs=1\,(s\in\S)$ make $(F,G)$ $\sim_{\rm{z}}$-consistent. But this point is clear since, when
$(s,f)\leqslant(\tilde{s},\tilde{f})$, so that $\tilde{s}\cdot x=s,\,\tilde{f}\cdot x=f$ for some $x\in\S$, we have
$$Ff\cdot Gs=F\tilde{f}\cdot Fx\cdot Gx\cdot G\tilde{s}=F\tilde{f}\cdot G\tilde{s}.$$
\end{proof}

\begin{remark}\label{initial again}
Similarly to {\em Remark \ref{initial}}, one may set up a (very large) category in which $({\sf Retr}(\C,\S),\Phi_{\rm z},\Psi_{\rm z})$ is described as an initial object.
\end{remark}

We may think of Proposition \ref{connectivity universal} as ``going halfway" towards the construction of the {\em category $\C[\S^{-1}]$ of fractions} with respect to $\S$ (see \cite{gabzis, bor}). While we will return to this aspect in Section 5, let us mention immediately the following consequence of Proposition \ref{connectivity universal}:
\begin{corollary}\label{mono imply section}
When $\S$ is a class of monomorphisms in $\C$, then the category ${\sf Retr}(\C,\S)$ is (isomorphic to) the category $\C[\S^{-1}]$. 
\end{corollary}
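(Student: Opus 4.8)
The plan is to exhibit an isomorphism of categories by matching the universal property of {\sf Retr}$(\C,\S)$ recorded in Proposition \ref{connectivity universal} with the defining universal property of $\C[\S^{-1}]$, namely that the localizing functor $L\colon\C\to\C[\S^{-1}]$ is universal among functors out of $\C$ that send every $s\in\S$ to an isomorphism.

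The crux is the first step: once $\S$ consists of monomorphisms, $\Phi_{\rm z}$ no longer merely turns $\S$-morphisms into retractions but into genuine isomorphisms. Proposition \ref{connectivity universal} already supplies $\Phi_{\rm z} s\cdot\Psi_{\rm z} s=1$, so it suffices to produce the opposite identity $\Psi_{\rm z} s\cdot\Phi_{\rm z} s=1$. Computing this composite in {\sf Span}$(\C,\S)$ amounts to pulling $s$ back along itself; since $s$ is monic, its kernel pair is trivial, the chosen pullback is the diagonal, and the composite span is $[1,1]$, whence $\Psi_{\rm z} s=(\Phi_{\rm z} s)^{-1}$. Equivalently, and perhaps more transparently, this identity is forced by the Beck--Chevalley property applied to the trivial pullback of $s$ against itself: taking $f'=s'=1$ and the bottom leg equal to $s$, that square reads $\Phi_{\rm z} 1\cdot\Psi_{\rm z} 1=\Psi_{\rm z} s\cdot\Phi_{\rm z} s$, i.e. $1=\Psi_{\rm z} s\cdot\Phi_{\rm z} s$. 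This is exactly where the monomorphism hypothesis is indispensable, and I expect it to be the only genuinely non-formal point; without it one recovers only the retraction of Proposition \ref{connectivity universal}.

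With $\Phi_{\rm z}$ now inverting $\S$, the universal property of $\C[\S^{-1}]$ yields a unique comparison functor $K\colon\C[\S^{-1}]\to{\sf Retr}(\C,\S)$ with $K L=\Phi_{\rm z}$. For the reverse direction I would feed Proposition \ref{connectivity universal} the pair $(L,G)$, where $G\colon\S^{\rm op}\to\C[\S^{-1}]$ is defined by $G s=(L s)^{-1}$. One checks that $G$ is a functor, that $L s\cdot G s=1$ holds trivially, and that $(L,G)$ satisfies Beck--Chevalley: from a pullback square $s\cdot f'=f\cdot s'$ one gets $L s\cdot L f'=L f\cdot L s'$ in $\C[\S^{-1}]$, and multiplying by the inverses of $L s$ and $L s'$ delivers $L f'\cdot(L s')^{-1}=(L s)^{-1}\cdot L f$, precisely the required commutativity (note this uses only that $\S$ is inverted, not that it is monic). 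Proposition \ref{connectivity universal} then produces a unique $K'\colon{\sf Retr}(\C,\S)\to\C[\S^{-1}]$ with $K'\Phi_{\rm z}=L$ and $K'\Psi_{\rm z}=G$.

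Finally I would check that $K$ and $K'$ are mutually inverse. The composite $K'K$ satisfies $K'KL=K'\Phi_{\rm z}=L$, so the uniqueness clause of the universal property of $\C[\S^{-1}]$ forces $K'K=1$. For $KK'$, both $KK'$ and the identity restrict along $(\Phi_{\rm z},\Psi_{\rm z})$ to the same data: $KK'\Phi_{\rm z}=KL=\Phi_{\rm z}$, and $KK'\Psi_{\rm z}=KG=\Psi_{\rm z}$, the last equality because $K$ sends the isomorphism $L s$ to $\Phi_{\rm z} s$ and hence its inverse $G s=(L s)^{-1}$ to $(\Phi_{\rm z} s)^{-1}=\Psi_{\rm z} s$. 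The uniqueness clause of Proposition \ref{connectivity universal} then gives $KK'=1$. Thus $K$ is an isomorphism of categories and ${\sf Retr}(\C,\S)\cong\C[\S^{-1}]$.
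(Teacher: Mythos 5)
Your proposal is correct and takes essentially the same route as the paper: the crucial point in both is that the Beck--Chevalley property applied to the trivial self-pullback of a monomorphism $s$ forces $\Psi_{\rm z}s\cdot\Phi_{\rm z}s=1$, and that any $\S$-inverting functor $F$ pairs with $Gs=(Fs)^{-1}$ to satisfy the hypotheses of Proposition \ref{connectivity universal}. The paper merely packages these two observations (via its Lemma \ref{BC Mono Lemma}) as the assertion that $\Phi_{\rm z}$ itself enjoys the localizing universal property, whereas you spell out the resulting pair of mutually inverse comparison functors; the mathematical content is the same.
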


\begin{proof}
It suffices to show that the functor $\Phi_{\rm z}:\C\to{\sf Retr}(\C,\S)$ is universal with respect to mapping $\S$-morphisms into isomorphisms. This, however, is a straight consequence of Proposition \ref{connectivity universal} once we have proved the following easy Lemma.
\end{proof}

\begin{lemma}\label{BC Mono Lemma}
When $\S$ is a class of monomorphisms in $\C$, any functor $F:\C\to\D$ maps $\S$-morphisms into isomorphisms if, and only if, there is a functor $G:\S^{\rm op}\to\D$ such that $Fs\cdot Gs=1$ for all $s\in\S$, and the pair 
$(F,G)$ satisfies the BC-property; such functor $G$ is then uniquely determined by $F$.
\end{lemma}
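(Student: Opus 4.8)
The plan is to prove the two implications separately, after which the asserted uniqueness of $G$ drops out at once; throughout I exploit the standing hypotheses on $\S$ together with the assumption that every $s\in\S$ is monic. Suppose first that such a $G$ exists, so that $Fs\cdot Gs=1$ and $(F,G)$ satisfies the BC-property. This equation already exhibits $Gs$ as a right inverse of $Fs$, so it remains only to produce a left inverse --- and this is the single point at which the mono hypothesis is essential. Since $s\in\S$ is a monomorphism, its kernel pair is trivial, so that
$$\xymatrix{D\ar[r]^{1}\ar[d]_{1} & D\ar[d]^{s}\\ D\ar[r]_{s} & B}$$
is a pullback square with $s\in\S$. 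Feeding it into the BC-property yields $F1\cdot G1=Gs\cdot Fs$, that is $Gs\cdot Fs=1$; hence $Fs$ is invertible, with inverse $Gs$.

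For the converse, assume $F$ carries every $\S$-morphism to an isomorphism. I define $G:\S^{\rm op}\to\D$ to agree with $F$ on objects and to send each $s\in\S$ to $Gs:=(Fs)^{-1}$. Functoriality of $G$ is routine: it preserves identities because $F$ does, and it reverses composites because inversion does, giving $G(t\cdot s)=(Ft\cdot Fs)^{-1}=Gs\cdot Gt$. The equation $Fs\cdot Gs=1$ holds by construction. To check the BC-property, I take a pullback square as in Definition \ref{BC property}; applying $F$ to its commutativity gives $Fs\cdot Ff'=Ff\cdot Fs'$, and since $s,s'\in\S$ the morphisms $Fs,Fs'$ are invertible, so left-multiplying by $(Fs)^{-1}$ and right-multiplying by $(Fs')^{-1}$ rearranges this identity into $Ff'\cdot Gs'=Gs\cdot Ff$, which is exactly the commutativity demanded by BC.

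Finally, uniqueness is immediate: if $F$ sends $\S$-morphisms to isomorphisms and $G$ satisfies $Fs\cdot Gs=1$, then $Gs$ is forced to be the two-sided inverse $(Fs)^{-1}$ of the isomorphism $Fs$, so $G$ is completely determined by $F$. The only genuinely load-bearing step is the left-inverse computation in the first implication, where the triviality of the kernel pair of a monomorphism converts the BC-property into the equation $Gs\cdot Fs=1$; everything else is direct, variance-tracking manipulation with inverses and presents no real difficulty, consistent with the paper's billing of this as an easy lemma.
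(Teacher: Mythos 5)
Your proof is correct and follows essentially the same route as the paper's: one direction defines $G$ by $Gs=(Fs)^{-1}$ and verifies functoriality, the unit equation, and the BC-property by inverting the $F$-image of a pullback square, while the other direction exploits that a monomorphism has trivial kernel pair, so the identity square over $s$ is a pullback whose BC-image yields $Gs\cdot Fs=1$. You merely spell out the verifications the paper labels ``clearly'' and make the (immediate) uniqueness argument explicit, so there is nothing to correct.
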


\begin{proof}
Clearly, if $F(\S)$ is a class of isomorphisms in $\D$, the functor $G$ defined by $Gs=(Fs)^{-1}\;(s\in\S)$ has the desired properties. Conversely, 
for a monomorphism $s$ the following square on the left is a pullback diagram, so that the Beck-Chevalley property makes the square on the right commute:
$$\xymatrix{D\ar[r]^{1}\ar[d]_{1}\ar@{}[rd]|{\rm{pb}} & D\ar[d]^{s}\\D\ar[r]_{s} & A}\quad\quad\quad\quad
			\xymatrix{FD\ar@{}[rd]|{}\ar[r]^{1} & FD\\
			FD\ar[r]_{Fs}\ar[u]^{1} & FA\ar[u]_{G s}}.$$
			In conjunction with the hypothesis $Fs\cdot Gs=1$, this makes $Fs$ an isomorphism.
\end{proof}

\begin{remark}\label{trivial retractive}
We caution the reader that often the category ${\sf Retr}(\C,\S)$ (and, consequently, also the fraction category $\C[\S^{-1}]$) turns out to be trivial. For example:
 {\em If $\C$ has an initial object $0$ and $\S$ contains the morphisms $!^C:0\to C$ for all objects $C$ in $\C$, then ${\sf Retr}(\C,\S)$ is equivalent to the terminal category $\sf 1$, i.e., all hom-sets of ${\sf Retr}(\C,\S)$ are singletons.}
 Indeed, with the provision $!^A\in\S$ one has $(!^A,!^B)\leqslant(s,f)$, for all $\S$-spans $(s,f):A\to B$. Note that when the morphisms $0\to C,\,C\in\C,$ are monic and, in particular when $0$ is {\em strict initial}, so that the morphisms $C\to 0$ must be isomorphisms, then $!^C$ is a pullback of $0\to 1$, for $1$ terminal in $\C$; hence, having $0\to 1$ in $\S$ suffices to render ${\sf Retr}(\C,S)$ trivial in this case.

 \end{remark}




 \section{The $\S$-sectionable span category {\sf Sect}$(\C,\S)$ of $\C$}
Our next goal is to describe a compatible equivalence relation $\sim$ for $\S$-spans such that (in the notation of Section 2) $\Phi_{\sim}s$ is a section with retraction $\Psi_{\sim}s$, for all $s\in\S$, in a universal manner.
This equivalence relation will be induced by a certain relation for $\S$-{\em cospans}. These are isomorphism classes 
$\langle f, s\rangle$ of pairs 
\begin{center}
	$\xymatrix{A\ar[r]^f & D & B\ar[l]_{s}}$
\end{center}
of $\C$-morphisms with $s\in\S$; $A$ is the {\em domain} and $B$ the {\em codomain} of such an $\S$-cospan. Like for $\S$-spans, isomorphisms of $\S$-cospans live in the category
$${\sf{Cospan}}(\C,\S)(A,B),$$
which has ``vertical" morphisms $v:\langle f,s\rangle\longrightarrow\langle\tilde{f},\tilde{s}\rangle$
obeying $v\cdot f=\tilde{f}, v\cdot s=\tilde{s}$. We call a relation for $\S$-cospans {\em compatible} if
\begin{itemize}
\item only $\S$-cospans with the same domain and codomain may be related;
\item vertically isomorphic $\S$-cospans are related;
\item ``horizontal whiskering" by pre-composition from either side preserves the relation, that is: whenever $\langle f,s\rangle,\langle g,t\rangle$ are related, then also $\langle f\cdot h,s\cdot r\rangle,\langle g\cdot h,t\cdot r\rangle$ are related, for all $\C$-morphisms $h$ and $\S$-morphisms $r$ such that the composites $f\cdot h,\, s\cdot r$ are defined.
\end{itemize}
Like for $\S$-span relations one easily confirms that {\em the least equivalence relation for $\S$-cospans containing a given compatible relation for $\S$-cospans is again compatible}.

 Every $\S$-cospan $\langle f,s\rangle$ gives, via pullback, the $\S$-span $(s',f')={\rm{pb}}\langle f,s\rangle$. In fact, for objects $A,B$ in $\C$ one has a functor
$${\rm{pb}}:{\sf Cospan}(\C,\S)(A,B)\longrightarrow{\sf Span}(\C,\S)(A,B)$$
whose canonical definition on morphisms we will exploit only in Section 6; here the consideration of its action on objects suffices.

 \def\dv{\wr\wr}
\def\ddv{\hbox{\ $\wr\wr$\ }}

 \begin{definition}\label{amalgamated}
{\rm (1)} Like for $\S$-spans, one defines a preorder for $\S$-cospans with the same domain $A$ and codomain $B$ by
$$\langle f,s\rangle\eqslantless \langle\tilde{f},\tilde{s}\rangle\Longleftrightarrow \exists\; v:\langle f,s\rangle\longrightarrow\langle\tilde{f},\tilde{s}\rangle,\;v\in\S.$$
The preorder is obviously a compatible relation for $\S$-cospans.

{\rm (2)} For any compatible $\S$-cospan relation $\dv$, we call the least compatible $\S$-span relation $\approx$ satisfying
$$\langle f,s\rangle\ddv\langle g,t\rangle\Longrightarrow{\rm pb}\langle f,s\rangle\approx{\rm pb}\langle g,t\rangle$$
for all $\S$-cospans $\langle f,s\rangle,\langle g,t\rangle$ the {\em associated} $\S$-span relation of $\dv$.
\end{definition}

Before considering the associated $\S$-span relation of the $\S$-cospan relation $\eqslantless$, we need to describe the association procedure more explicitly:




 \def\dv{\wr\wr}
\def\ddv{\hbox{\ $\wr\wr$\ }}

 \begin{proposition}\label{associated span relation}
Let $\dv$ be a compatible $\S$-cospan relation. Its associated $\S$-span relation $\approx$ may be describeded as follows:
	
$(s,f)\approx(t,g)$ if, and only if, there exist morphisms $u$ in $\S$, $k$ in $\C$, and $\S$-cospans $\langle \check f,\check s\rangle, \langle \check g,\check t\rangle$ such that 
$$\langle \check f, \check s \rangle \ddv \langle \check g,\check t\rangle,\;(s,f)=(1,k)\cdot{\rm pb}\langle\check{f},\check{s}\rangle\cdot(u,1),\;(t,g)=(1,k)\cdot{\rm pb}\langle\check{g},\check{t}\rangle\cdot(u,1);$$		
the latter two identities mean that, for some pullback diagrams

 \begin{center}
$\xymatrix{D\ar[r]^{\hat{f}}\ar[d]_{\hat{s}} & K\ar[d]^{\check s}\\
			U\ar[r]_{\check f} & P}$\quad\quad
		$\xymatrix{E\ar[r]^{\hat{g}}\ar[d]_{\hat{t}} & K\ar[d]^{\check t}\\
			U\ar[r]_{\check g} & Q,}$
	\end{center}

\noindent one obtains the commutative diagram
	\begin{center}
		$\xymatrix{&& D\ar[lld]_{s}\ar[ld]^{\hat{s}}\ar[rd]_{\hat{f}}\ar[rrd]^{f} &&\\
			A &U\ar[l]_{u}&& K\ar[r]^{k} & B\\
			&& E\ar[llu]^{t}\ar[lu]_{\hat{t}}\ar[ru]^{\hat{g}}\ar[rru]_{g} &&}$
	\end{center}	
\end{proposition}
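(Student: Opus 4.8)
Write $R$ for the relation on $\S$-spans defined by the right-hand side of the asserted equivalence, so that the goal is the identity $R=\approx$. I will prove the two inclusions separately. The inclusion $R\subseteq\approx$ is purely formal: if $(s,f)\mathrel R(t,g)$ is witnessed by $u\in\S$, $k\in\C$ and $\S$-cospans $\langle\check f,\check s\rangle\ddv\langle\check g,\check t\rangle$, then ${\rm pb}\langle\check f,\check s\rangle\approx{\rm pb}\langle\check g,\check t\rangle$ by the defining generating property of $\approx$, and horizontal composition on the right by $(u,1)$ and on the left by $(1,k)$ — both legitimate because $\approx$ is compatible — turns this into $(s,f)\approx(t,g)$. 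For the reverse inclusion $\approx\subseteq R$ it suffices, by minimality of $\approx$, to check that $R$ is itself a compatible $\S$-span relation containing all the generating pairs ${\rm pb}\langle\check f,\check s\rangle\mathrel R{\rm pb}\langle\check g,\check t\rangle$ (take $u=k=1$). That $R$ relates only $\S$-spans of equal domain and codomain is clear, and $R$ is reflexive — hence closed under vertical isomorphism — because $(s,f)=(1,f)\cdot{\rm pb}\langle 1,1\rangle\cdot(s,1)$ while $\langle 1,1\rangle\ddv\langle 1,1\rangle$, the relation $\dv$ being reflexive as a compatible relation. The entire weight of the proof therefore rests on showing that $R$ is closed under horizontal composition.

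The key tool will be two pullback-pasting identities. For any $\S$-cospan $\langle f,s\rangle$ (with non-$\S$-leg $f$ and $\S$-leg $s$), any $\C$-morphism $h$ and any $\S$-morphism $r$, composable as indicated, the pasting lemma for pullbacks yields
\[
{\rm pb}\langle f\cdot h,\,s\rangle=\big({\rm pb}\langle f,s\rangle\big)\cdot(1,h),\qquad
{\rm pb}\langle f,\,s\cdot r\rangle=(r,1)\cdot\big({\rm pb}\langle f,s\rangle\big);
\]
in words, pre-composing the non-$\S$-leg by $h$ amounts to right-whiskering the pullback span by $(1,h)$, while pre-composing the $\S$-leg by $r$ amounts to left-whiskering it by $(r,1)$. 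Next I reduce the verification of horizontal closure to four elementary whiskerings. Since every $\S$-span factors as $(s,f)=(1,f)\cdot(s,1)$, composition with an arbitrary $\S$-span on either side decomposes into composition with spans of the two special shapes $(1,-)$ and $(-,1)$; hence it is enough to show that $R$-relatedness is preserved under left composition by $(1,q)$ and by $(p,1)$ (with $p\in\S$), and under right composition by $(1,n)$ and by $(m,1)$ (with $m\in\S$). Two of these are immediate and leave the cospans untouched: $(1,q)\cdot(1,k)=(1,q\cdot k)$ simply replaces $k$ by $q\cdot k$, and $(u,1)\cdot(m,1)=(m\cdot u,1)$ simply replaces $u$ by the $\S$-morphism $m\cdot u$.

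The two remaining, ``crossed'' whiskerings are the substance of the argument, and they are dual to one another. Consider left composition by $(p,1)$ with $p\in\S$. The whisker first collides with the free parameter $(1,k)$: the composite $(p,1)\cdot(1,k)$ is the pullback span $(a,b)={\rm pb}\langle k,p\rangle$ of the cospan $\langle k,p\rangle$, with $a\in\S$ (a pullback of $p$). Splitting it as $(a,b)=(1,b)\cdot(a,1)$ and absorbing its inner half into the cospan by the second pasting identity, $(a,1)\cdot{\rm pb}\langle\check f,\check s\rangle={\rm pb}\langle\check f,\,\check s\cdot a\rangle$, gives
\[
(p,1)\cdot(s,f)=(1,b)\cdot{\rm pb}\langle\check f,\,\check s\cdot a\rangle\cdot(u,1),
\]
which is again of the $R$-shape, with $k$ replaced by $b$, with $u$ unchanged, and with the $\S$-leg of each cospan pre-composed by the same $\S$-morphism $a$. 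Because $\dv$ is stable under pre-composition of the $\S$-leg by $a\in\S$, the two modified cospans remain $\dv$-related, so $(p,1)\cdot(s,f)\mathrel R(p,1)\cdot(t,g)$. Right composition by $(1,n)$ is handled dually: here $(u,1)\cdot(1,n)=(c,d)={\rm pb}\langle n,u\rangle$ with $c\in\S$, and splitting $(c,d)=(1,d)\cdot(c,1)$ and absorbing the inner half by the first pasting identity, ${\rm pb}\langle\check f,\check s\rangle\cdot(1,d)={\rm pb}\langle\check f\cdot d,\,\check s\rangle$, yields $(s,f)\cdot(1,n)=(1,k)\cdot{\rm pb}\langle\check f\cdot d,\,\check s\rangle\cdot(c,1)$, with $u$ replaced by the $\S$-morphism $c$ and the non-$\S$-leg of each cospan pre-composed by the $\C$-morphism $d$; stability of $\dv$ under pre-composition of the non-$\S$-leg then keeps the cospans related.

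The main obstacle, and the only non-routine point, is precisely these two crossed whiskerings, where the inserted leg does not initially sit adjacent to the cospan and so cannot be absorbed directly. The device that resolves it is uniform: let the crossed whisker first meet the free parameter $(1,k)$, respectively $(u,1)$, thereby manufacturing a new pullback span; re-split that span so that its inner half can be absorbed into the cospan through the pasting identities, while its outer half is promoted to the new free parameter. The built-in stability of the cospan relation $\dv$ under whiskering is exactly what guarantees that the cospan modification forced by this manoeuvre respects $\dv$, and hence that $R$ is compatible. Combining the two inclusions gives $R=\approx$.
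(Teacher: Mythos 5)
Your proof is correct and takes essentially the same approach as the paper's: both arguments reduce the statement to showing that the explicitly described relation is a compatible $\S$-span relation containing the generating pairs (your two-inclusion framing merely repackages the paper's minimality remark), and both settle the only delicate point---closure under horizontal composition---by pasting pullbacks so that the modification is pushed onto the legs of the cospans, where the built-in whiskering-invariance of the cospan relation absorbs it. Your preliminary factorization of an arbitrary composing span as $(1,h)\cdot(r,1)$, isolating the two ``crossed'' cases, is only a modular reorganization of the paper's direct computation with a general span $(r,h)$; indeed the paper's pullback $k^{*}(r)$ is exactly your $a$, and its whiskered cospans $\langle \check f,\check s\cdot k^{*}(r)\rangle\,,\,\langle \check g,\check t\cdot k^{*}(r)\rangle$ match yours.
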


\begin{proof}
Let us denote by $\hat{\approx}$ the $\S$-span relation defined by the description of $\approx$ as claimed by the Proposition, so that our task is show $\approx \,= \hat{\approx}$.
For that it suffices to confirm that $\hat{\approx}$ is compatible, since it trivially satisfies ($\langle f,s\rangle\ddv\langle g,t\rangle\Longrightarrow{\rm pb}\langle f,s\rangle\,\hat{\approx}\,{\rm pb}\langle g,t\rangle$) and is obviously minimal with respect to that property and compatibility.

The $\S$-span relation $\hat{\approx}$ is certainly reflexive, that is: invariant under vertical isomorphism. Indeed, given an $\S$-span $(s,f):A\longrightarrow B$, one has the commutative diagram on the left and the trivial pullback diagram on the right:	
\begin{center}
		$\xymatrix{&& D\ar[lld]_{s}\ar[ld]^{1}\ar[rd]_{f}\ar[rrd]^{f} &&\\
			A &D\ar[l]_{s}&& B\ar[r]^{1} & B\\
			&& D\ar[llu]^{s}\ar[lu]_{1}\ar[ru]^{f}\ar[rru]_{f} &&}$
		\hfil
$\xymatrix{D\ar[r]^{f}\ar[d]_{1} & B\ar[d]^{1}\\
			D\ar[r]_{f} & B}$
	\end{center}
\noindent Hence, with $\langle f,1\rangle\ddv\langle f,1\rangle$ by reflexivity of $\dv$, one concludes $(s,f)\,\hat{\approx}\,(s,f)$.

To prove the invariance of $\hat{\approx}$ under horizontal composition, 
we consider $(s,f)\,\hat{\approx}\,(t,g)$ and first show $(r,h)\cdot(s,f)\,\hat{\approx}\,(r,h)\cdot(t,g)$, for all $(r,h)$ post-composable with $(s,f),\,(t,g)$. By hypothesis, we are given morphisms $\check s,\check t, u$ in $\mathcal{S}$ and $\check f,\check g, k$ in $\C$ such that $\langle\check f,\check s\rangle\ddv\langle \check g,\check t\rangle$ and, for the two pullback diagrams below on the right, the diagram on the left commutes.
\begin{center}
		$\xymatrix{&& \ar@/_.5pc/[lld]_{s}\ar[ld]|{\hat{s}}\ar[rd]|{\hat{f}}\ar@/^.7pc/[rrd]^{f} &&\\
			&\ar[l]_{u}&& \ar[r]^{k} & \\
			&& \ar@/^.5pc/[llu]^{t}\ar[lu]|{\hat{t}}\ar[ru]|{\hat{g}}\ar@/_.5pc/[rru]_{g} &&}$
		\hfil
		$\xymatrix{\ar[r]^{\hat{f}}\ar[d]_{\hat{s}} & \ar[d]^{\check s}\\
			\ar[r]_{\check f} &}$
		\hfil
		$\xymatrix{\ar[r]^{\hat{g}}\ar[d]_{\hat{t}} & \ar[d]^{\check t}\\
			\ar[r]_{\check g} & }$
	\end{center}
\noindent	The equalities $k\cdot \hat{f}=f$ and $k\cdot \hat{g}=g$ produce the following commutative diagrams, in which the squares are pullbacks (here $x^*(y)$ denotes a pullback of $y$ along $x$):
\begin{center}
		$\xymatrix{\ar[r]^{\dot{f}}\ar@/^2pc/[rr]^{r^{*}(f)}\ar[d]_{f^{*}(r)} & \ar[d]|{k^{*}(r)}\ar[r]^{r^{*}(k)} & \ar[d]^{r}\\
			\ar[r]_{\hat{f}}\ar@/_2pc/[rr]_{f} & \ar[r]_{k} &}$
		\hfil
		$\xymatrix{\ar[r]^{\dot{g}}\ar@/^2pc/[rr]^{r^{*}(g)}\ar[d]_{g^{*}(r)} & \ar[d]|{k^{*}(r)}\ar[r]^{r^{*}(k)} & \ar[d]^{r}\\
			\ar[r]_{\hat{g}}\ar@/_2pc/[rr]_{g} & \ar[r]_{k} &}$
	\end{center}	
	With the pullback diagrams on the right, it is easy to see that the following diagram on the left commutes:
	\begin{center}
		$\xymatrix{&& \ar@/_.5pc/[lld]_{s\cdot f^{*}(r)}\ar[ld]|{\hat{s}\cdot f^{*}(r)}\ar[rd]|{\dot{f}}\ar@/^.7pc/[rrd]^{h\cdot r^{*}(f)} &&\\
			&\ar[l]_{u}&& \ar[r]^{h\cdot r^{*}(k)\ \ } & \\
			&& \ar@/^.5pc/[llu]^{t\cdot g^{*}(r)}\ar[lu]|{\hat{t}\cdot g^{*}(r)}\ar[ru]|{\dot{g}}\ar@/_.5pc/[rru]_{h\cdot r^{*}(g)} &&}$
		\hfil
		$\xymatrix{\ar[r]^{f^{*}(r)}\ar[d]_{\dot{f}} & \ar[d]_{\hat{f}}\ar[r]^{\hat{s}} & \ar[d]^{\check f}\\
			\ar[r]_{k^{*}(r)} & \ar[r]_{\check s} &}$
		\hfil
		$\xymatrix{\ar[r]^{g^{*}(r)}\ar[d]_{\dot{g}} & \ar[d]_{\hat{g}}\ar[r]^{\hat{t}} & \ar[d]^{\check g}\\
			\ar[r]_{k^{*}(r)} & \ar[r]_{\check t} &}$
	\end{center}
\noindent	From $\langle \check f,\check s\rangle \ddv\langle \check g,\check t\rangle$, using invariance of $\dv$ under horizontal whiskering, we now obtain $\langle \check f,\check s\cdot k^{*}(r)\rangle\ddv\langle \check g,\check t\cdot k^{*}(r)\rangle$. So, $$(s\cdot f^{*}(r),h\cdot r^{*}(f))\,\hat{\approx}\,(t\cdot g^{*}(r), h\cdot r^{*}(g)),$$ as desired.

 	The proof that $\hat{\approx}$ is also preserved by pre-composition (rather than post-composition) in $\sf{Span(\C,\S)}$ proceeds very similarly.
	\end{proof}

\begin{definition}\label{weak partial} 
We denote the associated $\S$-span relation of the $\S$-cospan relation $\eqslantless$ (of {\em Definition \ref{amalgamated}(1)}) by $\approx_{\rm a}$ and let $\sim_{\rm a}$ denote the equivalence relation generated by $\approx_{\rm a}$; it is given by the symmetric and transitive hull of the compatible relation $\approx_{\rm a}a$, and $\sim_{\rm{a}}$ is therefore compatible as well. Writing simply {\rm a} when $\sim_{\rm{a}}$ is used as an index, we call the quotient category
$$\sf {Sect(\C,\S)}:=\sf{Span}_{\rm{a}}(\C,\S)$$
the $\S${\em-sectionable span category of} $\C$.
It comes with the functors
$$\Phi_{\rm{a}}:\C\longrightarrow\; \sf{Sect}(\C,\S)\;\longleftarrow\S^{\rm{op}}:\Psi_{\rm{a}}$$
$$(f:D\to B)\longmapsto [1_D,f]_{\rm{a}}\quad\quad [s,1_D]_{\rm{a}} \longleftarrow\!\shortmid (A\leftarrow D:s).$$
\end{definition}
Here is the first key property of these functors:

 \begin{lemma}\label{section}
$\Psi_{\rm{a}}s\cdot \Phi_{\rm{a}} s=1$, for every $s\in\S$.
\end{lemma}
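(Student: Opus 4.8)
The plan is to compute the composite $\Psi_{\rm a}s\cdot\Phi_{\rm a}s$ explicitly as a single $\S$-span, and then to show that this span is $\sim_{\rm a}$-equivalent to the identity span. Fix $s\colon A\to B$ in $\S$. By Definition \ref{weak partial} one has $\Phi_{\rm a}s=[1_A,s]_{\rm a}\colon A\to B$ and $\Psi_{\rm a}s=[s,1_A]_{\rm a}\colon B\to A$, so I first form their horizontal composite $(s,1_A)\cdot(1_A,s)$. Following the composition rule of Section 2, this is obtained by pulling back the right leg $s$ of $(1_A,s)$ against the left leg $s$ of $(s,1_A)$ --- which is precisely the kernel pair $(p_1,p_2)$ of $s$ --- and the formula $(t,g)\cdot(s,f)=(s\cdot t',g\cdot f')$ then collapses (both outer legs being identities) to give $\Psi_{\rm a}s\cdot\Phi_{\rm a}s=[p_1,p_2]_{\rm a}$, where $(p_1,p_2)\colon A\to A$ is the kernel-pair span of $s$.

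It then remains to prove $[p_1,p_2]_{\rm a}=[1_A,1_A]_{\rm a}=1_A$. For this I would exhibit the two relevant spans as pullbacks of a related pair of $\S$-cospans. Consider the cospans $\langle 1_A,1_A\rangle$ and $\langle s,s\rangle$, both from $A$ to $A$. The morphism $v:=s$ satisfies $v\cdot 1_A=s$ and lies in $\S$, so it is a cospan morphism $\langle 1_A,1_A\rangle\to\langle s,s\rangle$ in $\S$; hence $\langle 1_A,1_A\rangle\eqslantless\langle s,s\rangle$ by Definition \ref{amalgamated}(1). Taking pullbacks, ${\rm pb}\langle 1_A,1_A\rangle=(1_A,1_A)$ (the pullback of a pair of identities), while ${\rm pb}\langle s,s\rangle=(p_1,p_2)$ is exactly the kernel pair of $s$.

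By the very definition of the associated $\S$-span relation (Definition \ref{amalgamated}(2)), applied to the related cospans $\langle 1_A,1_A\rangle\eqslantless\langle s,s\rangle$, I obtain $(1_A,1_A)\approx_{\rm a}(p_1,p_2)$, and therefore $(1_A,1_A)\sim_{\rm a}(p_1,p_2)$. Consequently $[p_1,p_2]_{\rm a}=[1_A,1_A]_{\rm a}$, which is the identity on $A$ in ${\sf Sect}(\C,\S)$, giving $\Psi_{\rm a}s\cdot\Phi_{\rm a}s=1$.

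The one genuinely careful point --- and essentially the only place the argument could go astray --- is the bookkeeping in the first step: one must confirm that composing $(1_A,s)$ with $(s,1_A)$ really pulls $s$ back against itself, so that the apex of the composite is the kernel pair and its two legs become the two kernel-pair projections, unaffected by the identity legs. Everything else is a direct appeal to the definitions; in particular, the full explicit description in Proposition \ref{associated span relation} is not needed here, since the defining implication of $\approx_{\rm a}$ in Definition \ref{amalgamated}(2) already supplies the required equivalence.
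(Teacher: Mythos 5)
Your proof is correct and follows essentially the same route as the paper: both rest on the cospan inequality $\langle 1,1\rangle\eqslantless\langle s,s\rangle$ (witnessed by $s$ itself) together with the defining implication of the associated relation in Definition \ref{amalgamated}(2), which makes the kernel-pair span $\approx_{\rm a}$-equivalent to the identity span. The only cosmetic difference is that you identify $\Psi_{\rm a}s\cdot\Phi_{\rm a}s$ with the kernel-pair span by directly computing the span composite, whereas the paper invokes the Beck--Chevalley property of $(\Phi_{\rm a},\Psi_{\rm a})$ --- a property that was itself verified in Section 2 by exactly this pullback computation.
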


 \begin{proof}
Trivially $\langle 1,1\rangle\eqslantless\langle s,s\rangle$.
Consequently, for the kernel pair $(u,v)$ of $s$, $(u,v)\approx_{\rm{a}}(1,1)$ follows, so that $\Phi_{\rm{a}} v\cdot \Psi_{\rm{a}} u=1$. Since, by the Beck-Chevalley property,
$\Psi_{\rm{a}} s\cdot \Phi_{\rm{a}} s=\Phi_{\rm{a}} v\cdot \Psi_{\rm{a}} u$, this completes the proof.
\end{proof}

 We can now prove that $(\Phi_{\rm{a}}, \Psi_{\rm{a}})$ is universal with respect to the identity shown in Lemma \ref{section}:

 \begin{theorem}\label{weak partial universal}
Any pair of functors $F:\C\longrightarrow\mathcal{D},\; G:\S^{\rm{op}}\longrightarrow\mathcal{D}$ satisfying the Beck-Chevalley property and the identity $Gs\cdot Fs=1\;(s\in\S)$ factors as $F=H\Phi_{\rm a},\, G=H\Psi_{\rm a}$, with a uniquely determined functor $H$, as in
\begin{center}
			$\xymatrix{\C\ar[rr]^{\Phi_{\rm a}}\ar[rrd]_{F} && \sf{Sect}(\C,\S)			\ar[d]_{H} && \mathcal S^{\rm{op}}\ar[ll]_{\Psi_{\rm a}}\ar[lld]^{G}\\
			&& \mathcal D &&}$
\end{center}
\end{theorem}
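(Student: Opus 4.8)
The plan is to follow the template of the proof of Proposition~\ref{connectivity universal}: reduce the asserted universal property to a single consistency check and then invoke Proposition~\ref{BC}. By Proposition~\ref{BC}, every $\sim_{\rm a}$-consistent pair $(F,G)$ satisfying the Beck--Chevalley property factors uniquely as $F=H\Phi_{\rm a}$, $G=H\Psi_{\rm a}$; so it suffices to show that the Beck--Chevalley property together with the identities $Gs\cdot Fs=1\ (s\in\S)$ forces $(F,G)$ to be $\sim_{\rm a}$-consistent. Because $\sim_{\rm a}$ is the equivalence relation generated by $\approx_{\rm a}$, and the target condition ``$Ff\cdot Gs=F\tilde f\cdot G\tilde s$'' is visibly symmetric and transitive (it is an equation in $\D$), it is enough to verify it for a single generating pair $(s,f)\approx_{\rm a}(t,g)$.

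For such a pair I would use the explicit presentation of $\approx_{\rm a}$ provided by Proposition~\ref{associated span relation}: there are $u\in\S$, $k$ in $\C$ and $\S$-cospans with $\langle\check f,\check s\rangle\eqslantless\langle\check g,\check t\rangle$, fitting into the displayed diagram, so that $s=u\cdot\hat s$, $f=k\cdot\hat f$, $t=u\cdot\hat t$, $g=k\cdot\hat g$, where $(\hat s,\hat f)$ and $(\hat t,\hat g)$ arise from the two pullback squares against $\check s$ and $\check t$. Expanding and using functoriality of $F$ together with the contravariant functoriality of $G$ gives $Ff\cdot Gs=Fk\cdot(F\hat f\cdot G\hat s)\cdot Gu$ and $Fg\cdot Gt=Fk\cdot(F\hat g\cdot G\hat t)\cdot Gu$, so the claim reduces to the identity $F\hat f\cdot G\hat s=F\hat g\cdot G\hat t$. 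Applying the Beck--Chevalley property to the two pullback squares (whose right-hand legs $\check s,\check t$ lie in $\S$) turns this, in turn, into $G\check s\cdot F\check f=G\check t\cdot F\check g$.

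Finally I would invoke the relation $\langle\check f,\check s\rangle\eqslantless\langle\check g,\check t\rangle$, which supplies a morphism $v\in\S$ with $\check g=v\cdot\check f$ and $\check t=v\cdot\check s$; substituting and then cancelling via the section hypothesis $Gv\cdot Fv=1$ yields $G\check t\cdot F\check g=G\check s\cdot Gv\cdot Fv\cdot F\check f=G\check s\cdot F\check f$, as required. The uniqueness of $H$ is then immediate from the forced formula $H[s,f]=Ff\cdot Gs$, exactly as in Proposition~\ref{BC}. I expect the only real obstacle to be organizational rather than computational: one must recognize that the whole statement collapses to $\sim_{\rm a}$-consistency, and then keep careful track of the variance of $G$ and of the precise way the comparison morphism $v$ interacts with the two pullback squares, so that the Beck--Chevalley rewriting and the section identity combine in the right order. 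Everything else is the routine bookkeeping already rehearsed in the retractable case.
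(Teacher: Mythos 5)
Your proposal is correct and follows essentially the same route as the paper's own proof: reduce to $\sim_{\rm a}$-consistency via Proposition \ref{BC}, check it on the generating relation $\approx_{\rm a}$ using the explicit description from Proposition \ref{associated span relation}, apply Beck--Chevalley to the two pullback squares to pass from $(\hat s,\hat f),(\hat t,\hat g)$ to $(\check s,\check f),(\check t,\check g)$, and close with the insertion of $Gv\cdot Fv=1$ for the comparison morphism $v\in\S$. The paper performs exactly this computation in a single chain of equalities, so the two arguments coincide step for step.
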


 \begin{proof} After Proposition \ref{BC} it suffices to show that the pair $(F,G)$ is necessarily $\sim_{\rm{a}}$-consistent. Hence we consider $(s,f)\approx_{\rm{a}}(g,t)$ and obtain (as in the proof of Proposition \ref{associated span relation}) the set of commutative diagrams
\begin{center}
		$\xymatrix{&& \ar@/_.5pc/[lld]_{s}\ar[ld]|{\hat{s}}\ar[rd]|{\hat{f}}\ar@/^.7pc/[rrd]^{f} &&\\
			&\ar[l]_{u}&& \ar[r]^{k} & \\
			&& \ar@/^.5pc/[llu]^{t}\ar[lu]|{\hat{t}}\ar[ru]|{\hat{g}}\ar@/_.5pc/[rru]_{g} &&}$
		\hfil
		$\xymatrix{\ar[r]^{\hat{f}}\ar[d]_{\hat{s}} & \ar[d]^{\check s}\\
			\ar[r]_{\check f} &}$
		\hfil
		$\xymatrix{\ar[r]^{\hat{g}}\ar[d]_{\hat{t}} & \ar[d]^{\check t}\\
			\ar[r]_{\check g} & }$
	\end{center}

 \noindent where now $\langle\check{f},\check{s}\rangle \eqslantless \langle\check{g},\check{t}\rangle$. This gives us, in addition, a commutative diagram
\begin{center}
		$\xymatrix{& \ar[dd]_{v} & \\
			\ar[ru]^{\check{f}}\ar[rd]_{\check{g}} & &\ar[lu]_{\check{s}}\ar[ld]^{\check{t}}\\
			& &}$
	\end{center}
	with $v\in\S$. By hypothesis then, $Gv\cdot Fv = 1$. Furthermore,
the above pullback squares and the Beck-Chevalley property give us $F\hat f\cdot G\hat s=G\check s\cdot F\check f$ and $F\hat g\cdot G\hat t=G\check t\cdot F\check g$.
Applying $F$ to $v\cdot\check{f}=\check{g}$ and $G$ to $v\cdot\check{s}=\check{t}$ we then obtain

 \begin{tabular}{ll}
	$Ff\cdot Gs$&$=Fk\cdot F\hat f\cdot G\hat s\cdot Gu=Fk\cdot G\check s\cdot F\check f\cdot Gu$\\
	&$=Fk\cdot G\check s\cdot Gv\cdot Fv\cdot F\check f\cdot Gu=Fk\cdot G\check t \cdot F\check g\cdot Gu$\\
	&$=Fk\cdot F\hat g\cdot G\hat t\cdot Gu$\\&$=Fg\cdot Gt$.\\
\end{tabular}	

 \end{proof}
Let us note immediately that the considerable effort in forming the relation $\sim_{\rm{a}}$ pays off only when $\S$ is not restricted to containing only monomorphisms of $\C$. Indeed, otherwise our construction returns just the category ${\sf Span(\C,\S)}$, studied as the $\S${\em -partial map category} of $\C$ by various authors (see, for example, \cite{homi}):

 \begin{corollary} \label{WPar in mono case}
When $\S$ is a class of monomorphisms,
$(s,f)\sim_{\rm{a}}(t,g)$ just means that the two $\S$-spans are isomorphic. In other words, if $\S$ contains only monomorphisms of $\C$, then $\sf{Sect}(\C,\S)\cong\sf{Span}(\C,\S)$ is (isomorphic to) the $\S$-partial map category of $\C$.
\end{corollary}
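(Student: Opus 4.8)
The plan is to show that, under the standing hypothesis that every $s\in\S$ is monic, the relation $\approx_{\rm a}$ of Definition \ref{weak partial} already coincides with the relation of being \emph{vertically isomorphic}, which I denote by $\cong$. Since $\sim_{\rm a}$ is the equivalence relation generated by $\approx_{\rm a}$, and since $\cong$ is itself already an equivalence relation, the identity $\approx_{\rm a}\,=\,\cong$ will immediately yield $\sim_{\rm a}\,=\,\cong$; as ${\sf Span}(\C,\S)$ is formed from isomorphism classes of $\S$-spans to begin with, this gives ${\sf Sect}(\C,\S)={\sf Span}_{\rm a}(\C,\S)\cong{\sf Span}(\C,\S)$, the latter being by convention the $\S$-partial map category. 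The inclusion $\cong\,\subseteq\,\approx_{\rm a}$ is free of charge: it is exactly the invariance of $\approx_{\rm a}$ under vertical isomorphism established in the proof of Proposition \ref{associated span relation}. So the entire content lies in the reverse inclusion $\approx_{\rm a}\,\subseteq\,\cong$.

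For that I would invoke the explicit description of $\approx_{\rm a}$ furnished by Proposition \ref{associated span relation} (applied to the cospan relation $\eqslantless$). Assuming $(s,f)\approx_{\rm a}(t,g)$, we are handed morphisms $u\in\S$ and $k$ in $\C$, together with $\S$-cospans satisfying $\langle\check f,\check s\rangle\eqslantless\langle\check g,\check t\rangle$, the two pullback squares exhibiting $D=U\times_P K$ (with projections $\hat s,\hat f$) and $E=U\times_Q K$ (with projections $\hat t,\hat g$), and the large commutative diagram, which encodes $s=u\hat s$, $f=k\hat f$, $t=u\hat t$, $g=k\hat g$. Unravelling $\langle\check f,\check s\rangle\eqslantless\langle\check g,\check t\rangle$ produces a morphism $v\in\S$ with $v\check f=\check g$ and $v\check s=\check t$; this $v$ is a monomorphism, which is precisely where the hypothesis on $\S$ will enter.

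The heart of the argument is to promote these data to a vertical isomorphism between $(s,f)$ and $(t,g)$. First, post-composing the defining pullback identity $\check f\hat s=\check s\hat f$ of $D$ with $v$ gives $\check g\hat s=\check t\hat f$, so the universal property of $E=U\times_Q K$ yields a unique $\varphi\colon D\to E$ with $\hat t\varphi=\hat s$ and $\hat g\varphi=\hat f$; note this step needs no cancellation. For the reverse map, the defining identity $\check g\hat t=\check t\hat g$ of $E$ reads $v\check f\hat t=v\check s\hat g$, and here I would cancel the monomorphism $v$ to obtain $\check f\hat t=\check s\hat g$; the universal property of $D=U\times_P K$ then yields $\psi\colon E\to D$ with $\hat s\psi=\hat t$ and $\hat f\psi=\hat g$. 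By the uniqueness clauses of the two universal properties, $\varphi$ and $\psi$ are mutually inverse, so $\varphi$ is an isomorphism in $\C$. Finally $t\varphi=u\hat t\varphi=u\hat s=s$ and $g\varphi=k\hat g\varphi=k\hat f=f$, so $\varphi\colon(s,f)\to(t,g)$ is exactly a vertical isomorphism of $\S$-spans; that is, $(s,f)\cong(t,g)$.

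I expect the single cancellation of $v$ to be the only place where monomorphy is genuinely used, and hence the crux of the matter: without it the comparison map $\varphi$ still exists as a span morphism $(s,f)\to(t,g)$, but there is no reason for it to be invertible, which is exactly why $\approx_{\rm a}$ can be strictly coarser than $\cong$ in the general, non-monic case. Granting this cancellation, the two inclusions combine to $\approx_{\rm a}\,=\,\cong$, and the reduction carried out in the first paragraph then delivers ${\sf Sect}(\C,\S)\cong{\sf Span}(\C,\S)$, completing the proof.
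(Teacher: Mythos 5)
Your proof is correct, but it takes a genuinely different route from the paper's. The paper argues abstractly: when every $s\in\S$ is monic, its kernel pair is trivial, so the canonical pair $(\Phi,\Psi)$ into ${\sf Span}(\C,\S)$ itself satisfies $\Psi s\cdot\Phi s=1$; Theorem \ref{weak partial universal} (the universal property of ${\sf Sect}(\C,\S)$) then produces a functor ${\sf Sect}(\C,\S)\to{\sf Span}(\C,\S)$, $[s,f]_{\rm a}\mapsto(s,f)$, which is trivially inverse to the quotient functor $(s,f)\mapsto[s,f]_{\rm a}$, giving the isomorphism of categories at one stroke. You instead work directly with the explicit description of $\approx_{\rm a}$ from Proposition \ref{associated span relation}: the cospan witness $v\in\S$ (now monic) lets you compare the two pullbacks $D$ and $E$ over $U$ and $K$, and your single cancellation of $v$ is indeed the only place monomorphy enters; the induced maps $\varphi$ and $\psi$ are mutually inverse by the uniqueness clauses of the two pullbacks, and the final verification $t\varphi=s$, $g\varphi=f$ is sound, so $\approx_{\rm a}\,=\,\cong$ and hence $\sim_{\rm a}\,=\,\cong$. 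What the paper's argument buys is economy: it recycles machinery already in place (Proposition \ref{BC} and Theorem \ref{weak partial universal}) and never re-enters the combinatorics of Definition \ref{amalgamated}. What yours buys is sharper, local information: it shows that the one-step relation $\approx_{\rm a}$, not merely its generated equivalence $\sim_{\rm a}$, already coincides with vertical isomorphism, and it isolates precisely why the collapse fails in the non-monic case --- the comparison map $\varphi$ always exists as a span morphism, but without cancellability of $v$ there is no reason for it to be invertible. (The paper's statement also yields $\approx_{\rm a}\,=\,\cong$ a posteriori, via $\cong\,\subseteq\,\approx_{\rm a}\,\subseteq\,\sim_{\rm a}\,=\,\cong$, but your proof exhibits the mechanism.)
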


 \begin{proof}
 For the pair $(\Phi:\C\longrightarrow\; {\sf Span}(\C,\S)\;\longleftarrow\S^{\rm op}:\Psi)$ one trivially has $\Psi s\cdot\Phi s=1$ for every monomorphism $s\in\S$,
 so that Theorem \ref{weak partial universal} gives us the functor
$$\sf{Sect}(\C,\S)\longrightarrow\sf{Span}(\C,\S),\quad[s,f]_{\rm{a}}\mapsto[s,f]_{\cong}=(s,f),$$
which is trivially inverse to $[s,f]_{\cong}\mapsto [s,f]_{\rm{a}}$.
\end{proof}

 \begin{remark}\label{Par trivial}
It is to be expected that the largest class $\S$ possible, namely $\S={\rm Mor}(\C)$, will render ${\sf Sect}(\C,\S)$ trivial. Concretely, it is easy to see that, similar to {\em Remark \ref{trivial retractive}}, one has:
{\em If $\C$ has disjoint finite coproducts (so that the pullback of two distinct coproduct injections is given by the initial object), then ${\sf Sect}(\C,{\rm Mor}(\C))$ is equivalent to the terminal category.}
In fact, since for all spans $(s,f):A\to B$ one has $\langle\nu_1,\nu_2\rangle\eqslantless\langle f,1_B\rangle$ (with coproduct injections $\nu_1,\nu_2$), the following diagrams show $[s,f]_{\rm a}=[!^A,!^B]_{\rm a}$:
\begin{center}
		$\xymatrix{&& D\ar[lld]_{s}\ar[ld]^{1_D}\ar[rd]_{f}\ar[rrd]^{f} &&\\
			A &D\ar[l]_{s}&& B\ar[r]^{1_B} & B\\
			&& 0\ar[llu]^{!^A}\ar[lu]_{!^D}\ar[ru]^{!^B}\ar[rru]_{!^B} &&}$
		\hfil
		$\xymatrix{D\ar[r]^{f}\ar[d]_{1_D} & B\ar[d]^{1_B}\\
			D\ar[r]_{f} & B}$
		\hfil
		$\xymatrix{0\ar[r]^{!^B}\ar[d]_{!^D} & B\ar[d]^{\nu_2}\\
			D\ar[r]_{\nu_1} & {D+B}}$	
	\end{center}
\end{remark}

To illustrate the construction of ${\sf Sect}(\C,\S)$ further, we end this section by showing that also for certain quite small classes $\S$ will ${\sf Sect}(\C,\S)$ be trivial, provided that $\S$ contains some morphisms that are typically {\em epimorphisms} in $\C$. To this end we call the category $\C$ {\em strictly connected} if for all objects $A$ the hom-functor $\C(A,-):\C\to{\sf Set}$ reflects strict initial objects (see Remark \ref{trivial retractive}). As $\emptyset$ is strict initial in $\sf Set$, this means that, for all objects $A,B$ the hom-set $\C(A,B)$ may be empty only if $B$ is strict initial in $\C$. (Note however, that we do not impose an existence assumption for a strict initial object in $\C$ when $\C$ is strictly connected!) Every pointed category is trivially strictly connected, but also non-pointed categories like $\sf Set, Ord, Cat, Top, ..., $ (all with strict initial object $\emptyset$) are strictly connected.

 \begin{theorem}\label{Par is 1}
Let $\C$ have a terminal object $1$ and be strictly connected, and let the class $S$ contain
the morphisms $!_A:A\to1$, for all objects $A$ that are not strict initial.
Then all hom-sets of ${\sf Sect}(\C,S)$ contain only either one or two morphisms; they are all singletons when $\C$ has no strict initial object, in which case ${\sf Sect}(\C,S)$
is equivalent to the terminal category $\sf1$.
\end{theorem}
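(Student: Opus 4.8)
The plan is to prove the single structural fact that, for every object $X$ which is \emph{not} strict initial, the morphism $\Phi_{\rm a}!_X\colon X\to 1$ is an \emph{isomorphism} in ${\sf Sect}(\C,\S)$; once this is in hand, the entire hom-count follows by transporting computations into the endomorphism monoid of $1$.

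First I would exhibit the two one-sided inverses of $\Phi_{\rm a}!_X$. Since $!_X\in\S$, Lemma~\ref{section} gives $\Psi_{\rm a}!_X\cdot\Phi_{\rm a}!_X=1_X$, so $\Psi_{\rm a}!_X$ is a retraction. For a section I would invoke strict connectedness: as $X$ is not strict initial, $\C(1,X)\neq\emptyset$, so there is a morphism $d\colon 1\to X$, and functoriality of $\Phi_{\rm a}$ together with $!_X\cdot d=1_1$ yields $\Phi_{\rm a}!_X\cdot\Phi_{\rm a}d=1_1$. A morphism carrying both a left and a right inverse is invertible with the two coinciding, so $\Phi_{\rm a}!_X$ is an isomorphism with inverse $\Psi_{\rm a}!_X=\Phi_{\rm a}d$, whence $X\cong 1$ in ${\sf Sect}(\C,\S)$. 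The main obstacle, and really the whole point of the proof, is exactly this step: recognising that the hypotheses furnish a \emph{global point} $1\to X$ serving as the missing section, so that the universally available retraction of Lemma~\ref{section} upgrades to a two-sided inverse. This is the only place the strict-connectedness assumption enters; everything afterwards is formal.

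Next I would exploit that all non-strict-initial objects become isomorphic to $1$. For such $A,B$, conjugation by the isomorphisms $\Phi_{\rm a}!_A,\Phi_{\rm a}!_B$ gives a bijection ${\sf Sect}(\C,\S)(A,B)\cong{\sf Sect}(\C,\S)(1,1)=:M$, reducing the count to that of $M$. Every $\S$-span $1\to 1$ has the form $(!_E,!_E)$ for an object $E$ admitting $!_E\in\S$, and $[!_E,!_E]_{\rm a}=\Phi_{\rm a}!_E\cdot\Psi_{\rm a}!_E$, which equals $1_1$ whenever $E$ is not strict initial (as $\Phi_{\rm a}!_E$ is then invertible with inverse $\Psi_{\rm a}!_E$). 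Since strict initial objects are unique up to isomorphism, the only other possibility is the single class arising from $E$ strict initial, so $M$ has at most two elements: $1_1$ and one possible ``degenerate'' class. Hom-sets with a strict-initial endpoint I would dispose of directly, noting that a strict-initial domain or codomain forces the apex of any span to be strict initial, leaving at most one isomorphism class of spans. Together these show that every hom-set contains at most two morphisms, the second (when present) coming from spans whose apex is strict initial.

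Finally, for the last assertion I would specialise to the case in which $\C$ has no strict initial object. Then every object is non-strict-initial, hence isomorphic to $1$, and the degenerate class above never occurs, so $M=\{1_1\}$ and the conjugation bijections render every hom-set a singleton. A category in which all objects are isomorphic and whose one object has trivial endomorphism monoid is equivalent to the terminal category, giving ${\sf Sect}(\C,\S)\simeq{\sf 1}$.
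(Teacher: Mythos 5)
Your proof is correct, but it takes a genuinely different route from the paper's. The paper argues at the level of the generating relation: given a span $(s,f):A\to B$ with $B$ not strict initial, it first uses the cospan inequality $\langle f,1_B\rangle\eqslantless\langle !_D,!_B\rangle$ to replace $(s,f)$ by $(s\cdot p_1,p_2)$ over the product $D\times B$, and then (when $D\times B$ is not strict initial) uses a morphism $a:A\to D$ supplied by strict connectedness, together with the inequalities $\langle 1_{A\times B},s\times 1_B\rangle\eqslantless\langle !_{A\times B},!_{A\times B}\rangle\eqslantgtr\langle a\times 1_B,!_{D\times B}\rangle$, to land on the canonical representative $(\pi_1,\pi_2)$ over $A\times B$. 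You instead work abstractly inside ${\sf Sect}(\C,\S)$: Lemma~\ref{section} provides the retraction $\Psi_{\rm a}!_X$, strict connectedness provides a global point $d:1\to X$ whose image under $\Phi_{\rm a}$ is a section (this is your key observation, and it is sound: $!_X\cdot d=1_1$ by terminality), so $\Phi_{\rm a}!_X$ is invertible, all non-strict-initial objects collapse onto $1$, and the count reduces to the endomorphism monoid of $1$, which terminality pins down via spans of the form $(!_E,!_E)$. Your approach is more modular and conceptual --- it reuses Lemma~\ref{section} instead of re-deriving relation-level identities, and it explains the collapse structurally --- while the paper's computation buys explicit representatives $[!^A,!^B]_{\rm a}$ and $[\pi_1,\pi_2]_{\rm a}$ for the (at most) two morphisms in each hom-set, which are then quoted verbatim in Remark~\ref{rem on Par} to define $0_{A,B}$ and $1_{A,B}$; if your proof were adopted, that remark would need the representatives extracted by an extra computation (conjugating $1_1$ and the degenerate class back along the isomorphisms). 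One shared caveat: in the case of a strict initial codomain both you and the paper really establish ``at most one'' morphism (the hom-set could be empty if no $\S$-morphism out of $0$ exists), so your phrasing is on equal footing with, if anything slightly more careful than, the paper's summary sentence.
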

\begin{proof}
Consider an $\S$-span $(s:D\to A,\; f:D\to B)$. If $B$ is strict initial, $f$ is necessarily an isomorphism, thus making $D\cong 0$ strict initial as well. Hence, the $S$-spans $(s,f)$ and $(!^A, !^B)$ coincide. If $B$ is not strict initial, we have $!_B\in\S$, so its pullback along $!_D$ exists and gives us the direct product $D\times B$, with projections $p_1, p_2$, where $p_1:D\times B\to D$ is in $\S$ by pullback stability of $\S$. The $\S$-cospan inequality
$\langle f,1_B\rangle\eqslantless\langle!_D,!_B\rangle$ and the commutative diagrams

 \begin{center}
$\xymatrix{&& D\ar[lld]_{s}\ar[ld]^{1_D}\ar[rd]_{f}\ar[rrd]^{f} &&\\
			A &D\ar[l]_{s}&& B\ar[r]^{1_B} & B\\
			&& {D\!\times\! B}\ar[llu]^{s\cdot p_1}\ar[lu]_{p_1}\ar[ru]^{p_2}\ar[rru]_{p_2} &&}$
		\hfil
		$\xymatrix{D\ar[r]^{f}\ar[d]_{1_D} & B\ar[d]^{1_B}\\
			D\ar[r]_{f} & B}$
		\hfil
		$\xymatrix{{D\times B}\ar[r]^{p_2}\ar[d]_{p_1} & B\ar[d]^{!_B}\\
			D\ar[r]_{!_D} & {1}}$
\end{center}			
			
\noindent show $[s,f]_{\rm a}=[s\cdot p_1,p_2]_{\rm a}$. Hence, it suffices to consider the $\S$-span $(s\cdot p_1, p_2)$, with $B$ not strict initial.

 If $D\times B\cong0$ is strict initial, the $S$-span $(s\cdot p_1, p_2)$ must equal $(!^A,!^B)$. If $D\times B$ is not strict initial, $D$ cannot be strict initial either, and we have a morphism $a:A\to D$, by the strict connectedness of $\C$. Also, just as the product $D\times B$ exists, so does the product $A\times B$, with product projections $\pi_1, \pi_2$, where $\pi_1\in\S$, and we can consider the diagrams

 \begin{center}
$\xymatrix{&& {D\!\times\! B}\ar[lld]_{s\cdot p_1}\ar[ld]^{s\!\times\!1_B}\ar[rd]_{1_{D\!\times\!B}}\ar[rrd]^{p_2} &&\\
			A &{A\!\times\! B}\ar[l]_{\pi_1}&& {D\!\times \!B}\ar[r]^{p_2} & B\\
			&& {A\!\times \!B}\ar[llu]^{\pi_1}\ar[lu]_{1_{A\!\times\!B}}\ar[ru]^{a\!\times\!1_B}\ar[rru]_{\pi_2} &&}$
		\hfil
		
		$\xymatrix{D\!\times\!B\ar[r]^{1}\ar[d]_{s\!\times\!1_B} & {D\!\times\!B}\ar[d]^{s\!\times\!1_B}\\
			{A\!\times\!B}\ar[r]_{1} & {A\!\times\!B}}$
		\hfil
		$\xymatrix{{A\!\times\! B}\ar[r]^{a\!\times\!1_B}\ar[d]_{1} & {D\!\times\!B}\ar[d]^{1}\\
			{A\!\times\!B}\ar[r]_{a\!\times\!1_B} & {D\!\times\!B}}$	
\end{center}

 \noindent The morphism $s\!\times\!1_B$ shows that $A\times B$ is, like $D\times B$, not strict initial, so that the morphisms $!_{A\!\times\!B},\,!_{D\!\times\!B}$ both lie in $\S$. This gives the $\S$-cospan inequalities $$\langle1_{A\!\times\!B},s\!\times\!1_B\rangle\eqslantless\langle!_{A\!\times\!B},!_{A\!\times\!B}\rangle\eqslantgtr\langle a\!\times\!1_B,!_{D\!\times\!B}\rangle,$$ which then imply $[s\cdot p_1,p_2]_{\rm a}=[\pi_1,\pi_2]_{\rm a}$.

 In summary: when $B$ is strict initial in $\C$, $[!^A,!^B]_{\rm a}$ is the only morphism $A\to B$ in ${\sf Sect}(\C,S)$; otherwise one may also have the morphism $[\pi_1,\pi_2]_{\rm a}$, but no other.
\end{proof}

 \begin{remark}\label{rem on Par}
{\em (1)} In every category $\C$ with finite products there is a least class $\S$ which satisfies our general hypotheses and contains the morphisms $A\to 1$, for every object $A$ that is not strict initial in $\C$; namely, the class ${\sf Proj}(\C)$ of all morphisms that are either projections of a direct product that is not strict initial, or that are isomorphisms of strict initial objects. Hence, when $\C$ is strictly connected, the assertion of Theorem \ref{Par is 1} applies for $\S={\sf Proj}(\C)$.

 {\em (2) Theorem \ref{Par is 1}} leaves open the question whether, when $\C$ is strictly connected and has finite products and a strictly initial object $0$, the ${\sf Sect}(\C,\S)$-morphisms $$0_{A,B}:=[!^A,!^B]_{\rm a},\,1_{A,B}:=[\pi_1,\pi_2]_{\rm a}:A\to B$$ are actually distinct. For $\S={\sf Proj}(\C)$ it is not difficult to show that, if every object in $C$ is projective with respect to ${\sf Proj}(\C)$, then
$$0_{A,B}=1_{A,B}\Longleftrightarrow A\times B \,{\rm \;strict\; initial}.$$
In particular, for $\C={\sf Set}$ and $\S={\sf Proj(Set)}$, one has $0_{A,B}\neq1_{A,B}$ for all non-empty sets $A,B$.
\end{remark}

 \section{The category $\C[\S^{-1}]$ of fractions}
It is now easy to construct the category $\C[\S^{-1}]$ of fractions with respect to the class $\S$ satisfying our general hypotheses (but {\em not} necessarily being a class of monomorphisms of $\C$) as a quotient category of $\sf{Sect}(\C,\S)$. Recall (\cite{gabzis, bor}) that the category $\C[\S^{-1}]$
is characterized by the admission of a {\em localizing functor} $\C\longrightarrow\C[\S^{-1}]$, universal with the property that it maps morphisms in $\S$ to isomorphisms.

 In order to construct such localizing functor we consider the least equivalence relation $\sim_{\rm{az}}$ for $\S$-spans containing both the zig-zag relation $\sim_{\rm{z}}$ of Definition \ref{retractive} and the associated relation $\sim_{\rm{a}}$ of Definition \ref{weak partial}.
 As both generating relations are compatible, the relation $\sim_{\rm{az}}$ is compatible as well, and we can consider the pair of functors
$$\Phi_{\rm{az}}:\C\longrightarrow\; \sf{Span}_{\rm{az}}(\C,\S)\;\longleftarrow\S^{\rm{op}}:\Psi_{\rm{az}}\quad$$
(defined as in Proposition \ref{BC}) which, by the definition of $\sim_{\rm{az}}$, factors through both $\sf{Span}_{\rm{a}}(\C,\S)$ and $\sf{Span}_{\rm{z}}(\C,\S)$. For all $s\in\S$, this makes $\Phi_{\rm{az}} s$, by Lemma \ref{section} and Proposition \ref{connectivity universal}, both a section and a retraction, and therefore an isomorphism, with $\Psi_{\rm{az}} s$ being its inverse.

 \begin{theorem}\label{fraction}
$ \sf{Span}_{\rm{az}}(\C,\S)$ is (a model of) the category $\C[\S^{-1}]$ of fractions with respect to the class $\S$, with localizing functor $\Phi_{\rm{az}}$.
\end{theorem}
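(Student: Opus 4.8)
The plan is to verify that $\Phi_{\rm{az}}:\C\to \sf{Span}_{\rm{az}}(\C,\S)$ has exactly the universal property defining the category of fractions: it inverts every $\S$-morphism, and every functor out of $\C$ with that property factors uniquely through it. The first requirement has already been settled in the paragraph preceding the statement, where $\Phi_{\rm{az}} s$ was shown to be an isomorphism with inverse $\Psi_{\rm{az}} s$; so the real work is to establish the universal factorization.

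Suppose $F:\C\to\D$ is any functor sending each $s\in\S$ to an isomorphism. I would define $G:\S^{\rm op}\to\D$ to agree with $F$ on objects and to be given on morphisms by $Gs:=(Fs)^{-1}$ for $s\in\S$; this is clearly functorial, and it satisfies both identities $Fs\cdot Gs=1$ and $Gs\cdot Fs=1$. The one point requiring care is the Beck--Chevalley property of $(F,G)$. Given a pullback square as in {\em Definition \ref{BC property}} with $s\in\S$, its other vertical leg $s'$ again lies in $\S$ by pullback-stability, so $Fs'$ is invertible too; applying $F$ to $s\cdot f'=f\cdot s'$ yields $Fs\cdot Ff'=Ff\cdot Fs'$, and rearranging with the inverses gives $Ff'\cdot Gs'=Gs\cdot Ff$, which is precisely the required commutativity. (This is the forward implication of {\em Lemma \ref{BC Mono Lemma}}, whose argument uses neither the mono hypothesis nor the uniqueness of $G$.)

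With the Beck--Chevalley property in hand, {\em Proposition \ref{connectivity universal}} (via $Fs\cdot Gs=1$) shows that $(F,G)$ is $\sim_{\rm z}$-consistent, while {\em Theorem \ref{weak partial universal}} (via $Gs\cdot Fs=1$) shows that it is $\sim_{\rm a}$-consistent. Since $\sim_{\rm{az}}$ is by definition the least equivalence relation containing $\sim_{\rm z}$ and $\sim_{\rm a}$, and since the assignment $(s,f)\mapsto Ff\cdot Gs$ is already constant on both generating relations, its kernel is an equivalence relation containing both and hence containing $\sim_{\rm{az}}$; thus $(F,G)$ is $\sim_{\rm{az}}$-consistent. {\em Proposition \ref{BC}} then produces a unique functor $H:\sf{Span}_{\rm{az}}(\C,\S)\to\D$ with $H\Phi_{\rm{az}}=F$ and $H\Psi_{\rm{az}}=G$, necessarily given by $H[s,f]_{\rm{az}}=Ff\cdot Gs$.

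Finally I would confirm the uniqueness demanded by the fraction property, namely uniqueness of a functor $H'$ subject {\em only} to $H'\Phi_{\rm{az}}=F$ (with no constraint on $G$ imposed a priori). Because $\Psi_{\rm{az}} s=(\Phi_{\rm{az}} s)^{-1}$ for every $s\in\S$, any such $H'$ is forced to satisfy $H'\Psi_{\rm{az}} s=(H'\Phi_{\rm{az}} s)^{-1}=(Fs)^{-1}=Gs$, whence $H'[s,f]_{\rm{az}}=H'\Phi_{\rm{az}} f\cdot H'\Psi_{\rm{az}} s=Ff\cdot Gs=H[s,f]_{\rm{az}}$, so $H'=H$. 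The only genuinely new verification in this whole argument is the Beck--Chevalley computation, where pullback-stability of $\S$ is essential; everything else simply amalgamates the section and retraction universal properties already proved, which is why no substantial obstacle arises here.
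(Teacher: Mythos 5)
Your proposal is correct and follows essentially the same route as the paper's own proof: define $G s=(Fs)^{-1}$, observe that $(F,G)$ satisfies the Beck--Chevalley property and is $\sim_{\rm z}$- and $\sim_{\rm a}$-consistent (hence $\sim_{\rm az}$-consistent) by Proposition \ref{connectivity universal} and Theorem \ref{weak partial universal}, and then invoke Proposition \ref{BC}. The only differences are that you spell out two points the paper treats as immediate --- the Beck--Chevalley computation (which the paper calls trivial) and the fact that uniqueness of $H$ subject only to $H\Phi_{\rm az}=F$ follows because $H\Psi_{\rm az}s$ is forced to be $(Fs)^{-1}$ --- both of which are worthwhile clarifications rather than deviations.
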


 \begin{proof}
It just remains to be shown that any functor $F:\C\longrightarrow\mathcal{D}$ which maps every $s\in\S$ to an isomorphism factors uniquely through $\Phi_{\rm{az}}$.
By Theorem \ref{weak partial universal} and Corollary \ref{connectivity universal}, with
$G:\S^{\rm{op}}\longrightarrow \mathcal D, \; s\mapsto (Fs)^{-1},$
we obtain a pair $(F,G)$ that is both $\sim_{\rm{a}}$- and $\sim_{\rm{z}}$-consistent and therefore also $\sim_{\rm{az}}$-consistent. Since it trivially satisfies the Beck-Chevalley property, Proposition \ref{BC} produces the unique factorization of $F$ through $\Phi_{\rm{az}}$, given by
$${\sf{Span}}_{\rm{az}}(\C,\S)\longrightarrow \mathcal D,\quad [s,f]_{\rm{az}}\mapsto Ff\cdot (Fs)^{-1}.$$
\end{proof}

As a consequence, whenever ${\sf Retr}(\C,\S)$ or ${\sf Sect}(\C,\S)$ is trivial, {\em i.e,} equivalent to {\sf 1}, so is $\C[\S^{-1}]$. In particular, from Theorem \ref{Par is 1} we conclude: 

 \begin{corollary}\label{trivial fraction}
If $\C$ is strictly connected and contains a terminal object $1$, but does not contain a strict initial object, and if $\S$ contains all morphisms with codomain $1$, then $\C[\S^{-1}]$ is equivalent to the terminal category $\sf 1$.
\end{corollary}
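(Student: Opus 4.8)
The plan is to combine Theorem~\ref{fraction}, which identifies $\C[\S^{-1}]$ with $\sf{Span}_{\rm{az}}(\C,\S)$, with the triviality result for ${\sf Sect}(\C,\S)$ established in Theorem~\ref{Par is 1}. The key observation is that $\sim_{\rm{az}}$ is, by definition, the least equivalence relation containing both $\sim_{\rm z}$ and $\sim_{\rm a}$; in particular $\sim_{\rm{az}}$ contains $\sim_{\rm a}$, so $\sf{Span}_{\rm{az}}(\C,\S)$ is a quotient of $\sf{Span}_{\rm a}(\C,\S)={\sf Sect}(\C,\S)$. Since a quotient of a category equivalent to $\sf 1$ is again equivalent to $\sf 1$ (all hom-sets remain singletons, and there is at least one morphism between any two objects), it suffices to verify that the hypotheses here imply those of Theorem~\ref{Par is 1} and that no strict initial object is present.

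First I would check the hypotheses of Theorem~\ref{Par is 1}. We are told $\C$ is strictly connected, has a terminal object $1$, and that $\S$ contains all morphisms with codomain $1$; in particular it contains every $!_A:A\to 1$, including those with $A$ not strict initial, which is exactly what Theorem~\ref{Par is 1} requires. Since $\C$ has no strict initial object, Theorem~\ref{Par is 1} yields directly that ${\sf Sect}(\C,\S)$ is equivalent to $\sf 1$: all its hom-sets are singletons.

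Next I would invoke the remark made in the text immediately before the corollary, namely that whenever ${\sf Sect}(\C,\S)$ is trivial, so is $\C[\S^{-1}]$. Concretely, the localizing functor $\C\to\C[\S^{-1}]=\sf{Span}_{\rm{az}}(\C,\S)$ factors through ${\sf Sect}(\C,\S)=\sf{Span}_{\rm a}(\C,\S)$ because $\sim_{\rm a}\subseteq\sim_{\rm{az}}$, so the canonical comparison functor ${\sf Sect}(\C,\S)\to\C[\S^{-1}]$ is a full and bijective-on-objects quotient. A quotient of a category all of whose hom-sets are singletons again has all hom-sets singletons (fullness ensures each is nonempty, the quotient property ensures each has at most one element). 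Hence every hom-set of $\C[\S^{-1}]$ is a singleton, which is precisely the statement that $\C[\S^{-1}]$ is equivalent to $\sf 1$.

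I expect no serious obstacle here, as the corollary is essentially a direct consequence of the two theorems already proved; the only point requiring a line of care is the claim that passing to the quotient $\sf{Span}_{\rm{az}}$ from $\sf{Span}_{\rm a}$ cannot enlarge hom-sets but only collapse them, so that triviality is inherited rather than destroyed. This follows immediately from the fact that $\sim_{\rm{az}}$ is a coarser equivalence relation than $\sim_{\rm a}$, so the quotient functor is surjective on each hom-set. Thus the corollary reduces cleanly to the conjunction of Theorem~\ref{Par is 1} and Theorem~\ref{fraction}.
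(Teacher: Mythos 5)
Your proposal is correct and follows exactly the paper's route: the paper likewise deduces the corollary from Theorem~\ref{Par is 1} together with the observation (made just before the corollary) that $\C[\S^{-1}]=\sf{Span}_{\rm az}(\C,\S)$ is a quotient of ${\sf Sect}(\C,\S)=\sf{Span}_{\rm a}(\C,\S)$ because $\sim_{\rm a}\,\subseteq\,\sim_{\rm az}$, so triviality passes to the quotient. Your extra care in checking that singleton hom-sets are preserved under the full, bijective-on-objects quotient functor is exactly the implicit content of that remark.
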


Here is an easy example of a class $\S$ satisfying the preset general hypotheses, but not contained in the class of monomorphisms of its parent category $\C$ and not 
trivializing $\C[\S^{-1}]$:

 \begin{example}\label{Ord}
In the category $\sf Ord$ of preordered sets and their monotone (= order-preserving) maps, let $\S$ be the class of fully faithful surjections $f:X\to Y$, {\em i.e.}, of surjective maps $f$ with $(x\leq x'\iff f(x)\leq f(x'))$ for all $x,x'\in X$. Note that such maps are special equivalences of preordered sets, these being considered as small ``thin" categories. We claim that ${\sf Ord}[\S^{-1}]$ {\em is equivalent to the full reflective subcategory $\sf Pos$ of $\sf Ord$ of partially ordered sets} and first show that the reflector $P:{\sf Ord}\to{\sf Pos}$ maps morphisms in $\S$ to isomorphisms.

 Indeed, with the Axiom of Choice granted, its surjectivity makes every $s:X\to Y$ in $\S$ have a section $s'$ in $\sf Set$ which, since $s$ is fully faithful, actually lives in $\sf Ord$. Writing $(x\simeq \tilde{x}\iff x\leq \tilde{x} \;{\rm and}\; \tilde{x}\leq x)$ for all $x,\tilde{x}\in X$, so that the reflection of $X$ into $\sf Pos$ may be taken to be the projection $p_X:X\to X/\!\!\simeq\;=PX$, from $s'(s(x))\simeq x$ and $s(s'(y))=y$ for all $x\in X, y\in Y$ we conclude that $Ps'$ is inverse to $Ps$ in $\sf Pos$. Consequently, $P$ factors uniquely through $\Phi_{\rm az}$, by the functor
$$\bar{P}:{\sf Ord}[\S^{-1}]\to{\sf Pos},\;[s,f]_{\rm az}\mapsto Pf\cdot (Ps)^{-1}=P(f\cdot s').$$
We show that $\bar{P}$ is an equivalence of categories. Certainly, $\bar{P}$ is, like $P$, essentially surjective on objects. Noting that the reflection maps belong to $\S$, for any monotone map $h:PX\to PY$ we have the monotone map $g:=(p_Y)'\cdot h\cdot p_X:X\to Y$, so that $Pg\cdot p_X=p_Y\cdot g=p_Y\cdot (p_Y)'\cdot h\cdot p_X=h\cdot p_X$ and then
$\bar{P}([1_X,g]_{\rm az})=Pg=h$ follows. Suppose that also $[s,f]_{\rm az}:X\to Y$ satisfies $\bar{P}([s,f]_{\rm az})=h$, so that $P(f\cdot s')=Pg$ and then $p_Y\cdot(f\cdot s')=p_Y\cdot g$. With $p_Y\in\S$ one obtains the $\S$-cospan inequalities
$$\langle f\cdot s',1_Y\rangle\eqslantless\langle p_Y\cdot (f\cdot s'),p_Y\rangle=\langle p_Y\cdot g,p_Y\rangle\eqslantgtr\langle g,1_Y\rangle,$$
which imply $[s,f]_{\rm a}=[1_X,f\cdot s']_{\rm a}=[1_X,g]_{\rm a}$ and then $[s,f]_{\rm az}=[1_X,g]_{\rm az}.$
This shows that $\bar{P}$ is fully faithful.
\end{example}

 \begin{remark}\label{existence} As a quotient category of $\sf{Span}(\C,\S)$, in general the category $\C[\S^{-1}]$ may still fail to have small hom-sets. In fact, only few handy criteria are known that would guarantee its hom-sets to be small when $\C$ has small hom-sets. One such criterion is the following (see, for example, {\em  \cite{schubert}, Theorem 19.3.1}): With $\C$ finitely complete, let $\S$ be the class of morphisms mapped to isomorphisms by some functor $S:\C\longrightarrow \mathcal{B}$ which
preserves finite limits. If $\S$ admits a so-called {\em calculus of right fractions}, then the hom-sets of $\C[\S^{-1}]$ are small. Moreover, the factorizing functor $\bar{S}:\C[\S^{-1}]\longrightarrow \mathcal{B}$ with $\bar{S}\Phi_{\rm az}=S$ will not only be conservative ({\em i.e}, reflect isomorphisms), but also preserve finite limits (and, hence, be faithful).
\end{remark}

 \section{The split restriction category {\sf Par}$(\C,\S)$ of $\S$-partial maps in $\C$}

 Cockett and Lack \cite{colackrest} show that the 2-category of categories $\C$ equipped with a class $\S$ of {\em mono}morphisms in $\C$ satisfying our general hypotheses (together with functors and natural transformations compatible with the classes $\S$) is 2-equivalent to the category of so-called split restriction categories (with functors and natural transformations compatible with the restriction structure). The 2-equivalence is furnished by $(\C,\S)\mapsto \sf{Sect}(\C,\S)$ which, when $\S$ contains only monomorphisms, is the category ordinarily known as the category of $\S$-partial maps in $\C$ (see Corollary \ref{WPar in mono case}). However, {\em without} the mono constraint on $\S$,
while $\sf{Sect}(\C,\S)$ is
characterized by the universal property given in Theorem \ref{weak partial universal}, it remains unknown whether the category is a (split) restriction category (see Remark \ref{open problem} below); we suspect that it generally fails to be. Our goal is therefore to find a sufficiently large quotient category 
of ${\sf{Sect}}(\C,\S)$ that is a (split) restriction category and can justifiably take on the role of the $\S$-partial map category of $\C$ in the general (non-mono) case. For subsequent reference, let us first recall the notion of (split) restriction category in detail (see \cite{colackrest}):

 \begin{definition} 
	A {\em restriction structure} on a category is an assignment
\begin{center}
	 $f:A\longrightarrow B$
	
	 $\overline{\bar f:A\longrightarrow A}$
	 \end{center}	
		of a morphism $\bar f$ to each morphism $f$, satisfying the following four conditions:
\begin{itemize} 	
\item[\em{(R1)}] $f\cdot\bar f=f$ for all morphisms $f$;
\item[\em{(R2)}] $\bar f \cdot \bar g=\bar g\cdot\bar f$ whenever ${\rm{dom}}f={\rm{dom}}g$;
\item[\em{(R3)}] $\overline{g\cdot \bar f}=\bar g\cdot\bar f$ whenever ${\rm{dom}}f={\rm{dom}}g$;
\item[\em{(R4)}] $\bar g\cdot f=f\cdot\overline{g\cdot f}$ whenever ${\rm{cod}}f={\rm{dom}}g$.
\end{itemize}
A category with a restriction structure is called a {\em restriction category}.
A morphism $e$ such that $\bar e=e$ is called a {\em restriction idempotent}.\footnote{For all morphisms $f$, $\bar{f}$ is a restriction idempotent: consider $g=1$ in (R3) and use $\bar{1}=1$, from (R1).} A restriction idempotent
	$e$ is said to be {\em split}, if there are morphisms $m$ and $r$ such that $mr=e$ and $rm=1$.
	One says that a restriction structure on a category is {\em split} if all the restriction idempotents are split.
\end{definition}

\begin{remark}\label{open problem}
Mimicking the definition of the restriction structure on ${\sf Span}(\C,\S)$ when $\S$ is a class of monomorphisms in $\C$, one is tempted trying to define the same on ${\sf Sect}(\C,\S)$ in the general case by $\overline{[s,f]}_{{\rm{a}}}:=[s,s]_{{\rm{a}}}$ for all $\S$-spans $(s,f)$. In fact, whilst the computations used in the proof of {\em Theorem \ref{Par split rest}} below show that $\overline{(-)}$ satisfies conditions {\em (R1-4)}, we are not able to confirm that $\overline{(-)}$ is well-defined.
\end{remark}

\begin{open problem}
Does ${\sf Sect}(\C,\S)$ carry a (split) restriction structure which, when $\S$ is a class of monomorphisms, coincides with the standard structure on ${\sf Span}(\C,\S)$?
\end{open problem}

In order to find a suitable quotient category of ${\sf Sect}(\C,\S)$ as indicated above, we return to the functor
$${\rm{pb}}:{\sf Cospan}(\C,\S)(A,B)\longrightarrow{\sf Span}(\C,\S)(A,B)$$
used in Definition \ref{amalgamated} which, for objects $A,B$ in $\C$,
assigns to every $v:\langle f,s\rangle\longrightarrow\langle g,t\rangle$ of $\S$-cospans the canonical morphism $$v^{\star}:(s',f')={\rm pb}\langle f,s\rangle\longrightarrow(t',g')={\rm pb}\langle g,t\rangle,$$ {\em i.e.,} the unique $\C$-morphism $v^{\star}$ rendering commutative the diagram
 \begin{center}
	$\xymatrix{\ar[rrr]^{f'}\ar[ddd]_{s'}\ar[rd]^{v^{\star}} &&& \ar@{=}[rd]^{}\ar[ddd]^{s} &\\
		& \ar[rrr]^{g'}\ar[ddd]_{t'} &&&\ar[ddd]^{t}\\
		&&&&\\
		\ar[rrr]^{f}\ar@{=}[rd]_{} &&&\ar[rd]^{v} &\\
		&\ar[rrr]_{g} &&&}$
\end{center}

\begin{definition}
We call the $\S$-span morphism  $v^{\star}$ just defined the {\em conjugate} of the $\S$-cospan morphism $v$ and form the class 
\begin{center}
$\S^{\star}:=$ {\em closure under pullback of} $\{v^{\star}\,|\,v\; \S$-{\em cospan morphism}, $v\in\S\}$
\end{center}
of all (existing) pullbacks in $\C$ of the conjugates $v^{\star}$ of all $\S$-cospan morphisms $v$ with $v\in \S$.
\end{definition}

We state some easily verified properties of the class $\S^{\star}$:
\begin{proposition}\label{properties}
{\rm(1)} $\S^{\star}$ contains all isomorphisms and is stable under pullback in $\C$.

{\rm (2)} If $\S$ satisfies the {\em weak left cancellation condition}, so that $s\cdot t\in\S$ with $s\in\S$ implies $t\in\S$, then $\S^{\star}\subseteq\S$.

{\rm (3)} If $\S$ is a class of monomorphisms in $\C$, then $\S^{\star}$ is precisely the class of isomorphisms in $\C$.
\end{proposition}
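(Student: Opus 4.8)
The plan is to handle the three parts in turn, using throughout that $\S^{\star}$ is, by construction, the smallest pullback-stable class of $\C$-morphisms containing the \emph{generating set} $\{v^{\star}\mid v\text{ an }\S\text{-cospan morphism with }v\in\S\}$. This renders the stability assertion in (1) immediate, and reduces both (2) and (3) to checking the claimed property on the generators $v^{\star}$ alone: the passage to their pullbacks is then absorbed by the pullback-stability of $\S$ (for (2)) and of the class of isomorphisms (for (3)). Recall also that the conjugate $v^{\star}\colon(s',f')={\rm pb}\langle f,s\rangle\to(t',g')={\rm pb}\langle g,t\rangle$ is determined by the two equations $t'\cdot v^{\star}=s'$ and $g'\cdot v^{\star}=f'$.

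For part (1), only the containment of all isomorphisms needs work, stability being definitional. The idea is to realize an arbitrary isomorphism $i\colon A\to B$ as a \emph{single} conjugate. I would take the two $\S$-cospans $\langle i,1_B\rangle$ and $\langle 1_A,i^{-1}\rangle$, both with domain $A$ and codomain $B$ and with $\S$-legs $1_B$ and $i^{-1}$; their pullbacks are the $\S$-spans $(1_A,i)$ and $(i^{-1},1_B)$. Then $v=i^{-1}$ is an $\S$-cospan morphism $\langle i,1_B\rangle\to\langle 1_A,i^{-1}\rangle$ lying in $\S$, and checking it against the defining equations of the conjugate gives $v^{\star}=i$. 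Hence $i\in\S^{\star}$. (In particular every identity arises as a conjugate, e.g.\ as the conjugate of an identity cospan morphism.)

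For part (2) it suffices, by pullback-stability of $\S$, to show $v^{\star}\in\S$ for each generator. Here both first legs $s',t'$ of ${\rm pb}\langle f,s\rangle$ and ${\rm pb}\langle g,t\rangle$ lie in $\S$, being pullbacks of the $\S$-morphisms $s$ and $t$. Since the conjugate satisfies $t'\cdot v^{\star}=s'$, we have $t'\cdot v^{\star}=s'\in\S$ with $t'\in\S$, so the weak left cancellation condition yields $v^{\star}\in\S$ directly. Therefore $\S^{\star}\subseteq\S$.

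For part (3), by part (1) it remains to prove $\S^{\star}\subseteq\{\text{isomorphisms}\}$, and since pullbacks of isomorphisms are isomorphisms it is enough to show each generator $v^{\star}$ is an isomorphism. Now $v\in\S$ is monic and $v^{\star}$ is exactly the comparison from the pullback of $(f,s)$ to the pullback of $(g,t)=(v\cdot f,\,v\cdot s)$. The key point is that, $v$ being monic, the square with top edge $1$, left edge $s$, bottom edge $v$ and right edge $v\cdot s$ is itself a pullback; pasting it onto the pullback square defining $(s',f')$ exhibits that square as a pullback of $(v\cdot f,\,v\cdot s)$, whence by uniqueness of pullbacks the induced $v^{\star}$ is an isomorphism. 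Together with part (1) this gives that $\S^{\star}$ is precisely the class of isomorphisms. I expect the only genuinely non-routine step to be the explicit construction in part (1) realizing every isomorphism as a conjugate; (2) and (3) follow at once from the defining equation $t'\cdot v^{\star}=s'$ combined with weak left cancellation and a pullback-pasting argument, respectively.
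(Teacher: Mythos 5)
Your proof is correct and takes essentially the same approach as the paper's: your part (2) is exactly the paper's appeal to the commutativity of the left face of the cube defining $v^{\star}$ (the identity $t'\cdot v^{\star}=s'$, combined with weak left cancellation and pullback-stability of $\S$), and your part (3) is the paper's observation that for monic $v$ the pullback of $\langle f,s\rangle$ serves also as a pullback of $\langle g,t\rangle$, which you justify by the standard pasting argument. The paper simply declares (1) obvious, and your explicit realization of each isomorphism as a conjugate (with pullback-closure absorbing the ambiguity in the choice of pullbacks) is a correct way of filling in that detail.
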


\begin{proof}
(1) is obvious, and (2) follows from the commutativity of the left panel of the cubic diagram above. For (3), when the morphism $v$ in that diagram is a monomorphism,
the pullback $(s',f')$ of $(f,s)$ serves also as a pullback for $(g,t)$, so that $v^{\star}$ must be an isomorphism.
\end{proof}

\begin{remark}\label{left cancel}
{\em (1)} In general, $\S^{\star}$ is not comparable with $\S$ via inclusion. Indeed, for  $\S$ as in {\em Example \ref{Ord}} one easily shows that $\S^{\star}$ is a class of full embeddings in {\sf Ord}, so that $\S\cap\S^{\star}$ is the class of isomorphisms; consequently, since both $\S$ and $\S^{\star}$ contain not just isomorphisms, this implies that one has neither $\S^{\star}\subseteq\S$ nor $\S\subseteq\S^{\star}$. In particular, the weak left cancellation condition in {\em (2)} of {\em Proposition \ref{properties}} is essential.

{\em (2)} If $\S$ satisfies the weak left cancellation condition,
then, for our purposes,
 it suffices to define $\S^{\star}$ as the closure of the class $\{v^{\star} \,|\,v\;\S$-cospan morphism,\,$v\in\S\,\}$ under pullback just along $\S$-morphisms, not necessarily along all morphisms.
\end{remark}

\begin{definition}\label{tilde zero}
 Modifying the $\leqslant$-preorder for $\S$-spans of {\em Definition \ref{retractive}} we define the relation $\leqslant^{\star}$ for $\S$-spans by
$$(s,f)\leqslant^{\star}(\tilde{s},\tilde{f})\Longleftrightarrow \exists\; x:(s,f)\longrightarrow(\tilde{s},\tilde{f}),\;
x\in\S^{\star}.$$
With the closure of $\S^{\star}$ under pullback (along morphisms in $\S$ when $\S$ satisfies the weak left cancellation condition) $\leqslant^{\star}$ is easily seen to be compatible relation for $\S$-spans\footnote{While $\leqslant^{\star}$ is trivially reflexive, we are, however, no longer being assured of its transitivity since $\S^{\star}$ may fail to be closed under composition.}, so that the least equivalence relation $\sim_{{\rm{z}}^{\star}}$
containing $\leqslant^{\star}$ is also compatible. Writing just ${\rm{z}}^{\star}$ when this modified zig-zag relation $\sim_{{\rm{z}}^{\star}}$ is used as an index, we call
the quotient category
$${\sf Par}(\C,\S):={\sf Span}_{{\rm{z}}^{\star}}(\C,\S)$$
the $\S$-{\em partial map category} of $\C$.
It comes with the functors
$$\Phi_{\rm{z}^{\star}}:\C\longrightarrow\; \sf{Par}(\C,\S)\;\longleftarrow\S^{\rm{op}}:\Psi_{\rm{z}^{\star}}$$
$$(f:D\to B)\longmapsto [1_D,f]_{\rm{z}^{\star}}\quad\quad [s,1_D]_{\rm{z}^{\star}} \longleftarrow\!\shortmid (s:A\leftarrow D).$$
 \end{definition}
 
 Let us point out right away that, as a consequence of Proposition \ref{properties}(3), when $\S$ is a class of monomorphisms, ${\sf Par}(\C,S)$ is (isomorphic to) the category ${\sf Span}(\C,\S)$, so that there is no clash with the standard terminology in this case (as alluded to in Corollary \ref{WPar in mono case}).
Next we wish to confirm that ${\sf Par}(\C,\S)$ is indeed a quotient of $\sf {Sect(\C,\S)}=\sf{Span}_{\rm{a}}(\C,\S)$ and prove:

 \begin{lemma}\label{comp rel vs zero connected}
The relation $\sim_{\rm{a}}$ of {\em Definition \ref{weak partial}} is contained in $\sim_{{\rm{z}}^{\star}}$.
\end{lemma}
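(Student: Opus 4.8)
The plan is to exploit the minimality built into the definition of the associated span relation $\approx_{\rm a}$ (Definition \ref{amalgamated}(2)), together with the observation that the conjugate construction $v\mapsto v^{\star}$ is precisely what generates the class $\S^{\star}$. Since $\sim_{{\rm{z}}^{\star}}$ is by construction a compatible equivalence relation (Definition \ref{tilde zero}), it suffices to prove the inclusion $\approx_{\rm a}\subseteq\;\sim_{{\rm{z}}^{\star}}$: the passage to the generated equivalence relation $\sim_{\rm a}$ is then automatic, because any equivalence relation containing $\approx_{\rm a}$ must contain the equivalence relation it generates.

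First I would record the single essential link between cospans and spans. Suppose $\langle f,s\rangle\eqslantless\langle g,t\rangle$, witnessed by an $\S$-cospan morphism $v:\langle f,s\rangle\longrightarrow\langle g,t\rangle$ with $v\in\S$. By definition its conjugate is an $\S$-span morphism
$$v^{\star}:{\rm pb}\langle f,s\rangle\longrightarrow{\rm pb}\langle g,t\rangle,$$
and it lies in $\S^{\star}$ simply because $\S^{\star}$ was defined to be the pullback-closure of the set $\{v^{\star}\mid v\;\text{an}\;\S\text{-cospan morphism},\,v\in\S\}$, which contains these conjugates outright. Hence ${\rm pb}\langle f,s\rangle\leqslant^{\star}{\rm pb}\langle g,t\rangle$, and therefore ${\rm pb}\langle f,s\rangle\sim_{{\rm{z}}^{\star}}{\rm pb}\langle g,t\rangle$.

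With this in hand, $\sim_{{\rm{z}}^{\star}}$ is a compatible $\S$-span relation satisfying the defining implication $\bigl(\langle f,s\rangle\eqslantless\langle g,t\rangle\Longrightarrow{\rm pb}\langle f,s\rangle\sim_{{\rm{z}}^{\star}}{\rm pb}\langle g,t\rangle\bigr)$ of Definition \ref{amalgamated}(2). As $\approx_{\rm a}$ is the \emph{least} compatible $\S$-span relation enjoying precisely that property, minimality yields $\approx_{\rm a}\subseteq\;\sim_{{\rm{z}}^{\star}}$; and since $\sim_{\rm a}$ is merely the equivalence relation generated by $\approx_{\rm a}$ while $\sim_{{\rm{z}}^{\star}}$ is already an equivalence relation, the desired inclusion $\sim_{\rm a}\subseteq\;\sim_{{\rm{z}}^{\star}}$ follows.

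I expect essentially no serious obstacle here; the entire content sits in the bookkeeping of Definitions \ref{amalgamated} and \ref{tilde zero}. The only point needing a moment's care is the verification that $v^{\star}$ genuinely lies in $\S^{\star}$, i.e. reconciling the generating set with the pullback-closed class $\S^{\star}$ — but since the generators themselves belong to $\S^{\star}$, this is immediate and does not even require the weak left cancellation condition. One should also double-check that $\sim_{{\rm{z}}^{\star}}$ is indeed compatible, so that the minimality clause defining $\approx_{\rm a}$ may legitimately be applied to it; this, however, is exactly what is recorded in Definition \ref{tilde zero}.
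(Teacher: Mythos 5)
Your proof is correct, and it takes a genuinely different (and leaner) route than the paper's. Both arguments pivot on the same key observation: a witness $v\in\S$ for $\langle f,s\rangle\eqslantless\langle g,t\rangle$ has a conjugate $v^{\star}:{\rm pb}\langle f,s\rangle\to{\rm pb}\langle g,t\rangle$ that lies in $\S^{\star}$ outright (it is a generator of that class, no pullback-closure needed), whence ${\rm pb}\langle f,s\rangle\leqslant^{\star}{\rm pb}\langle g,t\rangle$ and so ${\rm pb}\langle f,s\rangle\sim_{{\rm z}^{\star}}{\rm pb}\langle g,t\rangle$. The difference is in how the rest is packaged. The paper argues \emph{from the inside}: it takes an arbitrary instance $(s,f)\approx_{\rm a}(t,g)$, unfolds it via the explicit characterization of Proposition \ref{associated span relation} into the whiskered form $(1,k)\cdot{\rm pb}\langle\check f,\check s\rangle\cdot(u,1)$, applies the conjugate observation to the middle pieces, and then re-assembles the whiskering using compatibility of $\sim_{{\rm z}^{\star}}$. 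You argue \emph{from the outside}: you check that $\sim_{{\rm z}^{\star}}$ is a compatible $\S$-span relation satisfying the generating implication of Definition \ref{amalgamated}(2), and invoke the minimality clause in that definition to get $\approx_{\rm a}\,\subseteq\,\sim_{{\rm z}^{\star}}$, after which the passage to generated equivalence relations is automatic. Your route buys economy: it bypasses Proposition \ref{associated span relation} entirely, so the lemma needs only the definitions plus the compatibility of $\sim_{{\rm z}^{\star}}$ recorded in Definition \ref{tilde zero}. The paper's route costs more here but re-uses machinery it needs anyway (the explicit description also drives the proofs of Theorem \ref{weak partial universal} and Theorem \ref{partial}), and it exhibits concretely how the witnessing data transport, which is arguably more informative. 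Note that both arguments make the same implicit well-definedness concession: $v^{\star}$ depends on chosen pullbacks, but this is harmless since all relations are taken on isomorphism classes of spans.
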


 \begin{proof}
Employing again the notation used in the proof of Theorem \ref{weak partial universal}, when $(s,f)\approx_{\rm{a}}(t,g)$ we have an $\S$-cospan morphism $v:\langle\check{f},\check{s}\rangle\longrightarrow\langle\check{g},\check{t}\rangle$ with $v\in\S$, which gives us the (vertical) $\S$-span
morphisms $v^{\star}:(\hat{s},\hat{f})\longrightarrow(\hat{t},\hat{g})$ with $v^{\star}\in\S^{\star}$; consequently, $(\hat{s},\hat{f})\sim_{{\rm{z}}^\star}(\hat{t},\hat{g})$. Since $(s,f), (t,g)$ are obtained from $(\hat{s},\hat{f}),(\hat{t},\hat{g})$ by ``horizontal whiskering",
$(s,f)\sim_{{\rm{z}}^{\star}}(t,g)$ follows, by compatibility of $\sim_{{\rm{z}}^{\star}}$.
\end{proof}
The Lemma shows that the assignment
$[s,f]_{\rm{a}}\mapsto[s,f]_{\rm{z}^{\star}}$ describes a functor $$\Gamma:{\sf{Sect}}(\C,\S)\to{\sf{Par}}(\C,\S),$$ uniquely determined by $\Phi_{\rm{a}}\Gamma=\Phi_{\rm{z}^{\star}},\,\Psi_{\rm{a}}\Gamma=\Psi_{\rm{z}^{\star}}$ (see Theorem \ref{weak partial universal}). Consequently, ${\sf Par}(\C,\S)$ is a quotient category of ${\sf Sect}(\C,\S)$; but it is nothing new when $\S$ is a class of monomorphisms:

 \begin{corollary}\label{gamma}
{\em (1)} $\sf{Par}(\C,\S)\cong\sf {Sect}(\C,\S)/\!\sim,$\, with $\sim$ induced by $\Gamma$.

 {\em(2)} If $\S$ is a class of monomorphisms, then
$${\sf Par}(\C,\S)\cong{\sf Sect}(\C,\S)\cong{\sf Span}(\C,\S).$$
\end{corollary}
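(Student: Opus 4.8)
The plan is to dispatch the two parts separately, with (1) a formal consequence of the existence of $\Gamma$ and (2) a straightforward combination of two results already established.

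For (1), I would first note that $\Gamma:{\sf Sect}(\C,\S)\to{\sf Par}(\C,\S)$ is the identity on objects, since both categories carry the object class of $\C$, and that it is full: every morphism $[s,f]_{{\rm z}^{\star}}$ of ${\sf Par}(\C,\S)$ is by definition $\Gamma[s,f]_{\rm a}$. Define $\sim$ on parallel ${\sf Sect}$-morphisms by $\phi\sim\psi\iff\Gamma\phi=\Gamma\psi$; functoriality of $\Gamma$ makes $\sim$ a congruence, so the quotient category ${\sf Sect}(\C,\S)/\!\sim$ exists and $\Gamma$ factors through the quotient functor by a comparison functor $\bar\Gamma$ that is faithful by construction. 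Since $\bar\Gamma$ inherits fullness and bijectivity-on-objects from $\Gamma$, it is an isomorphism of categories, which is precisely assertion (1). Unwinding $\phi\sim\psi$ on representatives yields $(s,f)\sim_{{\rm z}^{\star}}(t,g)$, so that $\sim$ is nothing but the descent of $\sim_{{\rm z}^{\star}}$ to $\sim_{\rm a}$-classes; this descent is well defined exactly because $\sim_{\rm a}\subseteq\sim_{{\rm z}^{\star}}$ by Lemma \ref{comp rel vs zero connected}.

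For (2), the isomorphism ${\sf Sect}(\C,\S)\cong{\sf Span}(\C,\S)$ is already Corollary \ref{WPar in mono case}, so it remains only to identify ${\sf Par}(\C,\S)$ with ${\sf Span}(\C,\S)$. Here I would invoke Proposition \ref{properties}(3): when $\S$ consists of monomorphisms, $\S^{\star}$ reduces to the class of isomorphisms of $\C$. Consequently the relation $\leqslant^{\star}$ of Definition \ref{tilde zero} reads $(s,f)\leqslant^{\star}(\tilde s,\tilde f)$ iff there is a span morphism $x:(s,f)\to(\tilde s,\tilde f)$ that is a $\C$-isomorphism, i.e.\ iff $(s,f)$ and $(\tilde s,\tilde f)$ are isomorphic $\S$-spans. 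As span-isomorphism is already an equivalence relation --- indeed the very identification by which ${\sf Span}(\C,\S)$ is formed --- the generated relation $\sim_{{\rm z}^{\star}}$ coincides with it, giving ${\sf Par}(\C,\S)={\sf Span}_{{\rm z}^{\star}}(\C,\S)={\sf Span}(\C,\S)$. Chaining the two isomorphisms delivers ${\sf Par}(\C,\S)\cong{\sf Sect}(\C,\S)\cong{\sf Span}(\C,\S)$.

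I do not anticipate a genuine obstacle, as both parts are corollaries of machinery erected earlier; the only point needing a little care is the bookkeeping in (1), namely verifying that the congruence ``induced by $\Gamma$'' is exactly the quotient presentation rather than a finer or coarser relation. This dissolves the moment one observes that $\Gamma$ is full and identity-on-objects, which forces the comparison functor $\bar\Gamma$ to be full, faithful, and bijective on objects, hence an isomorphism.
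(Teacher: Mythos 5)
Your proposal is correct and follows essentially the same route as the paper: part (1) is exactly the observation that $\Gamma$ (full, bijective on objects, well defined by Lemma \ref{comp rel vs zero connected}) exhibits ${\sf Par}(\C,\S)$ as the quotient of ${\sf Sect}(\C,\S)$ by its induced congruence, and part (2) combines Proposition \ref{properties}(3) (so that $\leqslant^{\star}$ collapses to span-isomorphism and ${\sf Par}(\C,\S)={\sf Span}(\C,\S)$) with Corollary \ref{WPar in mono case}, which is precisely how the paper derives it. Your extra bookkeeping with the comparison functor $\bar\Gamma$ just makes explicit what the paper leaves as an immediate consequence.
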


 Without any additional condition on $\S$ one can prove:

 \begin{theorem}\label{Par split rest}
	${\sf Par}(\C,\S)$ is a split restriction category, with its restriction structure defined by $$\overline{[s,f]}_{{\rm{z}^{\star}}}=[s,s]_{{\rm{z}}^{\star}},$$ for all $\S$-spans $(s,f)$.
\end{theorem}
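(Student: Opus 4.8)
The plan is to verify the four restriction axioms (R1)--(R4) directly on representatives, and then establish splitting of all restriction idempotents. The central difficulty, as the authors flag in Remark \ref{open problem}, is \emph{well-definedness} of $\overline{(-)}$; the axiom verifications are then largely formal manipulations that already ``work'' at the level of representatives. So the logical order I would follow is: first prove well-definedness, then (R1)--(R4), then splitting.

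\medskip\noindent\textbf{Well-definedness.} I must show that if $(s,f)\sim_{\mathrm{z}^{\star}}(t,g)$, then $[s,s]_{\mathrm{z}^{\star}}=[t,t]_{\mathrm{z}^{\star}}$. Since $\sim_{\mathrm{z}^{\star}}$ is generated by $\leqslant^{\star}$, it suffices to treat the generating case $(s,f)\leqslant^{\star}(t,g)$, i.e.\ there is $x\in\S^{\star}$ with $t\cdot x=s$ and $g\cdot x=f$. The claim is then that the \emph{same} $x$ witnesses $(s,s)\leqslant^{\star}(t,t)$: one checks $t\cdot x=s$ (given) and $s\cdot x = s$? --- more carefully, $(s,s)$ has legs $(s,s)$ and I need a morphism $(s,s)\to(t,t)$ in $\S^{\star}$, namely $x$ with $t\cdot x=s$ and $t\cdot x=s$, which holds. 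Thus $[s,s]_{\mathrm{z}^{\star}}=[t,t]_{\mathrm{z}^{\star}}$, and well-definedness follows by symmetry and transitivity of $\sim_{\mathrm{z}^{\star}}$. This is the step I expect to be the genuine obstacle: it is \emph{precisely} the passage from $\S$ to the enlarged class $\S^{\star}$ that rescues well-definedness, because on $\mathsf{Sect}(\C,\S)$ (where one quotients only by $\S$-morphisms via $\approx_{\mathrm a}$) there is no reason the cospan data defining $(s,f)\approx_{\mathrm a}(t,g)$ should force $[s,s]=[t,t]$; the conjugates $v^{\star}\in\S^{\star}$ supply exactly the vertical comparison needed.

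\medskip\noindent\textbf{Axioms (R1)--(R4).} With representatives chosen, I would compute each side as an $\S$-span and exhibit an explicit $\S^{\star}$-morphism (typically a pullback comparison, or simply an identity/isomorphism) witnessing the required equivalence. For (R1), $[s,f]\cdot[s,s]$ is computed by pulling back $f$ against $s$; the diagonal of the kernel-pair square gives a canonical section identifying the composite with $[s,f]$. For (R2), both $\overline{[s,f]}\cdot\overline{[t,g]}$ and $\overline{[t,g]}\cdot\overline{[s,f]}$ (on a common domain) reduce to a span built from a pullback of $s$ and $t$, and commutativity of pullback gives the isomorphism. For (R3), $\overline{[t,g]\cdot\overline{[s,f]}}$ unwinds to the same pullback-of-$s$-and-$t$ span as $\overline{[t,g]}\cdot\overline{[s,f]}$. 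For (R4), $\overline{[t,g]}\cdot[s,f]$ and $[s,f]\cdot\overline{[t,g]\cdot[s,f]}$ both yield spans constructed from the pullback of $g$ along $f$, matched by a comparison morphism lying in $\S^{\star}$ by pullback-stability (Proposition \ref{properties}(1)). Throughout I lean on compatibility of $\sim_{\mathrm{z}^{\star}}$ under horizontal composition and on the Beck--Chevalley identities for $(\Phi_{\mathrm{z}^{\star}},\Psi_{\mathrm{z}^{\star}})$.

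\medskip\noindent\textbf{Splitting.} Finally, every restriction idempotent has the form $[s,s]_{\mathrm{z}^{\star}}$ for some $s\in\S$. I would split it through the object $\mathrm{dom}(s)$ via $m=[1,s]_{\mathrm{z}^{\star}}=\Phi_{\mathrm{z}^{\star}}s$ and $r=[s,1]_{\mathrm{z}^{\star}}=\Psi_{\mathrm{z}^{\star}}s$: then $m\cdot r=[1,s]\cdot[s,1]=[s,s]$ (the given idempotent) and $r\cdot m=[s,1]\cdot[1,s]$, which reduces via the kernel pair of $s$ to $[1,1]$ --- here is exactly the point where $\S^{\star}$ (rather than $\S$) is needed, since the diagonal comparison into the kernel pair is a conjugate and hence lies in $\S^{\star}$, identifying $r\cdot m$ with the identity. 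This yields $mr=e$, $rm=1$, so the restriction structure is split, completing the proof.
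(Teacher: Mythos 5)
Your proposal is correct and follows essentially the same route as the paper: the same ``same-$x$'' observation for well-definedness, the same splitting $[s,s]=[1,s]\cdot[s,1]$ with $[s,1]\cdot[1,s]=1$, and span-level verifications of (R1)--(R4). The only cosmetic difference is that the paper packages the kernel-pair-diagonal argument (your ``diagonal is a conjugate, hence in $\S^{\star}$'' step) once and for all as the identity $[s,1]\cdot[1,s]=1$, imported from Lemmas \ref{section} and \ref{comp rel vs zero connected}, and then runs (R1)--(R4) as purely algebraic manipulations, whereas you re-derive the corresponding $\S^{\star}$-comparison morphisms explicitly inside each axiom.
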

\begin{proof}
Trivially, $(s,f)\leqslant^{\star} (t,g)$ implies $(s,s)\leqslant^{\star}(t,t)$. Thus, writing just $[s,f]$ for $[s,f]_{{\rm z}^{\star}}$ in what follows, $[s,f]=[t,g]$ implies $[s,s]=[t,t]$. As a consequence, $\overline{(\ \ )}$ is well-defined. We note that
	Lemmas \ref{section} and \ref{comp rel vs zero connected} imply $[s,1]\cdot[1,s]=1$, a crucial identity when we check (R1-4) below. Since trivially $[s,s]=[1,s]\cdot[s,1]$, the identity also shows that $[s,s]$, once recognized as a restriction idempotent, splits.
	
(R1) For every morphism $[s,f]$ one has	\begin{center}
\begin{tabular}{ll}
$[s,f]\cdot\overline{[s,f]}$&$=[s,f]\cdot[s,s]=[1,f]\cdot[s,1]\cdot[1,s]\cdot[s,1]$\\&$=[1,f]\cdot[s,1]=[s,f].$
\end{tabular}
\end{center}
	
(R2)	For morphisms $[s,f]$ and $[t,g]$ with the same domain, we form the pullback square
$s\cdot t'=t\cdot s'$ in $\C$ and obtain the needed equality below:
\begin{center}
\begin{tabular}{ll}
$\overline{[s,f]}\cdot\overline{[t,g]}$&$=[s,s]\cdot[t,t]=[t\cdot s',s\cdot t']$\\&$=[s\cdot t',t\cdot s']
=[t,t]\cdot[s,s]=\overline{[t,g]}\cdot \overline{[s,f]}.$\\
\end{tabular}
\hfil
\end{center}
	
(R3) With the same notation as in (R2), we have
$$\overline{[t,g]\cdot\overline{[s,f]}}=\overline{[t,g]\cdot[s,s]}=[s\cdot t',s\cdot t']=[t,t]\cdot[s,s]=\overline{[t,g]}\cdot \overline{[s,f]}.$$
	
(R4) For morphisms $[s,f]:A\rightarrow B$ and $[t,g]:B\rightarrow C$, we form the pullback square $t\cdot f'=f\cdot t'$ in $\C$ and obtain the needed equality below:
\begin{center}
\begin{tabular}{ll}
$\overline{[t,g]}\cdot[s,f]$&$=[t,t]\cdot[s,f]=[s\cdot t',t\cdot f']$\\
&$=[s\cdot t',f\cdot t']=[1,f]\cdot[s\cdot t',t']$\\
&$=[1,f]\cdot[s,1]\cdot[1,s]\cdot[s\cdot t',t']$\\
&$=[s,f]\cdot[s\cdot t',s\cdot t']=[s,f]\cdot\overline{[t,g]\cdot[s,f]}$.\\
\end{tabular}
\end{center}
\end{proof}

 Next we show that the functor $\Gamma$ of Corollary \ref{gamma} is a localizing functor, mapping the morphisms of the class $\Phi_{\rm{a}}(\S^{\star})$ (with $\Phi_{\rm{a}}: \C\longrightarrow{\sf Sect}(\C,\S),\, f\mapsto[1,f]_{{\rm a}},)$ to isomorphisms of ${\sf Par}(\C,\S)$, provided that $\mathcal S^{\star}\subseteq \mathcal S$: 

 \begin{theorem}\label{partial}
If $\mathcal S^{\star}\subseteq \mathcal S$, in particular if $\S$ satisfies the weak left cancellation condition, then ${\sf Par}(\C,\S)$ is a localization of ${\sf Sect(\C,\S})$:
$${\sf Par}(\C,\S)\cong{\sf Sect}(\C,\S)[\Phi_{\rm{a}}(\S^{\star})^{-1}].$$
Under the same hypothesis, ${\sf Retr}(\C,\S)$ is also a quotient category of ${\sf Sect}(\C,\S)$, and $${\sf Retr}(\C,\S)\cong\C[\S^{-1}]$$.
\end{theorem}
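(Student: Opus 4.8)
The plan is to establish the two isomorphisms separately, in each case reducing matters to containments between the span relations $\sim_{\rm a}$, $\sim_{{\rm z}^\star}$, $\sim_{\rm z}$ and to the one-sided identity recorded in Lemma \ref{section}. For the first isomorphism I would identify the quotient functor $\Gamma\colon{\sf Sect}(\C,\S)\to{\sf Par}(\C,\S)$ of Corollary \ref{gamma} as the localizing functor, by verifying directly that it has the defining universal property of ${\sf Sect}(\C,\S)[\Phi_{\rm{a}}(\S^\star)^{-1}]$: it inverts every morphism of $\Phi_{\rm{a}}(\S^\star)$, and it is universal among functors out of ${\sf Sect}(\C,\S)$ doing so. Since a localization is determined up to isomorphism by this property, establishing both points yields the claim; note that the hypothesis $\S^\star\subseteq\S$ is exactly what makes the spans $[x,1]$ for $x\in\S^\star$ legitimate $\S$-spans, so that $\Psi_{{\rm z}^\star}x$ and $\Psi_{\rm{a}}x$ are available in the argument.

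First I would show that $\Gamma$ inverts $\Phi_{\rm{a}}(\S^\star)$. Fix $x\in\S^\star\subseteq\S$; then $\Gamma(\Phi_{\rm{a}}x)=\Phi_{{\rm z}^\star}x=[1,x]_{{\rm z}^\star}$, and I claim its inverse is $\Psi_{{\rm z}^\star}x=[x,1]_{{\rm z}^\star}$. One composite, $[x,1]_{{\rm z}^\star}\cdot[1,x]_{{\rm z}^\star}=1$, already holds in ${\sf Par}(\C,\S)$, since Lemma \ref{section} gives it in ${\sf Sect}(\C,\S)$ and $\Gamma$ is a functor (this is the ``crucial identity'' used in Theorem \ref{Par split rest}). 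For the other composite one has $[1,x]_{{\rm z}^\star}\cdot[x,1]_{{\rm z}^\star}=[x,x]_{{\rm z}^\star}$, the relevant pullback being trivial, and then $[x,x]_{{\rm z}^\star}=[1,1]_{{\rm z}^\star}=1$ because the span morphism $x$ itself witnesses $(x,x)\leqslant^\star(1,1)$; here it is essential that $x\in\S^\star$, so that $x$ is an admissible vertical morphism for $\leqslant^\star$.

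Next, for universality, I would take any $K\colon{\sf Sect}(\C,\S)\to\D$ inverting $\Phi_{\rm{a}}(\S^\star)$ and check that it collapses each generating step of $\sim_{{\rm z}^\star}$, so that it factors uniquely through $\Gamma$. From $\Psi_{\rm{a}}x\cdot\Phi_{\rm{a}}x=1$ (Lemma \ref{section}) and the invertibility of $K(\Phi_{\rm{a}}x)$ one gets $K(\Psi_{\rm{a}}x)=K(\Phi_{\rm{a}}x)^{-1}$, hence $K(\Phi_{\rm{a}}x)\cdot K(\Psi_{\rm{a}}x)=1$. For a single step $(s,f)\leqslant^\star(t,g)$, with $s=t\cdot x$, $f=g\cdot x$ and $x\in\S^\star$, the factorization $[s,f]_{\rm a}=\Phi_{\rm{a}}g\cdot\Phi_{\rm{a}}x\cdot\Psi_{\rm{a}}x\cdot\Psi_{\rm{a}}t$ then maps under $K$ to $K(\Phi_{\rm{a}}g\cdot\Psi_{\rm{a}}t)=K[t,g]_{\rm a}$. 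As ``$K$ identifies two morphisms'' is itself an equivalence relation on homs containing $\leqslant^\star$, it contains $\sim_{{\rm z}^\star}$, so $K$ factors through $\Gamma$; uniqueness is automatic since $\Gamma$ is surjective on morphisms. This identifies $\Gamma$ with the localization and proves the first isomorphism.

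For the second part I would argue purely at the level of relations. Under $\S^\star\subseteq\S$ the vertical morphisms allowed in $\leqslant^\star$ are in particular $\S$-morphisms, so $\leqslant^\star\subseteq\leqslant$ and hence $\sim_{{\rm z}^\star}\subseteq\sim_{\rm z}$; combined with Lemma \ref{comp rel vs zero connected} (which gives $\sim_{\rm a}\subseteq\sim_{{\rm z}^\star}$) this yields $\sim_{\rm a}\subseteq\sim_{\rm z}$, exhibiting ${\sf Retr}(\C,\S)={\sf Span}_{\rm z}(\C,\S)$ as a quotient of ${\sf Sect}(\C,\S)={\sf Span}_{\rm a}(\C,\S)$. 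Since $\sim_{\rm az}$ is the least equivalence relation containing $\sim_{\rm a}\cup\sim_{\rm z}$ and now $\sim_{\rm a}\subseteq\sim_{\rm z}$, we get $\sim_{\rm az}=\sim_{\rm z}$, whence $\C[\S^{-1}]={\sf Span}_{\rm az}(\C,\S)={\sf Span}_{\rm z}(\C,\S)={\sf Retr}(\C,\S)$ by Theorem \ref{fraction}. I expect the main obstacle to be conceptual rather than computational: ensuring that the a priori ``formal'' localization ${\sf Sect}(\C,\S)[\Phi_{\rm{a}}(\S^\star)^{-1}]$ genuinely coincides with the honest quotient ${\sf Par}(\C,\S)$, which is precisely why the universal property is verified through $\Gamma$ itself rather than by building formal inverses; and keeping track of exactly where $\S^\star\subseteq\S$ is used, namely twice — to render $[x,1]$ an $\S$-span in the invertibility step, and to obtain $\leqslant^\star\subseteq\leqslant$ in the second part.
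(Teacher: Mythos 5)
Your proposal is correct and follows essentially the same route as the paper's own proof: both establish that $\Gamma$ inverts $\Phi_{\rm a}(\S^{\star})$ via the witness $(x,x)\leqslant^{\star}(1,1)$ together with Lemma \ref{section}, both verify universality by checking that any functor inverting $\Phi_{\rm a}(\S^{\star})$ collapses the generating relation $\leqslant^{\star}$ (using fullness and bijectivity on objects of $\Gamma$ for uniqueness), and both obtain the second statement from the inclusions $\sim_{{\rm z}^{\star}}\subseteq\,\sim_{\rm z}$ and $\sim_{\rm a}\subseteq\,\sim_{{\rm z}^{\star}}$, forcing $\sim_{\rm az}=\,\sim_{\rm z}$. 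Your bookkeeping of exactly where $\S^{\star}\subseteq\S$ is used matches the paper's implicit usage as well.
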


 \begin{proof}
For $x\in\S^{\star}$ we have $x\in \mathcal S$ by hypothesis and, therefore, $(x,x)\in \sf{Span}(\C,\mathcal S)$. Trivially then, $(x,x)\leqslant^{\star} (1,1)$, and $[x,x]_{\rm{z}^{\star}}=1$ follows. This implies $\Gamma[1,x]_{\rm a}\cdot \Gamma[x,1]_{\rm a}=\Gamma[x,x]_{\rm a}=1$, and since $[x,1]_{\rm a}\cdot[1,x]_{\rm a}=1$ by Lemma \ref{section}, we see that $\Gamma\Phi_{\rm{a}}x=\Gamma[1,x]_{\rm a}=[1,x]_{\rm{z}^{\star}}$ is an isomorphism, with inverse $\Gamma[x,1]_{\rm a}=[x,1]_{\rm{z}^{\star}}$.

 Now consider any functor $F:{\sf Sect}(\C,\S)\longrightarrow\mathcal D$ mapping all $\Phi_{\rm a}x \;(x\in\S^{\star})$ to isomorphisms. We must confirm that $F$ factors as $F'\Gamma = F$, for a unique functor $F':{\sf Par}(\C,\S)\longrightarrow\mathcal D$. But since $\Gamma$ is bijective on objects and full, this assertion becomes obvious once we have shown that $F'$ is well defined when (by necessity) putting $F'[s,f]_{{\rm z}^{\star}}:=F[s,f]_{\rm a}$ for all $\S$-spans $(s,f)$. Considering $(s,f)\leqslant^{\star}(t,g)$, so that $s=t\cdot x,\, f=g\cdot x$ for some $x\in\S^{\star}$, we first note that $[x,1]_{\rm a}\cdot[1,x]_{\rm a}=1$ implies
$F[1,x]_{\rm a}\cdot F[x,1]_{\rm a} = 1$ since $F[1,x]_{\rm a}$ is an isomorphism; consequently,
$$F[s,f]_{\rm a}=F[1,f]_{\rm a}\cdot F[s,1]_{\rm a}=F[1,g]_{\rm a}\cdot F[1,x]_{\rm a}\cdot F[x,1]_{\rm a}\cdot F[t,1]_{\rm a}=F[t,g]_{\rm a}.$$
Since $\leqslant^{\star}$ generates the equivalence relation $\sim_{{\rm z}^{\star}}$, well-definedness of $F'$ follows.

 The additional statement on the existence of quotient functors and on ${\sf Retr}(\C,\S)$ serving as a model for $\C[\S^{-1}]$ follows from the following obvious inclusions of the relevant equivalence relations: $\S^{\star}\subseteq\S$ implies $\sim_{{\rm z}^{\star}}\,\subseteq\, \sim_{\rm z}$ which, by Lemma \ref{comp rel vs zero connected}, gives $\sim_{\rm z}\,=\,\sim_{\rm az}$.
\end{proof}

There is an easy generalization of the main statement of Theorem \ref{partial} instead of $\S^{\star}$ one considers any pullback-stable subclass $\mathcal T$ of $\S$ which contains $\S^{\star}$. Rather than $\leqslant^{\star}$ we may then consider the $\S$-span relation
$$(s,f)\leqslant_{\mathcal T}(\tilde{s},\tilde{f})\Longleftrightarrow \exists\; x:(s,f)\longrightarrow(\tilde{s},\tilde{f}),\;
x\in{\mathcal T},$$
and its generated equivalence relation, the $\mathcal T$-{\em zig-zag relation} $\sim_{{\rm z}_{\mathcal T}}$. Hence, if we write just ${\rm z}_{\mathcal T}$ when $\sim_{{\rm z}_{\mathcal T}}$ is used as an index, an easy adaptation of the above proof then shows
$${\sf Span}_{{\rm z}_{\mathcal T}}(\C,\S)\cong{\sf Sect}(\C,\S)[\Phi_{\rm a}(\mathcal T)^{-1}].$$
Now, under the hypothesis $\S^{\star}\subseteq\S$, the choice ${\mathcal T}=\S^{\star}$ gives the first statement of Theorem \ref{partial}, while the choice $\mathcal T=\S$ returns Theorem \ref{fraction}, presenting $\C[\S^{-1}]$ as ${\sf Sect}(\C,\S)[\Phi_{\rm a}(\mathcal S)^{-1}].$

\section{{\sf Par} as a left adjoint 2-functor}
 Extending a key result obtained in \cite{colackrest} we now provide a setting which presents
$(\C,\S)\mapsto {\sf Par(\C,S)}$
as the left adjoint to the formation of the category ${\sf Total}(\X)$ for every split restriction category $\X$. In particular, the category ${\sf Par}(\C,\S)$ is thereby characterized by a universal property.

 Recall that, for a restriction category $\X$ with restriction operator $\overline{(-)}$, a morphism $f$ in $\X$ is called {\em total} if $\bar f=1$. As identity morphisms and composites of total morphisms are total, one obtains the category ${\sf Total}(\X)$, which has the same objects as $\X$. Any functor $F:\X\to\Y$ which preserves the restriction operations of the categories restricts to a functor $F:{\sf Total}(\X)\to{\sf Total}(\Y)$, and any (componentwise) total natural transformation $\alpha:F\to G$ of such functors keeps this role under the passage to total categories.

 Recall further that $i$ in $\X$ is a {\em restricted isomorphism} if, for some morphism $i^-$, one has $i^-\cdot i=\bar{i}$ and $i\cdot i^-=\overline{i^-}$; such $i^-$ is unique and called the {\em restricted inverse} of $i$ (also known as the {\em partial inverse} of $i$). We denote by ${\sf ReIso}(\X)$ 
 the class of restricted isomorphisms in $\X$ that are total. Remarkably, as shown in Proposition 3.3 of \cite{colackrest}, when $\X$ is a split restriction category, the pullback $j$ of $i\in {\sf ReIso(\X)}$ along any total morphism $f$ exists in ${\sf Total}(\X)$ and belongs to ${\sf ReIso}(\X)$ again: $j$ is part of the splitting of the restriction idempotent $\overline{\overline{i^-}\cdot f}=j\cdot r$ where $r\cdot j=1$, producing the pullback diagram
\begin{center}
$\xymatrix{\ar[r]^{i^-\cdot f\cdot j}\ar[d]_{j} & \ar[d]^{ i}\\
			\ar[r]_{ f} &}$
\end{center}

 As in \cite{colackrest}, but without any restriction to monomorphisms, we form the (very large) 2-category		
$$\mathfrak{StableCat}$$ of {\em stably structured categories}. Its objects are pairs $(\C,\mathcal S)$, where $\C$ is a category and $\S$ a class of morphisms satisfying the general hypotheses of Section 2; 
its morphisms $F:(\C,\mathcal S)\rightarrow (\D,\mathcal T)$ are functors $F:\C\rightarrow \D$
with $F(\mathcal S)\subseteq \mathcal T$ which
preserve pullbacks along $\mathcal S$-morphisms; 2-cells are natural transformations whose naturality squares involving $\S$-morphisms are pullback squares.

 $$\mathfrak{SplitRestCat}$$ denotes the (very large) 2-category of {\em split restriction categories}, with their restriction-preserving functors and total natural transformations. Then, as in \cite{colackrest}, we have the 2-functor
 \begin{center}
$\sf{Total}:\mathfrak{SplitRestCat}\longrightarrow \mathfrak{StableCat}\quad\quad\quad$
\end{center}

 \begin{center}
$\xymatrix{\X\ar@{|->}[rrr]\ar@<-1.5ex>[dd]_{F}^{\Rightarrow^\alpha}\ar@<1.8ex>[dd]^{G} &&&
({\sf Total}(\X),{\sf ReIso}(\X))\ar@<-2ex>[dd]^{\Rightarrow^{\alpha}}_{{\sf Total}(F)}\ar@<1.5ex>[dd]^{{\sf Total}(G)}\\
			&&&\\
			\Y\ar@{|->}[rrr]&&& ({\sf Total}(\Y),{\sf ReIso}(\Y))}$
\end{center}

 \noindent where ${\sf Total}(F)$ is the restriction of $F$, which we may write simply as $F$ again.

 Our aim is to show that there is a left adjoint that takes $(\C,\S)$ to ${\sf Par}(\C,\S)$. 
	It is straightforward to verify that every $F:(\C,\mathcal S)\rightarrow (\D,\mathcal T)$ in $\mathfrak{StableCat}$ gives us the well-defined restriction-preserving functor
\begin{center}
${\sf Par}(F):{\sf Par}(\C,\S)\longrightarrow {\sf Par}(\D,\T),\quad [s,f]_{{\rm z}^{\star}}\mapsto[Fs,Ff]_{{\rm z}^{\star}}.$
\end{center}
The resulting (ordinary) functor
${\sf Par}:\mathfrak{StableCat}\longrightarrow \mathfrak{SplitRestCat}$
is easily seen to be a 2-functor; it sends a 2-cell $\alpha:F\Rightarrow G$ to the total natural transformation $[1,\alpha]: {\sf Par}(F)\Rightarrow {\sf Par}(G)$ whose component at $A$ in $\C$ is defined by $[1,\alpha]_A=[1_{FA},\alpha_A]$. We claim that {\sf Par} is left adjoint to {\sf Total}:

 \begin{theorem}\label{2adjunction1} There is a 2-adjunction
	$${\sf Par}\dashv {\sf Total}:\mathfrak{SplitRestCat}\longrightarrow \mathfrak{StableCat}.$$
\end{theorem}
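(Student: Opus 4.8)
The plan is to establish the 2-adjunction by exhibiting the unit and verifying the universal property directly, rather than chasing the triangle identities abstractly. For a stably structured category $(\C,\S)$, I would first construct the unit $\eta_{(\C,\S)}:(\C,\S)\to({\sf Total}({\sf Par}(\C,\S)),{\sf ReIso}({\sf Par}(\C,\S)))$. The natural candidate is $\Phi_{{\rm z}^{\star}}:\C\to{\sf Par}(\C,\S)$, sending $f\mapsto[1,f]_{{\rm z}^{\star}}$. The first task is to check that this lands where it should: that $\Phi_{{\rm z}^{\star}}f$ is total for every $f$ (immediate, since $\overline{[1,f]}_{{\rm z}^{\star}}=[1,1]_{{\rm z}^{\star}}=1$ by the restriction formula of Theorem \ref{Par split rest}), and that $\Phi_{{\rm z}^{\star}}s\in{\sf ReIso}({\sf Par}(\C,\S))$ for every $s\in\S$. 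For the latter, the crucial identities $[s,1]\cdot[1,s]=1$ and $[1,s]\cdot[s,1]=[s,s]=\overline{[1,s]}$ from the proof of Theorem \ref{Par split rest} show that $\Psi_{{\rm z}^{\star}}s=[s,1]_{{\rm z}^{\star}}$ is the restricted inverse of $\Phi_{{\rm z}^{\star}}s$, so that $\Phi_{{\rm z}^{\star}}s$ is indeed a total restricted isomorphism. I must also confirm that $\Phi_{{\rm z}^{\star}}$ preserves pullbacks along $\S$-morphisms, which follows from the Beck-Chevalley property of the pair $(\Phi_{{\rm z}^{\star}},\Psi_{{\rm z}^{\star}})$ together with the description of pullbacks of restricted isomorphisms in ${\sf Total}(\X)$ recalled just before the theorem.

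The heart of the argument is the universal property of $\eta_{(\C,\S)}$: given any split restriction category $\X$ and any morphism $K:(\C,\S)\to({\sf Total}(\X),{\sf ReIso}(\X))$ in $\mathfrak{StableCat}$, I must produce a unique restriction-preserving functor $\bar K:{\sf Par}(\C,\S)\to\X$ with ${\sf Total}(\bar K)\circ\eta_{(\C,\S)}=K$. The forced definition is $\bar K[s,f]_{{\rm z}^{\star}}=Kf\cdot(Ks)^-$, where $(Ks)^-$ is the restricted inverse of the restricted isomorphism $Ks$. Here I would lean on Proposition \ref{BC}: the pair $(K,s\mapsto(Ks)^-)$ maps into $\X$, and I must check it satisfies the Beck-Chevalley property and is $\sim_{{\rm z}^{\star}}$-consistent. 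Beck-Chevalley follows because $K$ preserves $\S$-pullbacks and these are carried to the canonical pullbacks of restricted isomorphisms in $\X$; the restriction-category axioms (R4) in particular govern how $(Ks)^-$ interacts with composition. For $\sim_{{\rm z}^{\star}}$-consistency, I need that $(s,f)\leqslant^{\star}(\tilde s,\tilde f)$ via $x\in\S^{\star}$ implies $Kf\cdot(Ks)^-=K\tilde f\cdot(K\tilde s)^-$; this is where the precise definition of $\S^{\star}$ as pullbacks of conjugates $v^{\star}$ of $\S$-cospan morphisms becomes essential, since $Kx$ must be shown to be an isomorphism in $\X$ (not merely a restricted one) — the conjugate construction forces exactly the commuting pullback data making $Kx$ invertible.

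I expect the main obstacle to be precisely this $\sim_{{\rm z}^{\star}}$-consistency step: verifying that the defining class $\S^{\star}$ is mapped by $K$ into honest isomorphisms of $\X$. The point is that an arbitrary $\S$-morphism becomes only a restricted isomorphism under $K$, whereas elements of $\S^{\star}$ must become genuine isomorphisms, and reconciling these requires carefully unwinding the conjugate diagram defining $\S^{\star}$ and invoking the pullback-stability of ${\sf ReIso}(\X)$ in ${\sf Total}(\X)$. Once $\bar K$ is shown well-defined, its restriction-preservation ($\overline{\bar K[s,f]}=\bar K\overline{[s,f]}$) is a short computation using $\overline{[s,f]}=[s,s]$ and axiom (R3) in $\X$. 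Finally I would verify 2-naturality of $\eta$ (that it respects 2-cells $[1,\alpha]$) and observe that the counit, together with the triangle identities, follows formally; since the adjunction is between 2-categories, I would note that the bijection between morphisms $(\C,\S)\to{\sf Total}(\X)$ and $\bar K:{\sf Par}(\C,\S)\to\X$ is 2-natural, being induced by whiskering with $\eta$, which upgrades the ordinary adjunction to a 2-adjunction.
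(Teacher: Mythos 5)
Your construction of the unit coincides with the paper's: the paper also takes $\eta_{(\C,\S)}$ to be the corestriction of $f\mapsto[1,f]_{{\rm z}^{\star}}$ to ${\sf Total}({\sf Par}(\C,\S))$, checks that $[1,s]\in{\sf ReIso}({\sf Par}(\C,\S))$ for $s\in\S$, and that pullbacks along $\S$-morphisms are preserved. But from there the routes diverge, and yours has a genuine gap at exactly the step you flag as ``the main obstacle''. To get $\sim_{{\rm z}^{\star}}$-consistency of the pair $(K,\,s\mapsto (Ks)^-)$ you need, for every span morphism $x\in\S^{\star}$ with $\tilde s\cdot x=s$, $\tilde f\cdot x=f$, the identity $Kf\cdot(Ks)^-=K\tilde f\cdot(K\tilde s)^-$, and your proposed mechanism is to show that $Kx$ is an honest isomorphism in $\X$. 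That works for $x$ a conjugate $v^{\star}$ itself: $K$ preserves the two pullbacks defining $v^{\star}$ (they are along $\S$-morphisms), so $K(v^{\star})=(Kv)^{\star}$, and since ${\sf ReIso}(\X)$ is a class of monomorphisms, Proposition \ref{properties}(3) applied in $({\sf Total}(\X),{\sf ReIso}(\X))$ makes $(Kv)^{\star}$ invertible. It fails, however, for a general element of $\S^{\star}$, which by definition is a pullback of a conjugate along an \emph{arbitrary} $\C$-morphism: a 1-cell $K$ of $\mathfrak{StableCat}$ preserves only pullbacks along $\S$-morphisms, and $v^{\star}$ need not lie in $\S$ (Remark \ref{left cancel}(1) shows $\S^{\star}\not\subseteq\S$ in general). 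So $Kx$ cannot be identified with a pullback of the isomorphism $(Kv)^{\star}$; the $K$-image of that pullback square is merely a commuting square, which puts no constraint at all on $Kx$. The restriction to pullbacks along $\S$-morphisms in the definition of $\S^{\star}$ is only legitimate under weak left cancellation (Remark \ref{left cancel}(2)), which the theorem does not assume. Note also that Theorem \ref{partial}, which is the paper's only direct well-definedness argument of this kind, is proved under the hypothesis $\S^{\star}\subseteq\S$ precisely to dodge this issue.

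The paper's proof avoids the problem entirely by not verifying the universal property of the unit directly: it constructs the counit $\varepsilon_{\X}:{\sf Par}({\sf Total}(\X),{\sf ReIso}(\X))\to\X$, observing that on that side the structure class ${\sf ReIso}(\X)$ consists of monomorphisms, so that by Corollary \ref{gamma}(2) one has ${\sf Par}\cong{\sf Span}$ there, no quotient-consistency needs to be checked, and the functor $[s,f]\mapsto f\cdot s^-$ of Cockett--Lack (Theorem 3.4 of \cite{colackrest}) applies verbatim. It then verifies 2-naturality of $\eta$ and $\varepsilon$ and the two triangle identities by short computations ($f\mapsto[1,f]\mapsto f$ and $[s,f]\mapsto[[1,s],[1,f]]\mapsto[1,f]\cdot[s,1]=[s,f]$). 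The universal property you are trying to prove, including the well-definedness identity above, then \emph{follows} formally from the adjunction; attempting to prove it first, as you do, either requires a new idea for handling arbitrary pullbacks of conjugates or becomes circular. If you want to salvage your outline, the fix is to replace the direct consistency argument by the counit construction, i.e., to adopt the paper's reduction to the monomorphism case.
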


 \begin{proof}
	To construct the unit $\eta:{\rm Id}_\mathfrak{StableCat}\rightarrow \sf{Total}\circ\sf{Par}$
	at $(\C,\S)$ in $\mathfrak{StableCat}$, since in the notation of Section 4 
	the functor
	\begin{center}
		$\xymatrix{\C\ar[r]^{\Phi_{\rm a}\quad} & {\sf Sect}(\C,\S)\ar[r]^{\Gamma\quad} & {\sf Par}(\C,\S)},\;f\mapsto [1,f]=[1,f]_{{\rm z}^{\star}},$
	\end{center}
	has total values, we consider its restriction,
	$$\eta_{(\C,\S)}:\C \longrightarrow {\sf Total(Par}(\C,\S))$$
	and should show that $\eta_{(\C,\S)}$ lives indeed in
	$\mathfrak{StableCat}$. 
	For $s\in \mathcal S$, one clearly has $[1,s]\in{\sf ReIso(Par}(\C,\mathcal S))$. Furthermore, in the diagram 
		\begin{center}	
		$\xymatrix{\ar[r]^{f'}\ar[d]_{s'} & \ar[d]^{s}\\\ar[r]_{f} & }$\quad\quad\quad
		$\xymatrix{\ar[rr]^{[s,1][1,f][1,s']}\ar[d]_{[1,s']} && \ar[d]^{[1,s]}\\
			\ar[rr]_{[1,f]} && }$
	\end{center}
given the pullback square in $\C$ on the left one obtains the pullback square on the right,
living in the split restriction category ${\sf Par}(\C,\S)$. But, as one easily confirms, the top row of that pullback square equals $[1,f']$, so that the right diagram is in fact the $\eta_{(\C,\S)}$-image of the left diagram. Hence, $\eta_{(\C,\S)}$ preserves the relevant pullbacks.

For 1-cells $F, G:(\C,\S)\rightarrow (\mathcal D,\mathcal T)$ and a 2-cell $\alpha:F\Rightarrow G$, we need to show the commutativity of the following diagram, both at the 1-cell and 2-cell levels:
	\begin{center}
		$\xymatrix{\C\ar[rrr]^{\eta_{(\C,\S)}}\ar@<-2ex>[dd]_{F}^{\Rightarrow^\alpha}\ar@<2ex>[dd]^{G} &&& {\sf Total(Par}(\C,\S))\ar@<-3ex>[dd]^{\Rightarrow^{[1,\alpha]}}_{{\sf Total(Par}(F))}\ar@<3ex>[dd]^{{\sf Total(Par}(G))}\\
			&&&\\
			\mathcal D\ar[rrr]_{\eta_{(\mathcal D,\mathcal T)}}&&& {\sf Total(Par}(\mathcal D,\mathcal T))}$
	\end{center}
	
	\noindent Since for every morphism $f$ in $\C$ one has
		$$\eta_{(\mathcal D,\mathcal T)}(Ff)=[1,Ff]={\sf Par}(F)([1,f])
		={\sf Par}(F)(\eta_{(\C,\S)}(f)),$$
which shows the commuatativity at the 1-cell level:
		$$\eta_{(\mathcal D,\mathcal T)}\circ F={\sf Total(Par}(F))\circ\eta_{(\C,\S)}.$$
		At the 2-cell level, commutativity follows easily as well since, for all objects $A$ in $\C$,
		one has
		$$\eta_{(\mathcal D,\mathcal T)}( \alpha_A)=[1,\alpha_A]=[1,\alpha] _{\eta_{(\C,\S)}(A)}.$$

 	Next we define the counit $\varepsilon:\sf{Par}\circ\sf{Total}\rightarrow Id_{\mathfrak{SplitRestCat}}$.
	For a split restriction category $\X$, since $\sf{ReIso}(\X)$ is a collection of monomorphisms, one has $\sf{Par(Total}(\X),{\sf ReIso}(\X))\cong\sf{Span}(\sf{Total}(\X),\sf{ReIso}(\X))$ (see Corollary \ref{gamma}(2)), and one may define the functor $$\varepsilon_\X:\sf{Par(Total}(\X),\sf{ReIso}(\X))\rightarrow \X$$  as in  Theorem 3.4 of \cite{colackrest}, by	simply taking $[s,f]$ to $f\cdot s^-$. To confirm that $\varepsilon$ is 2-natural, we consider 1-cells $H,K:\X\to\Y$ of split restriction categories and a 2-cell $\beta:H\Rightarrow K$
and show the commutativity of the following diagram at both, the 1-cell and 2-cell levels:
	\begin{center}
		$\xymatrix{\sf{Par(Total}(\X),{\sf ReIso}(\X))\ar[rrr]^{\varepsilon_{\X}}\ar@<-3ex>[dd]_{{\sf Par(Total}(H))}^{\Rightarrow^{[1,\beta]}}\ar@<3ex>[dd]^{{\sf Par(Total}(K))} &&& \X\ar@<-2ex>[dd]_{H}^{\Rightarrow^{\beta}}\ar@<2ex>[dd]^{K}\\
			&&&\\
			\sf{Par(Total}(\Y),{\sf ReIso}(\Y))\ar[rrr]^{\varepsilon_{\Y}} &&& \Y}$
	\end{center}
	
	\noindent At the 1-cell level, for every morphism $[s,f]$ in ${\sf Par(Total}(\X),{\sf ReIso}(\X))$, we have
\begin{center}
\begin{tabular}{ll}	
	$\varepsilon_{\Y}({\sf Par}(H))([s,f]))$&$=\varepsilon_{\Y}([Hs,Hf])=Hf\cdot(Hs)^-$\\&$=Hf\cdot H(s^-)=H(f\cdot s^-)=H(\varepsilon_{\X}([s,f])).$
\end{tabular}
\end{center}
 At the 2-cell level, for every object $X$ in $\X$, we just note that
$$(\varepsilon_{\Y}[1,\beta])_X=\varepsilon_{\Y}([1,\beta_X])=\beta_X=\beta_{\varepsilon_{\X}(X)}=(\beta\varepsilon_{\X})_X.$$
	
	Finally, since the composite functor 
	$$\xymatrix{\sf{Total}(\X)\ar[rr]^{\hskip-15mm\eta_{\sf{Total}(\X)}}&& \sf{Total(Par(Total}(\X),{\sf ReIso}(\X)))\ar[rr]^{\hskip15mm\sf{Total}(\varepsilon_{\X})} && \sf{Total}(\X)}$$
	is described by $f\mapsto [1,f]\mapsto f$, the first triangular identity for the adjunction holds trivially.
For the second one, we see that the composite functor	
	$$\xymatrix{\sf{Par}(\C,\S)\ar[rr]^{\hskip-15mm\sf{Par}(\eta_{(\C,\S)})} &&
	\sf{Par(Total(Par}(\C,\S)),\I_{\C,\S})) \ar[rr]^{\hskip15mm\varepsilon_{\sf{Par}(\C,\S)}}	&& \sf{Par}(\C,\S)}$$
with $\I_{\C,\S}={\sf ReIso(Total(Par}(\C,\S)))$    
is described by	
	$$[s,f]\mapsto [[1,s],[1,f]]\mapsto [1,f][s,1]=[s,f],$$
	so that it maps identically as well.		
\end{proof}

We note that the counit $\varepsilon_\X$ at the split restriction category $\X$ as described in the above proof is actually an isomorphism (see  Theorem 3.4 of  \cite{colackrest}), so that $\mathfrak {SplitRestCat}$ {\em may be considered as a full reflective subcategory of} $\mathfrak{StableCat}$. Another important consequence of this fact is that, for every stably structured category $(\C,\S)$, the functor {\sf Par}$(\eta_{(\C,\S)})$ is an isomorphism. This proves:

\begin{corollary}
The partial map category {\sf Par}$(\C,\S)$ of a stably structured category $(\C,\S)$ may be isomorphically presented as the partial map category of $({\sf Total(Par}(\C,\S)),\I_{\C,\S})$ with $\I_{\C,\S}={\sf ReIso(Total(Par}(\C,\S)))$.
\end{corollary}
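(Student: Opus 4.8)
The plan is to obtain the statement as an immediate consequence of the triangular identities of the $2$-adjunction ${\sf Par}\dashv{\sf Total}$ of Theorem \ref{2adjunction1}, combined with the fact, recorded just above, that the counit $\varepsilon$ has invertible components. First I would make explicit the comparison functor in play. At a stably structured category $(\C,\S)$ the unit is a $1$-cell $\eta_{(\C,\S)}:(\C,\S)\to({\sf Total}({\sf Par}(\C,\S)),\I_{\C,\S})$ of $\mathfrak{StableCat}$, with $\I_{\C,\S}={\sf ReIso}({\sf Total}({\sf Par}(\C,\S)))$; applying the $2$-functor ${\sf Par}$ to it therefore yields a restriction-preserving functor
$${\sf Par}(\eta_{(\C,\S)}):{\sf Par}(\C,\S)\longrightarrow{\sf Par}\bigl({\sf Total}({\sf Par}(\C,\S)),\I_{\C,\S}\bigr),$$
and this is precisely the functor whose invertibility expresses the asserted isomorphic presentation.

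Next I would invoke the second triangular identity of the adjunction, $\varepsilon_{{\sf Par}(\C,\S)}\circ{\sf Par}(\eta_{(\C,\S)})=1_{{\sf Par}(\C,\S)}$, which was verified in the final paragraph of the proof of Theorem \ref{2adjunction1} by the computation $[s,f]\mapsto[[1,s],[1,f]]\mapsto[1,f][s,1]=[s,f]$. Since each component of $\varepsilon$ is an isomorphism (as observed immediately before this statement, following Theorem 3.4 of \cite{colackrest}), the functor $\varepsilon_{{\sf Par}(\C,\S)}$ is invertible; the triangular identity then identifies ${\sf Par}(\eta_{(\C,\S)})$ with its two-sided inverse and hence exhibits it as an isomorphism of categories. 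As its domain is ${\sf Par}(\C,\S)$ and its codomain is ${\sf Par}({\sf Total}({\sf Par}(\C,\S)),\I_{\C,\S})$, this is exactly the claimed identification.

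I do not expect a genuine obstacle here, as the argument is the standard observation that in a reflection -- an adjunction whose counit is pointwise invertible -- the left adjoint sends every unit component to an isomorphism. The only point requiring mild care is the bookkeeping of the object ${\sf Total}({\sf Par}(\C,\S))$ of $\mathfrak{StableCat}$, namely the identification of its distinguished class of morphisms with $\I_{\C,\S}={\sf ReIso}({\sf Total}({\sf Par}(\C,\S)))$, which is however immediate from the definition of the $2$-functor ${\sf Total}$; once this is fixed, domain and codomain match on the nose and no further verification is needed.
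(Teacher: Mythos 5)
Your proposal is correct and follows essentially the same route as the paper: the paper derives the corollary precisely from the observation that the counit $\varepsilon_{\X}$ is an isomorphism (via Theorem 3.4 of Cockett--Lack) together with the second triangular identity of the $2$-adjunction ${\sf Par}\dashv{\sf Total}$, which forces ${\sf Par}(\eta_{(\C,\S)})$ to be the (two-sided) inverse of $\varepsilon_{{\sf Par}(\C,\S)}$ and hence an isomorphism. You have merely spelled out explicitly the standard reflection argument that the paper leaves implicit in the sentence preceding the corollary.
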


The Corollary tells us that, even without assuming $\S$-morphisms to be monic, we may interpret ${\sf Par}(\C,\S)$ as living in the Cockett-Lack \cite{colackrest} context. The paper \cite{colackrest} also tells us that, quite trivially, the unit $\eta_{(\C,\S)}$ is an isomorphism whenever $\S$ {\em is} a class of monomorphisms in $\C$. We can also show easily that the mono condition is actually necessary to make the unit an isomorphism.

\begin{corollary}\label{equivalence1}
The restriction of the 2-adjunction of {\em Theorem \ref{2adjunction1}} to its fixed objects is the Cockett-Lack 2-equivalence of ${\mathfrak{StrictRestCat}}$ with the full subcategory ${\mathfrak{StableCat}}_{\rm mono}$ of $\mathfrak{StableCat}$, given by the stably structured categories $(\C,\S)$ with $\S$ a class of monomorphisms in $\C$.
\end{corollary}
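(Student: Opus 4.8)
The plan is to identify the fixed objects of the 2-adjunction ${\sf Par}\dashv{\sf Total}$ of Theorem \ref{2adjunction1} on each side and then invoke the general principle that a (2-)adjunction restricts to a (2-)equivalence between its fixed objects. Recall that the fixed objects are, on the $\mathfrak{SplitRestCat}$ side, those split restriction categories $\X$ at which the counit $\varepsilon_\X$ is an isomorphism, and on the $\mathfrak{StableCat}$ side, those stably structured categories $(\C,\S)$ at which the unit $\eta_{(\C,\S)}$ is an isomorphism. Hence the whole statement reduces to describing these two classes and checking that the two legs of the adjunction carry one into the other.

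On the $\mathfrak{SplitRestCat}$ side there is nothing to restrict: as recorded just before the Corollary (and as established in Theorem 3.4 of \cite{colackrest}), the counit $\varepsilon_\X$ is an isomorphism for \emph{every} split restriction category $\X$. Correspondingly, ${\sf Total}$ already takes values in $\mathfrak{StableCat}_{\rm mono}$, since ${\sf ReIso}(\X)$ is a class of monomorphisms in ${\sf Total}(\X)$. Thus every object of $\mathfrak{SplitRestCat}$ is fixed, and the right adjoint lands in the purported fixed subcategory on the other side.

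The substantive point is the characterization of the fixed objects on the $\mathfrak{StableCat}$ side as precisely $\mathfrak{StableCat}_{\rm mono}$; that is, the equivalence that $\eta_{(\C,\S)}$ is an isomorphism if and only if $\S$ is a class of monomorphisms in $\C$. The implication ``$\Leftarrow$'' is the (trivial) observation already cited from \cite{colackrest}: when $\S$ consists of monomorphisms, Corollary \ref{gamma}(2) identifies ${\sf Par}(\C,\S)$ with ${\sf Span}(\C,\S)$, whose total maps are exactly the classes $[1,f]$ and correspond bijectively to $\C$-morphisms $f$, with $\eta_{(\C,\S)}(\S)={\sf ReIso}({\sf Par}(\C,\S))$; hence $\eta_{(\C,\S)}$ is an isomorphism of stably structured categories. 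For the converse ``$\Rightarrow$'', which supplies the necessity of the mono hypothesis, I would exploit the identity $[s,1]\cdot[1,s]=1$ valid in ${\sf Par}(\C,\S)$ for all $s\in\S$ (recorded in the proof of Theorem \ref{Par split rest}). This identity exhibits $\eta_{(\C,\S)}(s)=[1,s]$ as a split monomorphism in ${\sf Par}(\C,\S)$, with left inverse $[s,1]$, for every $s\in\S$. Now assume $\eta_{(\C,\S)}$ is an isomorphism, hence in particular faithful, and let $s\in\S$ with $s\cdot a=s\cdot b$ for $\C$-morphisms $a,b$. Applying the functor $\eta_{(\C,\S)}$ gives $[1,s]\cdot[1,a]=[1,s]\cdot[1,b]$; left-cancelling the split monomorphism $[1,s]$ yields $[1,a]=[1,b]$, and faithfulness of $\eta_{(\C,\S)}$ then forces $a=b$. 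Thus every $s\in\S$ is monic, as required.

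With both classes of fixed objects determined, the restriction of ${\sf Par}\dashv{\sf Total}$ to them has invertible unit and counit throughout, so it is a 2-adjunction with invertible unit and counit, that is, a 2-equivalence between $\mathfrak{SplitRestCat}$ and $\mathfrak{StableCat}_{\rm mono}$; and by construction its two legs are exactly Cockett and Lack's ${\sf Total}$ and (via Corollary \ref{gamma}(2)) their partial-map functor, so it is the 2-equivalence of \cite{colackrest}. The only genuine obstacle is the converse implication of the displayed equivalence, and the key insight that dissolves it is recognizing $[1,s]$ as a split mono via $[s,1]\cdot[1,s]=1$; granting this, the necessity of the mono condition collapses to a single left-cancellation, and the rest is bookkeeping about the two sub-2-categories.
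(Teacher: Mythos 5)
Your proposal is correct and follows essentially the same route as the paper: the substantive converse direction rests on exactly the paper's key observation that $[s,1]\cdot[1,s]=1$ exhibits $\eta_{(\C,\S)}(s)=[1,s]$ as a section, hence a monomorphism, whose mono status is then reflected back to $s$ through the (faithful) isomorphism $\eta_{(\C,\S)}$ --- your explicit left-cancellation is just this argument spelled out. The surrounding bookkeeping (counit always invertible by Theorem 3.4 of Cockett--Lack, the easy direction cited from \cite{colackrest}, and the general principle that an adjunction restricts to an equivalence on its fixed objects) matches the paper's treatment as well.
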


\begin{proof}
Assuming $\eta_{(\C,\S)}$ to be an isomorphism, we must show that every $s\in\S$ is a monomorphism in $\C$, and for that it suffices to see that $\eta_{\C,\S)}$ maps $s$ to a monomorphism in {\sf Total(Par}$(\C,\S))$. But $[s,1]\cdot[1,s]=1$ shows that $\eta_{(\C,\S)}(s)=[1,s]$ is a section and, hence, a monomorphism in {\sf Par}$(\C,\S)$, and it trivially maintains this status in the subcategory {\sf Total(Par}$(\C,\S))$.
\end{proof}

 \section{The split range category {\sf RaPar}$(\C,\S)$ of $\S$-partial maps in $\C$}
Range categories, as introduced by Cockett, Guo and Hofstra in \cite{cgh}, enhance the notion of restriction category, in the sense that, in addition to the restriction operator $\overline{(-)}$, they carry also a so-called {\em range operator} $\widehat{(-)}$, which behaves somewhat dually to the restriction operator, as follows:
\begin{definition}{\em (See \cite{cgh}.)}
	 A {\em range structure} on a restriction category is an assignment
	 \begin{center}
	 $f:A\longrightarrow B$
	
	 $\overline{\widehat f:B\longrightarrow B}$
	 \end{center}
of a morphism $\widehat{f}$ to each morphism $f$, satisfying the following four conditions:
\begin{itemize}
	\item[\em{(RR1)}] $\overline{\widehat f}=\widehat f$ for all morphisms $f$;
	\item[\em{(RR2)}] $\widehat f\cdot f=f$ for all morphisms f;
	\item[\em{(RR3)}] $\widehat{\bar g\cdot f}=\bar g\cdot\widehat f$ whenever ${\rm{codom}}(f) = \rm{dom}(g)$; 	
	\item[\em{(RR4)}] $\widehat{g\cdot\widehat f}=\widehat{g\cdot f}$ whenever $\rm{codom}(f) = \rm{dom}(g)$.
	\end{itemize}
	A restriction category equipped with a range structure is a {\em range category}; it is a {\em split range category} when it is split as a restriction category.
\end{definition}
Our goal is now to find a sufficiently large quotient of $\sf{Par}(\C,\S)$ which is a range category. To this end, throughout the rest of the paper, we assume that the class
\\\\{\indent \em $\S$ belongs to a {\em relatively stable} orthogonal factorization system} $(\mathcal P,\mathcal S),$\\\\
so that, in addition to having $\S$ being stable under pullback in $\C$, one has $\mathcal P$ being stable under pullback along $\S$-morphisms.
For every morphism $f$, we let $$f=s_f\cdot p_f$$ denote a (tacitly chosen) $(\mathcal P,\S)$-factorization.
As for every orthogonal factorization system, the general hypotheses on $\S$ as listed in Section 2, now come for free, and $\S$ is also weakly left cancellable (as defined in Proposition \ref{properties}(2)). Consequently, for the pullback-stable class $\S^{\star}$ of Section 6, one has $\S^{\star}\subseteq\S$. We denote by $$\S^{\circ}$$ the least pullback-stable class $\T$ with $\S^{\star}\subseteq\T\subseteq \S$ satisfying the additional $(\P,\S)$-{\em stability property}
$$\forall p, q\in\P,\,x\in\S,\,y\in\T\;(x\cdot q=p\cdot y\Longrightarrow x\in\T).$$
(Since this property, along with pullback stability, is stable under taking intersections and is trivially satisfied for $\T=\S$, there is such a class $\S^{\circ}$.)

 Proceeding as indicated at the end of Section 6, by setting $\T=\S^{\circ}$ there, 
 we can now define the desired quotient of $\sf{Par}(\C,\S)$ 
 and consider the zig-zag relation $\sim_{{\rm z}_{\S^{\circ}}}$, for which we write just ${\rm z}^{\circ}$ when used as an index. It is the least equivalence relation containing the relation $\leqslant_{\S^{\circ}}$, which we abbreviate as $\leqslant^{\circ}$.

 \begin{definition}\label{rpar}
We call
	$${\sf RaPar}(\C,\S):={\sf Span}_{{\rm z}^{\circ}}(\C,\S)$$
	the $\S$-{\em partial map range category} of $\C$.
\end{definition}

 Before confirming that this category is indeed a range category, we note that, since $\mathcal S^{\star}\subseteq\mathcal S^{\circ}$, we have the functor $$\Lambda:{\sf Par}(\C,\S)\rightarrow {\sf RaPar}(\C,\S),\quad[s,f]_{{\rm z}^\star}\mapsto[s,f]_{{\rm z}^{\circ}}.$$
Its induced equivalence relation presents its codomain as a quotient of its domain. Furthermore, with
$\Gamma$ as defined before Corollary \ref{gamma}, we obtain the first assertion of the following statement.
\begin{corollary}\label{localization func RaRe}
\begin{itemize}
	\item[{\em (1)}]$\sf{RaPar}(\C,\mathcal S)\!\cong\! \sf{Sect}(\C,\mathcal S)[\Phi_{\rm a}(\mathcal S^\circ)^{-1}]$ with localization,
$$\Lambda\Gamma\!:\!\sf{Sect}(\C,\mathcal S)\rightarrow \sf{RaPar}(\C,\mathcal S).$$
	\item[{\em (2)}] If $\S$ is a class of monomorphisms, then
	$${\sf RaPar}(\C,\S)\cong{\sf Par}(\C,\S)\cong{\sf Span}(\C,\S).$$
	\end{itemize}
\end{corollary}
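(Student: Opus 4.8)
The plan is to prove Corollary \ref{localization func RaRe}(2) by tracing how the defining class $\S^{\circ}$ collapses when $\S$ consists of monomorphisms. Recall from Proposition \ref{properties}(3) that if $\S$ is a class of monomorphisms in $\C$, then $\S^{\star}$ is precisely the class of isomorphisms. The key observation I would establish is that, under the mono hypothesis, the class $\S^{\circ}$ also reduces to the isomorphisms of $\C$. If this holds, then the relation $\leqslant^{\circ}$ becomes the relation ``$(s,f) \leqslant^{\circ} (\tilde s, \tilde f)$ iff there is an isomorphism $x:(s,f)\to(\tilde s,\tilde f)$'', whose generated equivalence relation $\sim_{{\rm z}^{\circ}}$ is exactly vertical isomorphism of $\S$-spans. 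Consequently ${\sf RaPar}(\C,\S) = {\sf Span}_{{\rm z}^{\circ}}(\C,\S)$ coincides with ${\sf Span}(\C,\S)$, and the chain of isomorphisms is then supplied by Corollary \ref{gamma}(2).

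The crux is therefore to show $\S^{\circ} = \{\text{isomorphisms}\}$ when $\S \subseteq \text{Mono}(\C)$. Since $\S^{\circ}$ is defined as the \emph{least} pullback-stable class $\T$ with $\S^{\star} \subseteq \T \subseteq \S$ satisfying the $(\P,\S)$-stability property, I would verify that the class of all isomorphisms is itself a valid candidate for $\T$, which by minimality forces $\S^{\circ}$ to equal it. First, $\S^{\star} = \{\text{isos}\}$ by Proposition \ref{properties}(3), so the lower bound $\S^{\star} \subseteq \T$ is met with equality. The class of isomorphisms is trivially contained in $\S$ (as $\S$ contains all isomorphisms by our standing hypotheses) and is pullback-stable. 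The only substantive point is checking the $(\P,\S)$-stability property: given $p,q \in \P$, $x \in \S$, and $y$ an isomorphism with $x \cdot q = p \cdot y$, one must deduce that $x$ is an isomorphism.

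For that final implication I would argue as follows. From $x \cdot q = p \cdot y$ with $y$ invertible, we get $x \cdot (q \cdot y^{-1}) = p$, so $p$ factors through $x$. Now $p \in \P$ and $x \in \S$, and in an orthogonal factorization system $\P$ and $\S$ intersect exactly in the isomorphisms; the factorization $p = x \cdot (q\cdot y^{-1})$ exhibits $x$ as the $\S$-part of (a factorization of) the $\P$-morphism $p$. By the essential uniqueness of $(\P,\S)$-factorizations applied to $p$—whose canonical factorization is $p = 1 \cdot p$—the morphism $x$ must be an isomorphism. I expect this orthogonality argument, pinning down why $x$ is forced to be invertible rather than merely monic, to be the main obstacle: one must be careful that it is genuinely the interaction of $x \in \S$ with the $\P$-morphism $p$ (via the diagonal fill-in of the factorization system), and not just the mono assumption, that does the work. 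Once $\S^{\circ} = \{\text{isos}\}$ is secured, the remaining identifications are immediate from the results already proved, so I would present the corollary's proof as a short deduction citing Proposition \ref{properties}(3) and Corollary \ref{gamma}(2).
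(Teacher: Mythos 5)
Your proof of (2) is correct and takes essentially the same route as the paper: both arguments reduce to showing that $\S^{\circ}$ is exactly the class of isomorphisms when $\S\subseteq{\rm Mono}(\C)$, by verifying that the isomorphisms form a valid candidate class for the $(\P,\S)$-stability property --- the paper cites weak right cancellation for $\P$ together with $\P\cap\S$ being the isomorphisms, while you run the equivalent orthogonality argument via essential uniqueness of $(\P,\S)$-factorizations of $p=1\cdot p$. Part (1), which you leave untouched, is likewise not argued inside the paper's proof environment; it is obtained from the generalization of Theorem \ref{partial} at the end of Section 6, instantiated at $\T=\S^{\circ}$, together with the functor $\Lambda\Gamma$ introduced just before the corollary.
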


 \begin{proof} (2) For $\S$ a class of monomorphisms, $\S^{\star}$ is the class of isomorphisms in $\C$ (by Proposition \ref{properties}(3)), which trivially satisfies the additional property defining $\S^{\circ}$ (since $\P$, dually to $\S$, satisfies the weak {\em right} cancellation property, and $\P\cap\S$ is the class of isomorphisms). Consequently, also $\S^{\circ}$ is the class of isomorphisms in $\C$.
\end{proof}

As a quotient of the split restriction category $\sf{Par}(\C,\S)$, $\sf{RaPar}(\C,\S)$ is a split restriction category too, with its restriction structure given by $$\overline{[s,f]}_{\rm z^{\circ}}=\Lambda[s,s]_{\rm z^{\star}}=[s,s]_{\rm z^{\circ}}$$
for all $\S$-spans $(s,f)$. Now we show:

 \begin{theorem}
	$\sf{RaPar}(\C,\mathcal S)$ is a split range category, with its range structure defined by
	$$\widehat{[s,f]}_{\rm z^{\circ}}=[s_f,s_f]_{\rm z^{\circ}}$$
	for all $\S$-spans $(s,f)$, where $s_f$ belongs to the $(\mathcal P,\S)$-factorization of $f=s_f\cdot p_f$.
\end{theorem}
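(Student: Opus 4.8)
The plan is the following. Since $\sf{RaPar}(\C,\S)$ has already been recognized as a split restriction category, being a quotient of the split restriction category $\sf{Par}(\C,\S)$ via $\Lambda$, it remains only to check that the assignment $\widehat{[s,f]}_{{\rm z}^{\circ}}=[s_f,s_f]_{{\rm z}^{\circ}}$ is well defined on $\sim_{{\rm z}^{\circ}}$-classes and that it satisfies (RR1)--(RR4); the splitness of the range structure is then inherited from that of the underlying restriction structure. Throughout I write $[s,f]$ for $[s,f]_{{\rm z}^{\circ}}$ and use freely the identities $[1,f]=[1,s_f]\cdot[1,p_f]$ and $[s_f,s_f]=[1,s_f]\cdot[s_f,1]$ (both obtained by trivial pullbacks from $f=s_f\cdot p_f$), the decomposition $[s,f]=[1,f]\cdot[s,1]$, and the crucial identity $[s,1]\cdot[1,s]=1$ for $s\in\S$ (the image in $\sf{RaPar}(\C,\S)$ of the identity of {\em Lemma \ref{section}}).

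\emph{Well-definedness} is the heart of the matter, and it is precisely here that the defining property of $\S^{\circ}$ is used. Since $\sim_{{\rm z}^{\circ}}$ is generated by $\leqslant^{\circ}$, it suffices to show that $(s,f)\leqslant^{\circ}(t,g)$ forces $[s_f,s_f]=[s_g,s_g]$. Such an inequality provides $x\in\S^{\circ}$ with $t\cdot x=s$ and $g\cdot x=f$; factoring $g=s_g\cdot p_g$ and then $p_g\cdot x=s'\cdot p'$ with $s'\in\S$, $p'\in\P$, I obtain $f=s_g\cdot s'\cdot p'$, so that $s_f=s_g\cdot s'$ up to isomorphism by uniqueness of factorizations. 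Now the equation $s'\cdot p'=p_g\cdot x$ is exactly of the shape to which the $(\P,\S)$-stability property defining $\S^{\circ}$ applies (with $s'\in\S$, $p',p_g\in\P$, $x\in\S^{\circ}$), whence $s'\in\S^{\circ}$. As $s'$ is then a span morphism $(s_g\cdot s',s_g\cdot s')\to(s_g,s_g)$ lying in $\S^{\circ}$, we conclude $(s_f,s_f)\leqslant^{\circ}(s_g,s_g)$ and therefore $[s_f,s_f]=[s_g,s_g]$, as needed.

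The axioms (RR1) and (RR2) are then immediate. For (RR1), $\overline{\,\widehat{[s,f]}\,}=\overline{[s_f,s_f]}=[s_f,s_f]=\widehat{[s,f]}$, since the restriction of any $[a,b]$ equals $[a,a]$; in particular each $\widehat{[s,f]}$ is a restriction idempotent. For (RR2) I compute, using the factored forms and $[s_f,1]\cdot[1,s_f]=1$,
\[
\widehat{[s,f]}\cdot[s,f]=[1,s_f]\cdot\bigl([s_f,1]\cdot[1,s_f]\bigr)\cdot[1,p_f]\cdot[s,1]=[1,s_f]\cdot[1,p_f]\cdot[s,1]=[1,f]\cdot[s,1]=[s,f].
\]

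Finally, (RR3) and (RR4) both rest on a single pullback-pasting observation that exploits the relative stability of $(\P,\S)$. With $[s,f]:A\to B$ and $[t,g]:B\to C$, form the composite $[t,g]\cdot[s,f]=[s\cdot t',g\cdot f']$ via a pullback of $(f,t)$, and $[t,g]\cdot\widehat{[s,f]}=[t,g]\cdot[s_f,s_f]=[s_f\cdot\bar t,g\cdot\bar s]$ via a pullback of $(s_f,t)$. Using $f=s_f\cdot p_f$ and pasting, the pullback of $(f,t)$ factors through the pullback of $(s_f,t)$ by a comparison map $\rho$ which is a pullback of $p_f\in\P$ along the $\S$-morphism $\bar t$; hence $\rho\in\P$ by relative stability, and $f'=\bar s\cdot\rho$. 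Consequently, for any $\mu$ composable with $\bar s$, factoring $\mu\cdot\bar s=s_{\mu\bar s}\cdot p_{\mu\bar s}$ gives $\mu\cdot f'=s_{\mu\bar s}\cdot(p_{\mu\bar s}\cdot\rho)$ with $p_{\mu\bar s}\cdot\rho\in\P$ (as $\P$ is composition-closed), so that $s_{\mu\cdot f'}=s_{\mu\bar s}$ by uniqueness of factorizations. Taking $\mu=g$ yields $s_{g\cdot f'}=s_{g\bar s}$, hence $\widehat{g\cdot f}=[s_{g\bar s},s_{g\bar s}]=\widehat{g\cdot\widehat f}$, which is (RR4); taking instead $\bar g=\overline{[t,g]}=[t,t]$ (so $\mu=t$) and noting that $t\cdot\bar s=s_f\cdot\bar t\in\S$ equals its own $\S$-part, one gets $s_{t\cdot f'}=t\cdot\bar s$, hence $\widehat{\bar g\cdot[s,f]}=[t\bar s,t\bar s]=\overline{[t,g]}\cdot\widehat{[s,f]}$, which is (RR3). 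The main obstacle is the well-definedness step: it is exactly to make the $\S$-part $s_f$ stable along $\leqslant^{\circ}$ that $\S^{\circ}$ was enlarged beyond $\S^{\star}$ by the $(\P,\S)$-stability condition, while the verification of (RR3)--(RR4) is where the relative stability of $(\P,\S)$ enters essentially.
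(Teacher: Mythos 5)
Your proof is correct and follows essentially the same route as the paper's: well-definedness via the $(\P,\S)$-stability property defining $\S^{\circ}$ (your $s'$, obtained by factoring $p_g\cdot x$, is up to isomorphism the paper's orthogonality diagonal $d$), (RR2) via the section identity $[s_f,1]\cdot[1,s_f]=1$ from Lemma \ref{section}, and (RR3)--(RR4) via the two-stage pullback pasting in which the comparison map lies in $\P$ by relative stability, combined with uniqueness of $(\P,\S)$-factorizations. The only difference is organizational: you derive (RR3) and (RR4) uniformly from one observation with a generic post-composed morphism $\mu$, where the paper verifies (RR3) directly and then notes (RR4) follows from the same pullback diagram.
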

\begin{proof}
To show that $\widehat{(\rm{-})}$ is well-defined, we consider $\S$-spans $(s,f), (t,g)$ with $(s,f)\leqslant^{\circ}(t,g)$, so that there exists a morphism $x\in\mathcal S^{\circ}$ with $s=t\cdot x,\,f=g\cdot x$. We have the diagonal morphism $d$ with $s_g\cdot d=s_f$ and $d\cdot p_f=p_g\cdot x$. By weak left cancellation, the first identity gives $d\in\S$, so that the second identity then implies $d\in\S^{\circ}$. Since $s_g\cdot d=s_f$, so that $(s_f,s_f)\leqslant^{\circ}(s_g,s_g)$, well-definedness of $\widehat{(\rm{-} )}$ follows.

 To now check (RR1-RR4), we write $[s,f]$ for $[s,f]_{\rm z^{\circ}}$.

 (RR1) holds trivially since $[s,s]$ is a restriction idempotent, for all $s\in\S$.

 (RR2) For an $\S$-span $(s,f)$ with $(\mathcal P,\S)$-factorization $f=s_f\cdot p_f$ and $(u,v)$ the kernel pair of $s_f$, one has
$$(s_f,s_f)\cdot(s,f)=(s_f,s_f)\cdot(1,s_f)\cdot(s,p_f)=(u,s_f\cdot v)\cdot(s,p_f)=(1,s_f)\cdot(u,v)\cdot(s,p_f)$$
 \noindent in $\sf{Span}(\C,\S)$. Since $[u,v]_{\rm a}=1$ by Lemma \ref{section}, also $[u,v]=[u,v]_{\rm z^{\circ}}=1$, and one concludes
$$\widehat{[s,f]}\cdot[s,f]=[s_f,s_f]\cdot[s,f]=[1,s_f]\cdot[u,v]\cdot[s,p_f]=[1,s_f]\cdot[s,p_f]=[s,f].$$

(RR3) For composable $\S$-spans $(s,f), (t,g)$ we must show $\widehat{\overline{[t,g]}\cdot[s,f]}=\overline{[t,g]}\cdot\widehat{[s,f]}$, where $\overline{[t,g]}=[t,t]$.
But the consecutive pullback diagrams
\begin{center}
$\xymatrix{\ar[r]^{p_f'}\ar[d]_{t''} & \ar[r]^{s_f'}\ar[d]_{t'} & \ar[d]^{t}\\
\ar[r]_{p_f}\ar@/_1.5pc/[rr]_{f} & \ar[r]_{s_f} &}$
\end{center}

\noindent in $\C$ and the $\S$-stability of $\mathcal P$ show $$\widehat{[t,t]\cdot[s,f]}=[t\cdot s_f', t\cdot s_f']=[s_f\cdot t',t\cdot s_f']=[t,t]\cdot[s_f,s_f]=[t,t]\cdot\widehat{[s,f]}.$$

 (RR4) Using the same notation as in (RR3) we just observe that the $\mathcal S$-part of the $(\mathcal P,\S)$-factorization of $g\cdot s_f'$ serves also as the $\mathcal S$-part of the $(\mathcal P,\S)$-factorization of $g\cdot s'_f\cdot p'_f$. But this observation implies immediately the desired equality $\widehat{[t,g]\widehat{[s,f]}}=\widehat{[t,g][s,f]}$.
\end{proof}

 The following chart summarizes our constructions under the provisions of this section:
{\small
$$\xymatrix{%
\S\subseteq\C\quad\quad \ar[d] && \cr
{\sf Span}(\C,\S) \ar[d] && \cr
{\sf Span_{\rm a}(\C,\S)} \ar[d]\ar@{=}[r] & {\sf Sect}(\C,\S) \ar[d]\ar@{-}[] & \cr
{\sf Span_{\rm z^{\star}}(\C,\S)} \ar[d]\ar@{=}[r] & {\sf Par}(\C,\S) \ar[d]\ar@{=}[r]^{\hs{-6}} & {\sf Sect}(\C,\S)[\Phi_{\rm a}(\S^{\star})^{-1}] \ar[d]  \cr
{\sf Span_{\rm z^{\circ}}(\C,\S)} \ar[d]\ar@{=}[r] & {\sf RaPar}(\C,\S) \ar[d]\ar@{=}[r] & {\sf Sect}(\C,\S)[\Phi_{\rm a}(\S^{\circ})^{-1}] \ar[d]  \cr
{\sf Span_{\rm z}(\C,\S)} \ar@{=}[r] & {\sf Retr}(\C,\S)\ar@{=}[r] & {\sf Sect}(\C,\S)[\Phi_{\rm a}(\S)^{-1}]= \C[\S^{-1}]={\sf Span}_{\rm az}(\C,\S) \cr
}$$
}
\section{{\sf RaPar} as a left adjoint 2-functor}

In analogy to Section 7, and in extension of one of the principal results obtained in \cite{cgh}, we now provide a setting which presents
$(\C,\S)\mapsto {\sf RaPar(\C,S)}$
as the left adjoint to the formation of the category ${\sf Total}(\X)$ for every split range category $\X$. This means in particular that the category ${\sf RaPar}(\C,\S)$ will be characterized by a universal property.


For a morphism $f$ in a split range category $\X$ with range operator $\widehat{(-)}$ one first notes that, if $\widehat{f}=1$, also the pullback of $f$ along a restricted isomorphism $i$ (see the first diagram of Section 7) satisfies $\widehat{i^-\cdot f\cdot j}=1$, with $j$ splitting the restriction idempotent $\overline{\overline{i^-}\cdot f}$.
Hence, as Theorem 4.7 of \cite{cgh} shows, the class $${\sf RaSur}(\X):=\{f\,|\,\bar{f}=1,\,\widehat{f}=1\,\}$$ of {\em range surjections} in ${\sf Total}(\X)$ is stable under pullback along ${\sf ReIso}(\X)$; moreover, $({\sf RaSur}(\X),{\sf ReIso}(\X))$ is an orthogonal factorization system of the category ${\sf Total}(\X)$.

 As in \cite{cgh}, but without any restriction to monomorphisms, we form the (very large) 2-category		
$$\mathfrak{StableFact}$$ of {\em relatively stable factorization systems}. Its objects are triples $(\C,\mathcal P,\mathcal S)$, where $\C$ is a category equipped with an orthogonal factorization system $(\mathcal P,\mathcal S)$, such that $\C$ has pullbacks along $\S$-morphisms and $\P$ is stable under them; its morphisms $F:(\C,\mathcal P,\mathcal S)\rightarrow (\D,\mathcal Q,\mathcal T)$ are functors $F:\C\rightarrow \D$
with $F(\mathcal P)\subseteq \mathcal Q$ and $F(\mathcal S)\subseteq \mathcal T$ which
preserve pullbacks along $\mathcal S$-morphisms; 2-cells are natural transformations whose naturality squares involving $\S$-morphisms are pullback squares.

 $$\mathfrak{SplitRangeCat}$$ denotes the (very large) 2-category of {\em split range categories}, with their range-preserving restriction functors and total natural transformations. Then, as in \cite{cgh}, we have the 2-functor
 \begin{center}
$\sf{Total}:\mathfrak{SplitRangeCat}\longrightarrow \mathfrak{StableFact}$
\end{center}
\begin{center}
$\quad\quad\quad\quad\quad\quad\X\mapsto ({\sf Total}(\X),{\sf RaSur}(\X),{\sf ReIso}(\X)),$
\end{center}

\noindent which, on 1- and 2-cells, is defined as in Section 7.


We want to show that there is a left adjoint, that takes $(\C,\P,\S)$ to ${\sf RaPar}(\C,\S)$. (We write ${\sf RaPar}(\C,\S)$ for ${\sf RaPar}(\C,\P,\S)$ 
since $\P$ is determined by $\C$ and $\S$.) For that, we first show the following essential Lemma, using an extension of the notation of Section 8:

 \begin{lemma}\label{circlemma}
For every functor $F\!:(\C,\mathcal P,\mathcal S)\rightarrow (\D,\mathcal Q,\mathcal T)$ in $\mathfrak{StableFact}$ one has $$F(\S^{\circ})\subseteq (F(\S^{\star}))^{\circ}\subseteq \T^{\circ},$$
where $(F(\S^{\star}))^{\circ}$
is the least pullback-stable class $\mathcal V$ in $\D$ with $F(\S^{\star})\subseteq{\mathcal V}\subseteq \T$ satisfying the $(\Q,\T)$-stability property.
\end{lemma}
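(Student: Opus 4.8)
The plan is to prove the two inclusions separately, in each case exploiting that the relevant $(-)^{\circ}$-class is, by construction, the \emph{least} class with prescribed closure properties (and that the class $\T^{\circ}$ in $\D$ is available, since $(\D,\Q,\T)$ being an object of $\mathfrak{StableFact}$ makes the construction of Section 8 applicable in $\D$ as well).

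For the inclusion $(F(\S^{\star}))^{\circ}\subseteq\T^{\circ}$, I would simply check that $\T^{\circ}$ itself qualifies as one of the classes $\mc V$ that $(F(\S^{\star}))^{\circ}$ is the least of: by definition $\T^{\circ}$ is pullback-stable in $\D$, satisfies the $(\Q,\T)$-stability property, and is contained in $\T$. The only point needing argument is $F(\S^{\star})\subseteq\T^{\circ}$, and for this it suffices to prove the sharper statement $F(\S^{\star})\subseteq\T^{\star}$ (since $\T^{\star}\subseteq\T^{\circ}$). Minimality of $(F(\S^{\star}))^{\circ}$ then yields the inclusion.

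The key technical point, and the main obstacle, is therefore the claim $F(\S^{\star})\subseteq\T^{\star}$, i.e.\ that $F$ carries conjugates to conjugates. Given an $\S$-cospan morphism $v$ with $v\in\S$, the two spans ${\rm pb}\langle f,s\rangle$ and ${\rm pb}\langle g,t\rangle$ that define $v^{\star}$ are pullbacks of the $\S$-morphisms $s,t$; since $F$ preserves pullbacks along $\S$-morphisms, it carries them to the pullback spans associated with the cospan morphism $Fv$. As $Fs,Ft,Fv\in\T$, the latter is a $\T$-cospan morphism with $Fv\in\T$, so its conjugate $(Fv)^{\star}$ lies in $\T^{\star}$. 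Because $v^{\star}$ is the unique comparison morphism between the two pullbacks and $F$ preserves that universal property, $F(v^{\star})=(Fv)^{\star}\in\T^{\star}$. Invoking the weak left cancellation of $\S$ (automatic here, $\S$ coming from a factorization system), I may use the description of $\S^{\star}$ as the closure of such conjugates under pullback along $\S$-morphisms (Remark \ref{left cancel}(2)); since $F$ preserves these pullbacks and $\T^{\star}$ is itself pullback-stable in $\D$ (Proposition \ref{properties}(1)), the inclusion $F(\S^{\star})\subseteq\T^{\star}$ follows.

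For the inclusion $F(\S^{\circ})\subseteq(F(\S^{\star}))^{\circ}$, I would pass to the preimage class $\U:=\{\,x\in\S\mid Fx\in(F(\S^{\star}))^{\circ}\,\}$ and verify that $\U$ satisfies the defining conditions of $\S^{\circ}$, so that minimality of $\S^{\circ}$ forces $\S^{\circ}\subseteq\U$ — which is exactly the assertion. Concretely: $\U\subseteq\S$ by construction; $\S^{\star}\subseteq\U$ because $F(\S^{\star})\subseteq(F(\S^{\star}))^{\circ}$ by definition; $\U$ is pullback-stable because any pullback $x'$ of $x\in\U$ along a morphism $h$ is formed along the $\S$-morphism $x$, hence preserved by $F$, so that $Fx'$ is a pullback of $Fx\in(F(\S^{\star}))^{\circ}$ and thus again lies in $(F(\S^{\star}))^{\circ}$; and $\U$ satisfies the $(\P,\S)$-stability property, since applying $F$ to an identity $x\cdot q=p\cdot y$ with $p,q\in\P$, $x\in\S$, $y\in\U$ gives $Fx\cdot Fq=Fp\cdot Fy$ with $Fp,Fq\in\Q$, $Fx\in\T$, $Fy\in(F(\S^{\star}))^{\circ}$, whence $Fx\in(F(\S^{\star}))^{\circ}$ by the $(\Q,\T)$-stability of $(F(\S^{\star}))^{\circ}$. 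This establishes both inclusions; the only genuinely delicate ingredient remains the conjugate-preservation step $F(\S^{\star})\subseteq\T^{\star}$ of the preceding paragraph.
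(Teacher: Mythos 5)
Your proposal is correct and follows essentially the same route as the paper's proof: the first inclusion via $F(v^{\star})=(Fv)^{\star}$ (so $F(\S^{\star})\subseteq\T^{\star}$) together with minimality of $(F(\S^{\star}))^{\circ}$, and the second via the preimage class $F^{-1}(\mathcal V)\cap\S$, shown to satisfy the defining conditions of $\S^{\circ}$ so that minimality applies. The only differences are cosmetic: you instantiate directly at $\mathcal V=(F(\S^{\star}))^{\circ}$ where the paper quantifies over all admissible $\mathcal V$, and you are somewhat more explicit than the paper in invoking Remark \ref{left cancel}(2) to handle the pullback-closure of $\S^{\star}$ along $\S$-morphisms only.
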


 \begin{proof}
Since $F$ transforms pullbacks of $\S$-morphisms into pullbacks of $\T$-morphisms, for every morphism $v$ of $\S$-cospans one has (in the notation of Section 4) $F(v^{\star})=(Fv)^{\star}$. This implies $F(\S^{\star})\subseteq\T^{\star}$ and then $(F(\S^{\star}))^{\circ}\subseteq \T^{\circ}$.

 To prove the other inclusion claimed, for any class $\mathcal V$ as in the Lemma we form the class $\mathcal U = F^{-1}(\mathcal V)\cap\S$, which trivially satisfies $\S^{\star}\subseteq\mathcal U\subseteq\S$, as well as the $(\P,\S)$-stability property. Consequently, $\S^{\circ}\subseteq\mathcal U$, and then $F(\S^{\circ})\subseteq F(\mathcal U)\subseteq\mathcal V$. With this last inclusion holding for all $\mathcal V$,
$F(\S^{\circ})\subseteq (F(\S^{\star}))^{\circ}$ follows.
\end{proof}
	
As a consequence of Lemma \ref{circlemma}, every $F:(\C,\mathcal P,\mathcal S)\rightarrow (\D,\mathcal Q,\mathcal T)$ in $\mathfrak{StableFact}$ gives us the well-defined restriction- and range-preserving functor
\begin{center}
${\sf RaPar}(F):{\sf RaPar}(\C,\S)\longrightarrow {\sf RaPar}(\D,\T),\quad [s,f]_{{\rm z}^{\circ}}\mapsto[Fs,Ff]_{{\rm z}^{\circ}}.$
\end{center}
Defining it on 2-cells as {\sf Par} is defined in Section 7, we obtain the 2-functor
$${\sf RaPar}:\mathfrak{StableFactS}\longrightarrow \mathfrak{SplitRangeCat}$$
and can now claim:

 \begin{theorem}\label{2adjunction2} There is a 2-adjunction
	$${\sf RaPar}\dashv {\sf Total}:\mathfrak{SplitRangeCat}\longrightarrow \mathfrak{StableFact}.$$
\end{theorem}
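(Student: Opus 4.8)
The plan is to replicate the proof of Theorem \ref{2adjunction1} line by line, inserting at each stage the verification that the range layer---governed on the domain side by the factorization system $(\P,\S)$ and on the codomain side by the system $({\sf RaSur}(\X),{\sf ReIso}(\X))$ on ${\sf Total}(\X)$---is respected. First I would construct the unit $\eta:{\rm Id}_{\mathfrak{StableFact}}\to{\sf Total}\circ{\sf RaPar}$ at $(\C,\P,\S)$ as the restriction of $f\mapsto[1,f]_{{\rm z}^\circ}$; since $\overline{[1,f]}=[1,1]=1$, the values are total, exactly as in the {\sf Par} case. The task is then to show $\eta_{(\C,\P,\S)}$ lies in $\mathfrak{StableFact}$, meaning it preserves pullbacks along $\S$-morphisms and carries \emph{both} halves of the factorization system correctly. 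Pullback preservation and the inclusion $\eta_{(\C,\P,\S)}(\S)\subseteq{\sf ReIso}$ are verified precisely as for {\sf Par}. The genuinely new point is that $\eta_{(\C,\P,\S)}(\P)\subseteq{\sf RaSur}$: for $p\in\P$ the $(\P,\S)$-factorization of $p$ has trivial (iso) $\S$-part, so $\widehat{[1,p]}=[1,1]=1$ while $\overline{[1,p]}=1$, whence $[1,p]$ is a range surjection.

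Next I would define the counit $\varepsilon:{\sf RaPar}\circ{\sf Total}\to{\rm Id}_{\mathfrak{SplitRangeCat}}$. For a split range category $\X$, since ${\sf ReIso}(\X)$ consists of monomorphisms, Corollary \ref{localization func RaRe}(2) identifies ${\sf RaPar}({\sf Total}(\X),{\sf ReIso}(\X))$ with ${\sf Span}({\sf Total}(\X),{\sf ReIso}(\X))$, and I set $\varepsilon_\X([s,f])=f\cdot s^-$ using the restricted inverse $s^-$, exactly as for {\sf Par} and following \cite{cgh}. Its restriction-preservation is inherited verbatim from Theorem \ref{2adjunction1}; the extra work is range-preservation, i.e. $\varepsilon_\X(\widehat{[s,f]})=\widehat{\varepsilon_\X([s,f])}$. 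Here $\widehat{[s,f]}=[s_f,s_f]$ is computed from the $({\sf RaSur}(\X),{\sf ReIso}(\X))$-factorization $f=s_f\cdot p_f$ in ${\sf Total}(\X)$, so the left-hand side is $s_f\cdot(s_f)^-$, and the identity reduces to a computation with axioms (RR1--RR4) together with the defining property of that factorization system.

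For 2-functoriality, the well-definedness and range-preservation of ${\sf RaPar}(F)$ on a 1-cell $F$ is already granted by Lemma \ref{circlemma} (via $F(\S^\circ)\subseteq\T^\circ$, which makes $[s,f]_{{\rm z}^\circ}\mapsto[Fs,Ff]_{{\rm z}^\circ}$ respect the $\S^\circ$-zig-zag relation and send $[s_f,s_f]$ to $[s_{Ff},s_{Ff}]$ because $F$ preserves factorizations); on 2-cells it acts by $\alpha\mapsto[1,\alpha]$. The 2-naturality of $\eta$ and $\varepsilon$ at both the 1-cell and 2-cell levels then follows by the same componentwise computations as in Theorem \ref{2adjunction1}, with $[1,\alpha]$ and $[1,\beta]$ as the relevant total natural transformations. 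The triangular identities are formally identical as well: the composite ${\sf Total}(\X)\to{\sf Total}({\sf RaPar}({\sf Total}(\X),{\sf ReIso}(\X)))\to{\sf Total}(\X)$ is $f\mapsto[1,f]\mapsto f$, and the composite ${\sf RaPar}(\C,\S)\to{\sf RaPar}({\sf Total}({\sf RaPar}(\C,\S)),\ldots)\to{\sf RaPar}(\C,\S)$ is $[s,f]\mapsto[[1,s],[1,f]]\mapsto[1,f][s,1]=[s,f]$; both reduce to identities.

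The main obstacle is the range-preservation of the counit $\varepsilon_\X$, since everything on the restriction layer is already handled by Theorem \ref{2adjunction1} and the functoriality issues are absorbed into Lemma \ref{circlemma}. The delicate step is matching the \emph{abstract} range operator $\widehat{(-)}$ of $\X$, defined via the range axioms, with the \emph{concrete} range operator on ${\sf RaPar}({\sf Total}(\X),{\sf ReIso}(\X))$ built from the $({\sf RaSur}(\X),{\sf ReIso}(\X))$-factorization; this is exactly where the analysis in \cite{cgh} of the interplay between a range structure and its induced orthogonal factorization system on the category of total maps is indispensable.
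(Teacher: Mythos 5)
Your proposal is correct and takes essentially the same route as the paper: the unit is the restriction of $f\mapsto[1,f]_{{\rm z}^{\circ}}$ with the single new check that $\widehat{[1,p]}=1$ for $p\in\P$ (the paper states this without your justification via the iso $\S$-part of the factorization of $p$), the counit is $[s,f]\mapsto f\cdot s^{-}$ exactly as in Theorem 3.4 of \cite{colackrest}, functoriality of ${\sf RaPar}$ on 1-cells is delegated to Lemma \ref{circlemma}, and everything else is carried over verbatim from Theorem \ref{2adjunction1}. If anything, your explicit flagging of the counit's range-preservation as the delicate point (resting on the \cite{cgh} analysis of the factorization system $({\sf RaSur}(\X),{\sf ReIso}(\X))$ on ${\sf Total}(\X)$) is more careful than the paper, which subsumes it under ``all remaining verifications.''
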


 \begin{proof}
	The unit $\eta:{\rm Id}_\mathfrak{StableFact}\rightarrow \sf{Total}\circ\sf{RaPar}$
	at $(\C,\P,\S)$ in $\mathfrak{StableFact}$ is constructed as in Section 7. Indeed, since in the notation of Sections 4 and 8
	the functor
	\begin{center}
		$\xymatrix{\C\ar[r]^{\Phi_{\rm a}\quad} & {\sf Sect}(\C,\S)\ar[r]^{\Lambda\Gamma\quad} & {\sf RaPar}(\C,\S)},\;f\mapsto [1,f]=[1,f]_{{\rm z}^{\circ}},$
	\end{center}
	has total values, we may consider its restriction,
	$$\eta_{(\C,\P,\S)}:(\C,\P,\S)\longrightarrow {\sf Total(RaPar}(\C,\S)).$$
	To show that $\eta_{(\C,\P,\S)}$ lives in
	$\mathfrak{StableFact}$, beyond the proof of Theorem \ref{2adjunction1} we just have to note that, for $p\in \mathcal P$, $[1,p]$ is total and $\widehat{[1,p]}=1$, so that $[1,p]\in {\sf RaSur(RaPar}(\C,\mathcal S))$. 
Naturality of $\eta$ is established as in Theorem \ref{2adjunction1}.
	

 	To define the counit $\varepsilon:\sf{RaPar}\circ\sf{Total}\rightarrow Id_{\mathfrak{SplitRangeCat}}$, we may proceed as in Section 7 as well, Indeed,
	for a split range category $\X$, 
	we define the functor $$\varepsilon_\X:\sf{RaPar(Total}(\X),{\sf RaSur}(\X),{\sf ReIso}(\X))\rightarrow \X$$  as in  Theorem 3.4 of \cite{colackrest}, by	simply taking $[s,f]$ to $f\cdot s^-$. 
	
	All remaining verifications can proceed as in the proof of Theorem \ref{2adjunction1}.
\end{proof}

As a consequence of Theorem \ref{2adjunction2}, in analogy to the corresponding statements in Section 7, we can state 
that $\mathfrak {SplitRangeCat}$ {\em may be considered as a full reflective subcategory of} $\mathfrak{StableFact}$. Furthermore:

\begin{corollary}\label{equivalence2}
The restriction of the 2-adjunction of {\em Theorem \ref{2adjunction2}} to its fixed objects is the Cockett-Guo-Hofstra 2-equivalence of ${\mathfrak{StrictRangeCat}}$ with the full subcategory ${\mathfrak{StableFact}}_{\rm mono}$ of $\mathfrak{StableFact}$, given by categories $\C$ equipped with a relatively stable factorization system $(\P,\S)$, with $\S$ a class of monomorphisms in $\C$.
\end{corollary}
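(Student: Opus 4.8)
The plan is to mirror the reasoning of Corollary \ref{equivalence1}, replacing the purely restriction-theoretic argument with its range-theoretic refinement. By Theorem \ref{2adjunction2} we already have the 2-adjunction ${\sf RaPar}\dashv{\sf Total}$, and the remark preceding this Corollary tells us that the counit $\varepsilon_\X$ is an isomorphism for every split range category $\X$ (inherited from the split-restriction case of \cite{colackrest}, together with the range-compatible refinement of \cite{cgh}). Restricting a 2-adjunction to its fixed objects always yields a 2-equivalence between the full sub-2-categories on which the unit and counit, respectively, are isomorphisms; so the only genuine content to establish is the \emph{identification} of these two classes of fixed objects with $\mathfrak{StrictRangeCat}$ and $\mathfrak{StableFact}_{\rm mono}$. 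First I would note that, since $\varepsilon$ is always iso, every object of $\mathfrak{SplitRangeCat}$ is a counit-fixed object, so that side of the equivalence is the whole of $\mathfrak{SplitRangeCat}$; the passage to $\mathfrak{StrictRangeCat}$ is then exactly the normalization (splitting-completion being an equivalence) already invoked in \cite{cgh}, and I would simply cite it.

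The substantive step is to characterize the unit-fixed objects on the $\mathfrak{StableFact}$ side, i.e.\ to show that $\eta_{(\C,\P,\S)}$ is an isomorphism in $\mathfrak{StableFact}$ if and only if $\S$ is a class of monomorphisms. One direction is immediate from \cite{colackrest}: when $\S$ consists of monomorphisms, Corollary \ref{gamma}(2) and Corollary \ref{localization func RaRe}(2) give ${\sf RaPar}(\C,\S)\cong{\sf Span}(\C,\S)$, and the unit reduces to the classical partial-map unit, which is well known to be an isomorphism. For the converse I would argue exactly as in the proof of Corollary \ref{equivalence1}: the underlying functor of $\eta_{(\C,\P,\S)}$ is the restriction-theoretic unit composed with the inclusion ${\sf Par}(\C,\S)\hookrightarrow{\sf RaPar}(\C,\S)$, and the identity $[s,1]\cdot[1,s]=1$ shows $\eta_{(\C,\P,\S)}(s)=[1,s]$ is a section, hence a monomorphism in ${\sf Total}({\sf RaPar}(\C,\S))$; if $\eta$ is an isomorphism of categories this forces each $s\in\S$ to be monic in $\C$.

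Here the range structure contributes an extra compatibility condition that is absent in Section 7: a 1-cell of $\mathfrak{StableFact}$ must carry $\P$ into $\mathcal Q$ and preserve the factorization system, so for $\eta_{(\C,\P,\S)}$ to be an \emph{isomorphism of stably-structured-factorization objects} one must also check that it identifies $(\P,\S)$ with $({\sf RaSur},{\sf ReIso})$ of the image. I would verify this by recalling that $\eta$ sends $p\in\P$ to the range surjection $[1,p]$ (noted in the proof of Theorem \ref{2adjunction2}) and $s\in\S$ to the restricted isomorphism $[1,s]$, and that under the mono hypothesis these assignments become bijective on the relevant hom-classes precisely because $\S^{\circ}$ collapses to the isomorphisms (Corollary \ref{localization func RaRe}(2)); conversely, surjectivity of $\eta$ onto ${\sf ReIso}$ at the level of the factorization forces every $\S$-morphism to already be a section-like, hence monic, map. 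The main obstacle I anticipate is precisely this bookkeeping: ensuring that ``isomorphism in $\mathfrak{StableFact}$'' is unwound to give control over \emph{both} classes $\P$ and $\S$ simultaneously, rather than merely over the underlying categories, so that the mono condition emerges as genuinely necessary and not just sufficient. Once that identification is pinned down, the 2-equivalence of the restricted adjunction follows formally, and coincides with the Cockett–Guo–Hofstra 2-equivalence of \cite{cgh} by construction of $\varepsilon$.
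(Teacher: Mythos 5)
Your proposal is correct and matches the paper's (largely implicit) argument: the paper proves Corollary \ref{equivalence2} exactly ``in analogy to Section 7,'' i.e.\ the counit is always an isomorphism, and the unit is an isomorphism precisely when $\S$ consists of monomorphisms, with necessity following from the same section argument you give ($[s,1]\cdot[1,s]=1$ makes $\eta(s)=[1,s]$ a section, hence monic in ${\sf Total}({\sf RaPar}(\C,\S))$) and sufficiency plus the identification with the Cockett--Guo--Hofstra equivalence cited from \cite{cgh}. One cosmetic slip: $\Lambda:{\sf Par}(\C,\S)\to{\sf RaPar}(\C,\S)$ is a quotient (localization) functor, not an inclusion, but this does not affect your argument since the identity $[s,1]\cdot[1,s]=1$ passes to the quotient.
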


\section{Epilogue: The 2-category {\sf Span}$(\C,\S)$}
Following \cite{Benabou}, for $\C$ and $\S$ as in Section 2, we can set up the {\em bicategory} ${\sf Span}_{=}(\C,\S)$, with the same objects as those of $\C$; morphisms are spans (s,f) with $s\in\S$ (but with no isomorphic identification as in Section 2), and 2-cells $x: (s,f)\to(t,g)$ satisfy $x\cdot t =s,\,x\cdot g=f$; horizontal composition proceeds by (chosen) pullbacks, and vertical composition is as in $\C$. One has the pseudo-functors
$$\Phi_{=}=\Phi:\C\longrightarrow\; \sf{Span}_{=}(\C,\S)\;\longleftarrow\S^{\rm{op}}:\Psi=\Psi_{=}$$
$$(f:D\to B)\longmapsto (1_D,f)\quad\quad (s,1_D) \longleftarrow\!\shortmid (A\leftarrow D:s)$$
where $\Psi s$ is, up to isomorphism, determined by the adjunction $\Phi s\dashv\Psi s$, for all $s\in\S$. Under an obvious choice of pullbacks, the units and counits of these adjunctions are respectively given by the canonical 2-cells
$$\delta_s:(1_A,1_A)\to\Psi s\cdot\Phi s=(u,v)\quad{\rm{and}}\quad\varepsilon_s:\Phi s\cdot\Psi s=(s,s)\to (1_B,1_B),$$
for all $s:A\to B$ in $\S$, where $(u,v)$ denotes the kernel pair of $s$. As indicated in Theorem A.2 of \cite{Hermida} for the special case $\S=\C$, the pseudo-functor $\Phi$ is universal with the property of mapping $\S$-morphisms to {\em maps} (= spans that admit a right adjoint): any pseudo-functor $F:\C\to\D$ to a bicategory $D$ that sends $\S$-morphisms to maps and satisfies the (standard) Beck-Chevalley condition must factor as $H\Phi= F$, for a homomorphism $H:{\sf Span}_{=}(\C,\S)\to\D$ of bicategories that is unique up to isomorphism. Indeed, such homomorphism must preserve the adjunction $\Phi s\dashv\Psi s$, so that necessarily $$H(s,f)=H(\Phi f\cdot\Psi s)=H\Phi f\cdot H\Psi s=Ff\cdot Gs,$$ where $Gs:=H\Psi s$ is right adjoint to $Fs$.

From the bicategory ${\sf Span}_{=}(\C,\S)$ one obtains, as a quotient, the 2-category ${\sf Span}(\C,\S)={\sf Span}_{\cong}(\C,\S)$, whose horizontal ordinary category we have been considering throughout this paper, as follows: one declares the 2-cells $x:(s,f)\to(t,g),\,\tilde{x}:(\tilde{s},\tilde{f})\to(\tilde{t},\tilde{g})$ to be equivalent if there are isomorphisms $i,j$ in $\C$ with
$$\tilde{s}\cdot i= s,\, \tilde{f}\cdot i=f,\,\tilde{t}\cdot j=t,\,\tilde{g}\cdot j= g\quad{\rm{and}}\quad \tilde{x}\cdot i=j\cdot x;$$
the equivalence of the 2-cells $1_{(s,f)}$ and $1_{(\tilde{s},\tilde{f})}$ then means precisely that the $\S$-spans $(s,f)$ and $(\tilde{s},\tilde{f})$ are isomorphic, as defined in Section 2. The quotient categories ${\sf Retr}(\C,\S)$ and  ${\sf Sect}(\C,\S)$ may now respectively be seen as coming about by forcing the above counits $\varepsilon_s$ and units $\delta_s$ to become isomorphisms. Consequently, in 2-categorical terms, the characteristic property of the functors $\Phi_{\rm z}$ and $\Phi_{\rm a}$ of Sections 3 and 4 is that they turn $\S$-morphisms into {\em maps} that are retractions and, respectively, sections.

$$ $$

\noindent REFERENCES

\end{document}